\numberwithin{equation}{section}
\newtheorem{theorem}{Theorem}[section]
\newtheorem{lemma}[theorem]{Lemma}
\newtheorem{proposition}[theorem]{Proposition}
\newtheorem{corollary}[theorem]{Corollary}
\theoremstyle{definition}
\newtheorem{definition}[theorem]{Definition}
\newtheorem{example}[theorem]{Example}
\newtheorem{remark}[theorem]{Remark}
\numberwithin{equation}{section}
\newtheorem{theory}{Theorem}
\newcommand{\A}{\mathcal{A}}
\newcommand{\bb}{\mathbb{B}}
\newcommand{\C}{\mathbb{C}}
\newcommand{\ct}{\mathcal{T}}
\newcommand{\Z}{\mathbb{Z}}
\newcommand{\res}{\mathrm{restricted}}
\begin{document}
\title[Weighted theory of Toeplitz operators on the Fock spaces]{Weighted theory of Toeplitz operators on the Fock spaces}
\date{May 15, 2025.
%\newline \indent $^{*}$ Corresponding author
}
\author[J. Chen]{Jiale Chen}
\address{School of Mathematics and Statistics, Shaanxi Normal University, Xi'an 710119, China.}
\email{jialechen@snnu.edu.cn}

%\dedicatory{This paper is dedicated to Professor ABCD}
\thanks{%This work was supported by the Fundamental Research Funds for the Central Universities (No. GK202207018) of China.
}

\subjclass[2020]{32A50, 47B35, 32A37, 42B20}
\keywords{Toeplitz operator, weighted Fock space, Fock projection, $A_{p,r}$-condition.}

%\date{Received: xxxxxx; Revised: yyyyyy; Accepted: zzzzzz.
%\newline \indent $^{*}$ Corresponding author

\begin{abstract}
  \noindent We study the weighted compactness and boundedness of Toeplitz operators on the Fock spaces. Fix $\alpha>0$. Let $T_{\varphi}$ be the Toeplitz operator on the Fock space $F^2_{\alpha}$ over $\mathbb{C}^n$ with symbol $\varphi\in L^{\infty}$. For $1<p<\infty$ and any finite sum $T$ of finite products of Toeplitz operators $T_{\varphi}$'s, we show that $T$ is compact on the weighted Fock space $F^p_{\alpha,w}$ if and only if its Berezin transform vanishes at infinity, where $w$ is a restricted $A_p$-weight on $\mathbb{C}^n$. Concerning boundedness, for $1\leq p<\infty$, we characterize the $r$-doubling weights $w$ such that $T_{\varphi}$ is bounded on the weighted spaces $L^p_{\alpha,w}$ via a $\varphi$-adapted $A_p$-type condition. Our method also establishes a two weight inequality for the Fock projections in the case of $r$-doubling weights. Moreover, we characterize the corresponding weighted compactness of Bergman--Toeplitz operators, which answers a question raised by Stockdale and Wagner [Math. Z. 305 (2023), no. 1, Paper No. 10].
\end{abstract}
\maketitle

%%%%%%%%%%%%%%%%%%%%%%%%%%%%%%%%%%%%%%%%%%%%%%%%%%%%%%%%%%%%%%%%%
%%%%%%%%%%%%%%%%%%%%%%%%%%%%%%%%%%%%%%%%%%%%%%%%%%%%%%%%%%%%%%%%%

\section{Introduction}
\allowdisplaybreaks[4]
%\bigskip
The purpose of this paper is to investigate the weighted properties of Toeplitz operators on the Fock spaces over $\C^n$. We first recall some basic notions.  A nonnegative function $w$ on $\C^n$ is said to be a weight if it is locally integrable on $\C^n$. Given $\alpha>0$, $1\leq p<\infty$ and a weight $w$ on $\C^n$, the weighted space $L^p_{\alpha,w}$ consists of measurable functions $f$ on $\C^n$ such that
$$\|f\|^p_{L^p_{\alpha,w}}:=\int_{\C^n}|f(z)|^pe^{-\frac{p\alpha}{2}|z|^2}w(z)dv(z)<\infty,$$
where $dv$ is the Lebesgue measure on $\C^n$. Let $\mathcal{H}(\C^n)$ be the space of entire functions on $\C^n$. Then the weighted Fock space $F^p_{\alpha,w}$ is defined by
$$F^p_{\alpha,w}:=L^p_{\alpha,w}\cap\mathcal{H}(\C^n)$$
with the inherited norm. If $w\equiv\left(\frac{p\alpha}{2\pi}\right)^n$, then the above spaces are simply denoted by $L^p_{\alpha}$ and $F^p_{\alpha}$. We refer to \cite{JPR,Zh} for more information of the classical Fock spaces $F^p_{\alpha}$. Since $F^2_{\alpha}$ is a closed subspace of $L^2_{\alpha}$, there exists an orthogonal projection from $L^2_{\alpha}$ onto $F^2_{\alpha}$. This map is the so-called Fock projection and is given by
$$P_{\alpha}(f)(z):=\int_{\C^n}f(u)\overline{K_z(u)}d\lambda_{\alpha}(u),$$
where $K_z(u)=e^{\alpha\langle u,z\rangle}$ is the reproducing kernel of $F^2_{\alpha}$, and
$$d\lambda_{\alpha}(u):=\left(\frac{\alpha}{\pi}\right)^ne^{-\alpha|u|^2}dv(u)$$
is the Gaussian measure. It is well-known that for any $1\leq p<\infty$, $P_{\alpha}$ actually extends to a bounded projection from $L^p_{\alpha}$ onto $F^p_{\alpha}$; see \cite[Theorem 7.1]{JPR}.

Given a function $\varphi$ on $\C^n$, the associated Toeplitz operator $T_{\varphi}$ is formally defined by
$$T_{\varphi}f:=P_{\alpha}(\varphi f).$$
By the mapping properties of the Fock projection $P_{\alpha}$, it is clear that if $\varphi\in L^{\infty}:=L^{\infty}(\C^n)$, then $T_{\varphi}$ extents to a bounded operator from $L^p_{\alpha}$ into $F^p_{\alpha}$ for $1\leq p<\infty$. The properties of Toeplitz operators acting on classical Fock spaces have been investigated extensively; see \cite{BCK,BCI,BvSW,CIL,Fu,HL,IZ} and the references therein. A main theme in the study of Toeplitz operators is to characterize the compactness of operators from the Toeplitz algebra via the Berezin transform; see for instance \cite{AZ,En,JZ,MSW,Su,WX} for the Bergman space case and \cite{BI,Fu2,HLW,Is15,IMW,XZ} for the Fock space case. Recall that for a bounded linear operator $T$ on the Fock space $F^p_{\alpha}$, its Berezin transform is defined by
$$\widetilde{T}(z):=\langle Tk_z,k_z\rangle_{\alpha},\quad z\in\C^n,$$
where $k_z(u)=e^{\alpha\langle u,z\rangle-\frac{\alpha}{2}|z|^2}$ is the normalized reproducing kernel of $F^2_{\alpha}$, and the integral pairing $\langle\cdot,\cdot\rangle_{\alpha}$ is given by
$$\langle f,g\rangle_{\alpha}:=\int_{\C^n}f(z)\overline{g(z)}d\lambda_{\alpha}(z).$$
The following theorem, proved by Bauer and Isralowitz \cite[Theorem 1.1]{BI}, characterizes the compactness of operators in the Toeplitz algebra on $F^p_{\alpha}$ via the Berezin transform.

\begin{theory}[\cite{BI}]\label{bere}
{\it Let $\alpha>0$ and $1<p<\infty$. Suppose that $T$ is an operator in the norm closure of the algebra generated by Toeplitz operators with
$L^{\infty}$-symbols acting on $F^p_{\alpha}$. Then $T$ is compact on $F^p_{\alpha}$ if and only if $\lim_{|z|\to\infty}\widetilde{T}(z)=0$.}
\end{theory}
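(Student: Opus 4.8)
I would separate the two implications, the forward one being soft and the converse carrying all the content. Suppose first that $T$ is compact on $F^p_{\alpha}$. A direct Gaussian computation gives $\|k_z\|_{F^q_{\alpha}}\approx1$ uniformly in $z$ for every $1\le q<\infty$, while for each polynomial $g$ we have $\langle k_z,g\rangle_{\alpha}=e^{-\frac{\alpha}{2}|z|^2}\overline{g(z)}\to0$ as $|z|\to\infty$. Since polynomials are dense in $F^{p'}_{\alpha}\cong(F^p_{\alpha})^{*}$, where $p'$ is the conjugate exponent, and $\{k_z\}$ is norm-bounded, it follows that $k_z\to0$ weakly in $F^p_{\alpha}$ as $|z|\to\infty$. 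Compactness then forces $\|Tk_z\|_{F^p_{\alpha}}\to0$, and H\"older's inequality for the pairing $\langle\cdot,\cdot\rangle_{\alpha}$ gives $|\widetilde{T}(z)|\le\|Tk_z\|_{F^p_{\alpha}}\|k_z\|_{F^{p'}_{\alpha}}\to0$.

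For the converse I would set up two structural tools. Encode a bounded operator $S$ by its matrix kernel $K_S(z,w):=\langle Sk_w,k_z\rangle_{\alpha}$. One checks that a Toeplitz operator $T_{\varphi}$ with $\varphi\in L^{\infty}$ obeys the off-diagonal Gaussian bound $|K_{T_{\varphi}}(z,w)|\lesssim\|\varphi\|_{\infty}e^{-c|z-w|^2}$, so that $T_{\varphi}$ is \emph{weakly localized}: its kernel is uniformly integrable in each variable and its off-diagonal mass tends to $0$ uniformly. I would then verify that the weakly localized operators form an algebra stable under adjoints and that the estimates persist under norm limits, placing every finite sum of finite products of Toeplitz operators, hence every $T$ as in the statement, in the norm closure of this class. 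The second tool is covariance under the Weyl operators $W_z$, the unitaries on $F^2_{\alpha}$ with $W_zk_0=k_z$: they satisfy $W_zk_w=\tau\,k_{z+w}$ for a unimodular scalar $\tau$ and $W_z^{*}T_{\varphi}W_z=T_{\varphi(\cdot+z)}$. The first relation gives $\widetilde{T}(z+w)=\langle W_z^{*}TW_z\,k_w,k_w\rangle_{\alpha}$ for all $z,w$, and the second identifies the limits of $W_{z_n}^{*}TW_{z_n}$ with products of Toeplitz operators whose symbols are weak-$*$ limits of the translates $\varphi_j(\cdot+z_n)$.

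The argument then proceeds by limit operators. Fix a sequence $z_n\to\infty$; using weak-$*$ compactness of the translated symbols together with the weak-operator compactness supplied by weak localization, pass to a subsequence along which $W_{z_n}^{*}TW_{z_n}$ converges in the weak operator topology to a bounded operator $S$. By the covariance identity, $\widetilde{S}(w)=\lim_n\widetilde{T}(z_n+w)=0$ for every $w$, using the hypothesis $\widetilde{T}\to0$. Since the Berezin transform is injective on bounded operators---$\widetilde{S}\equiv0$ forces the (anti)holomorphic quantities $\langle SK_u,K_v\rangle_{\alpha}$ to vanish, whence $S=0$---every limit operator of $T$ is zero. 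It remains to convert this into compactness, for which I would invoke the weakly localized calculus to produce an essential-norm bound $\|T\|_{e}\lesssim\sup\{\|S\|:S\text{ a limit operator of }T\}$; granting it, the vanishing of all limit operators yields $\|T\|_{e}=0$, i.e. $T$ is compact.

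The crux, and the expected main obstacle, is this last essential-norm estimate, which I would obtain by discretizing the coherent-state decomposition of the identity over a lattice and splitting $T$ into near-diagonal and off-diagonal parts: the off-diagonal part is controlled directly by weak localization, and the near-diagonal part by the local pieces of $T$ at infinity, which are governed by the limit operators. Two points need care. First, the off-diagonal decay must survive the passage to products and norm limits with uniform constants, which is the technical heart of the weakly localized calculus. Second, for $p\neq2$ there is no Hilbert-space structure, so the adjoint and limit-operator manipulations and the discretized reproduction must be run on $F^p_{\alpha}$ directly; I would exploit that the kernel bounds are independent of $p$, use the $L^p$--$L^{p'}$ duality furnished by $\langle\cdot,\cdot\rangle_{\alpha}$, and interpolate to transfer the conclusion at $p=2$ to the whole range $1<p<\infty$.
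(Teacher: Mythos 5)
First, a point of orientation: the paper does not actually prove this statement --- it is quoted as Theorem~\ref{bere} from Bauer--Isralowitz \cite{BI} --- but it proves the weighted generalization, Theorem~\ref{main1}, and that is the natural benchmark. Your forward implication ($k_z\to0$ weakly, hence $\|Tk_z\|\to0$, hence $\widetilde{T}(z)\to0$ by H\"older) is exactly the paper's. For the converse you take a genuinely different route: a limit-operator argument, conjugating $T$ by Weyl operators, extracting WOT-limits $S$ of $W_{z_n}^{*}TW_{z_n}$, killing each $S$ via injectivity of the Berezin transform, and invoking an essential-norm bound $\|T\|_{e}\lesssim\sup_{S}\|S\|$ from the band-dominated calculus. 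The paper stays entirely inside the weak-localization formalism: it shows Toeplitz operators are weakly localized and that these form an algebra (Propositions~\ref{wl} and~\ref{alg}), imports \cite[Proposition 1.5]{Is15} to upgrade $\widetilde{T}(z)\to0$ to $\sup_{u\in B_r(z)}|\langle Tk_z,k_u\rangle_{\alpha}|\to0$ for each fixed $r$, splits $\int_{B_r(0)^c}|\langle Tk_z,k_u\rangle_{\alpha}|\,dv(u)$ into a far-from-$z$ piece (controlled by weak localization) and a near-$z$ piece (controlled by that decay), and concludes by a Riesz--Kolmogorov-type compactness criterion (Lemma~\ref{cpt-suff}); no Weyl covariance, limit operators, or essential-norm identity appear. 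Your route is heavier but, once the essential-norm estimate is established, it treats the full norm closure of the Toeplitz algebra directly, which is what Theorem~\ref{bere} asserts, whereas the paper's weighted theorem is stated only for finite sums of finite products. One claim in your sketch needs repair: weak localization with uniform constants does \emph{not} obviously persist under norm limits --- this is precisely why \cite{IMW} and Theorem~\ref{main1} restrict to finite sums of finite products --- so you should not place every $T$ in the norm closure into the weakly localized class; instead, establish the essential-norm bound on the dense subalgebra and use norm-continuity of $\|\cdot\|_{e}$, or work from the outset in the norm-closed band-dominated class. With that adjustment, and with the injectivity of the Berezin transform justified via sesqui-holomorphy of $(u,v)\mapsto\langle SK_u,K_v\rangle_{\alpha}$ through the $F^p_{\alpha}$--$F^{p'}_{\alpha}$ duality, your outline is a viable alternative proof.
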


Recently, the weighted boundedness of the Fock projections has drawn some attention; see \cite{CFP,CW24,CW24-1,Is14}. Let $Q$ denote a cube in $\C^n$, and write $l(Q)$ for its side length. As usual, $p'$ denotes the conjugate exponent of $p$, i.e. $1/p+1/p'=1$. For $1<p<\infty$, we say a weight $w$ belongs to the class $A^{\mathrm{restricted}}_p$ if for some (or any) fixed $r>0$,
$$[w]_{A_{p,r}}:=\sup_{Q:l(Q)=r}\left(\frac{1}{v(Q)}\int_{Q}wdv\right)
\left(\frac{1}{v(Q)}\int_{Q}w^{-\frac{p'}{p}}dv\right)^{\frac{p}{p'}}<\infty,$$
and $w$ is said to be in the class $A^{\res}_1$ if for some (or any) fixed $r>0$,
$$[w]_{A_{1,r}}:=\sup_{Q:l(Q)=r}\left(\frac{1}{v(Q)}\int_Qwdv\right)\|w^{-1}\|_{L^{\infty}(Q)}<\infty.$$
The following theorem, characterizing the boundedness of $P_{\alpha}$ on $L^p_{\alpha,w}$, was proved by Isralowitz \cite[Theorem 3.1]{Is14} for the case $1<p<\infty$ and by Cascante--F\`{a}brega--Pel\'{a}ez \cite[Proposition 2.7]{CFP} for the case $p=1$.

\begin{theory}[\cite{CFP,Is14}]\label{one-weight}
{\it Let $\alpha>0$, $1\leq p<\infty$, and let $w$ be a weight on $\C^n$. Then the Fock projection $P_{\alpha}$ is bounded on $L^p_{\alpha,w}$ if and only if $w\in A^{\res}_p$.}
\end{theory}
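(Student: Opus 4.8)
The plan is to prove the two implications separately, reducing the analytic core of each to an estimate for the \emph{positive} integral operator obtained by taking absolute values inside the Fock kernel. Writing $P_{\alpha}$ out explicitly and using the elementary identity $|e^{\alpha\langle z,u\rangle}|e^{-\frac{\alpha}{2}|z|^2-\frac{\alpha}{2}|u|^2}=e^{-\frac{\alpha}{2}|z-u|^2}$, one obtains the pointwise domination
\[
G(z):=|P_{\alpha}f(z)|e^{-\frac{\alpha}{2}|z|^2}\le c_n\int_{\C^n}e^{-\frac{\alpha}{2}|z-u|^2}F(u)\,dv(u)=:SF(z),\qquad F(u):=|f(u)|e^{-\frac{\alpha}{2}|u|^2},
\]
with $c_n=(\alpha/\pi)^n$. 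Since $\|f\|_{L^p_{\alpha,w}}^p=\int_{\C^n}F^pw\,dv$ and $\|P_{\alpha}f\|_{L^p_{\alpha,w}}^p=\int_{\C^n}G^pw\,dv$, the boundedness of $P_{\alpha}$ on $L^p_{\alpha,w}$ is controlled from above by the boundedness of the Gaussian convolution operator $S$ on the ordinary weighted space $L^p(w\,dv)$. The whole question thus becomes a weighted estimate for a convolution at a \emph{fixed} scale, precisely the scale matched by the restricted class $A^{\res}_p$.

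For sufficiency I would fix a tiling of $\C^n\cong\mathbb{R}^{2n}$ by a lattice of cubes $\{Q_j\}$ of side length $r$ with centers $c_j$. Because the Gaussian kernel is essentially constant on each product $Q_j\times Q_k$, for $z\in Q_j$ one has $SF(z)\lesssim r^{2n}\sum_k e^{-c|c_j-c_k|^2}\langle F\rangle_{Q_k}$, where $\langle F\rangle_{Q_k}$ is the average of $F$ over $Q_k$. The $A_{p,r}$ condition enters only through the elementary H\"older bound $\langle F\rangle_{Q_k}\le[w]_{A_{p,r}}^{1/p}\big(w(Q_k)^{-1}\int_{Q_k}F^pw\,dv\big)^{1/p}$, which turns each local average into a piece of the weighted norm of $F$. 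Substituting this into the discretized inequality reduces the claim $\|SF\|_{L^p(w)}\lesssim\|F\|_{L^p(w)}$ to a weighted discrete convolution estimate on the lattice whose kernel $e^{-c|c_j-c_k|^2}$ decays super-exponentially in the lattice distance; the case $p=1$ is entirely analogous, with the H\"older step replaced by the defining $A_{1,r}$ inequality.

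For necessity I would test $P_{\alpha}$ on a single, phase-corrected bump. Given a cube $Q$ of side $r$ centered at $a$, set $\sigma:=w^{-p'/p}$ and $f:=\chi_Q\,\sigma\,K_a/|K_a|$; the unimodular factor $K_a/|K_a|$ cancels the linear part of the oscillation of the kernel, so that for $z\in Q$ the only surviving $u$-dependent phase is $\alpha\,\mathrm{Im}\langle z-a,u-a\rangle$, which is $O(\alpha r^2)$. Choosing $r$ small enough that this phase stays below $\pi/3$ in absolute value (legitimate, since $A^{\res}_p$ is independent of the chosen $r$), the cosine stays bounded below and one gets the pointwise lower bound $|P_{\alpha}f(z)|e^{-\frac{\alpha}{2}|z|^2}\gtrsim\int_Q\sigma(u)e^{-\frac{\alpha}{2}|u|^2}\,dv(u)$ for all $z\in Q$. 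Raising to the $p$-th power and integrating against $e^{-\frac{p\alpha}{2}|z|^2}w$ over $Q$, computing $\|f\|_{L^p_{\alpha,w}}$ with the identity $\sigma^pw=\sigma$, and observing that the factors $e^{-\frac{p\alpha}{2}|a|^2}$ cancel between the two sides, the operator inequality $\|P_{\alpha}f\|_{L^p_{\alpha,w}}\le\|P_{\alpha}\|\,\|f\|_{L^p_{\alpha,w}}$ collapses exactly to $\big(\frac{1}{v(Q)}\int_Qw\,dv\big)\big(\frac{1}{v(Q)}\int_Q\sigma\,dv\big)^{p/p'}\lesssim\|P_{\alpha}\|^p$. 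Taking the supremum over all cubes of side $r$ yields $w\in A^{\res}_p$, while a routine truncation of $\sigma$ disposes of the a priori possibility that $\sigma$ fails to be locally integrable.

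The main obstacle is the summation in the sufficiency direction: the discrete kernel is a genuine convolution, but the weights $w(Q_j)$ are not, so Young's inequality does not apply directly. The point to establish is that $A^{\res}_p$ weights are locally doubling at scale $r$, so that the masses $w(Q_j)$ of neighboring cubes are comparable and the ratio $w(Q_j)/w(Q_k)$ grows at most geometrically in the lattice distance $|j-k|$, with constants depending only on $[w]_{A_{p,r}}$. This geometric growth is then swamped by the super-exponential Gaussian decay $e^{-c|c_j-c_k|^2}$, which is what forces the weighted convolution to converge; carrying this out cleanly, for instance through a weighted Schur test on the lattice, is the technical heart of the argument.
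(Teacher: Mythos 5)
The paper does not actually prove this theorem --- it is quoted from Isralowitz \cite{Is14} and Cascante--F\`abrega--Pel\'aez \cite{CFP} --- but the proof of the two-weight Theorem \ref{main3} in Section \ref{wbdd} runs on exactly the scheme you propose, so the comparison is fair. Your sufficiency half is the paper's argument: discretize over the lattice $r\Z^{2n}$, apply H\"older with the $A_{p,r}$ constant on each cube, and beat the geometric growth of $w(Q_r(\nu))/w(Q_r(\nu'))$ with the Gaussian decay $e^{-c|\nu-\nu'|^2}$. The doubling lemma you flag as the technical heart is true (it is Lemma \ref{ele} of the paper, with essentially the proof you indicate), so that half is sound.

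The necessity half has a genuine gap. The test function $f=\chi_Q\,\sigma\,K_a/|K_a|$ corrects only the \emph{phase} of the kernel, not its modulus, and the claimed cancellation of the factors $e^{-\frac{p\alpha}{2}|a|^2}$ between the two sides is false: on a cube $Q_r(a)$ of fixed side $r$, the Gaussian $e^{-\frac{p\alpha}{2}|u|^2}$ varies by a factor of order $e^{cp\alpha r|a|}$ (the cross term $2\,\mathrm{Re}\langle u-a,a\rangle$ in $|u|^2-|a|^2$ is of size $r|a|$, not $r^2$), so it is not comparable to $e^{-\frac{p\alpha}{2}|a|^2}$ uniformly in $a$. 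Concretely, your two sides are $w(Q)\bigl(\int_Q\sigma e^{-\frac{\alpha}{2}|u|^2}dv\bigr)^p$ and $\|P_{\alpha}\|^p\int_Q\sigma e^{-\frac{p\alpha}{2}|u|^2}dv$, and passing from this inequality to $\bigl(\int_Qw\,dv\bigr)\bigl(\int_Q\sigma\, dv\bigr)^{p-1}\lesssim\|P_{\alpha}\|^pv(Q)^p$ requires the reverse H\"older bound $\bigl(\int_Q\sigma e^{-\frac{\alpha}{2}|u|^2}dv\bigr)^p\gtrsim\bigl(\int_Q\sigma\, dv\bigr)^{p-1}\int_Q\sigma e^{-\frac{p\alpha}{2}|u|^2}dv$ with a constant independent of $a$; this fails (concentrate $\sigma$ where the Gaussian is smallest on $Q$), and the bound you actually extract degrades like $e^{c|a|}$, which does not yield finiteness of the supremum over all cubes. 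The fix is to put the full kernel, not just its phase, into the test function: with $f=\chi_{Q\cap E_m}\,\sigma\,k_a$ one has $|k_a(u)|e^{-\frac{\alpha}{2}|u|^2}=e^{-\frac{\alpha}{2}|u-a|^2}\asymp1$ on $Q_r(a)$ uniformly in $a$, so every Gaussian factor collapses to a constant depending only on $n,\alpha,r,p$. This is precisely the test function $f_{m,u,r}$ used in the paper's proof of Theorem \ref{main3} and in Isralowitz's original argument. A smaller point: your necessity argument only makes sense for $1<p<\infty$ (the exponent $-p'/p$ is meaningless at $p=1$); the paper treats $p=1$ separately by computing the adjoint of $P_{\alpha}$ against $L^{\infty}$.
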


The aim of this paper is twofold: we first generalize Theorem \ref{bere} to the weighted Fock spaces $F^p_{\alpha,w}$ induced by $w\in A^{\res}_p$, and then establish the version of Theorem \ref{one-weight} for Toeplitz operators.

We now state the main results of this paper. By Theorem \ref{one-weight}, the boundedness of Toeplitz operators with $L^{\infty}$-symbols on the weighted Fock spaces $F^p_{\alpha,w}$ induced by $w\in A^{\res}_p$ is out of question. Our first result characterizes the weighted compactness of Toeplitz operators via their Berezin transforms, which completely answers the Fock space analogue of \cite[Open Question 1.14]{SW}.

\begin{theorem}\label{main1}
Let $\alpha>0$, $1<p<\infty$, $w\in A^{\res}_p$, and let $T$ be a finite sum of finite products of Toeplitz operators with $L^{\infty}$-symbols. Then $T$ is compact on $F^p_{\alpha,w}$ if and only if $\lim_{|z|\to\infty}\widetilde{T}(z)=0$.
\end{theorem}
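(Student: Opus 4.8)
The plan is to treat the two implications separately: the ``only if'' direction is soft functional analysis, while the ``if'' direction carries all the analytic weight. Write $w'=w^{-p'/p}$ for the dual weight. I will use two facts that I expect to be available from the preliminaries on the restricted Muckenhoupt classes: first, $w\in A^{\res}_p$ forces $w'\in A^{\res}_{p'}$ (the usual $A_p$ duality, which survives for the restricted classes because it only concerns cubes of one fixed side length); and second, since $|k_z(u)|e^{-\frac{\alpha}{2}|u|^2}=e^{-\frac{\alpha}{2}|u-z|^2}$, the $A_{p,r}$ condition yields the uniform Gaussian estimate
\begin{equation*}
\sup_{z\in\C^n}\|k_z\|_{F^p_{\alpha,w}}\,\|k_z\|_{F^{p'}_{\alpha,w'}}
=\sup_{z\in\C^n}\left(\int_{\C^n}e^{-\frac{p\alpha}{2}|u-z|^2}w\,dv\right)^{\frac1p}
\left(\int_{\C^n}e^{-\frac{p'\alpha}{2}|u-z|^2}w'\,dv\right)^{\frac1{p'}}\le C.
\end{equation*}

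For necessity, suppose $T$ is compact on $F^p_{\alpha,w}$ and set $\kappa_z:=k_z/\|k_z\|_{F^p_{\alpha,w}}$. These are unit vectors, they decay pointwise on $\C^n$, and the family is norm-bounded; pairing against the reproducing kernels (whose linear span is dense in the dual $F^{p'}_{\alpha,w'}$ realized through $\langle\cdot,\cdot\rangle_\alpha$) shows $\kappa_z\to 0$ weakly in $F^p_{\alpha,w}$ as $|z|\to\infty$. Compactness then gives $\|T\kappa_z\|_{F^p_{\alpha,w}}\to 0$, and the weighted duality (Hölder with the factor $w^{1/p}w^{-1/p}$) yields
\begin{equation*}
|\widetilde{T}(z)|=\big|\langle Tk_z,k_z\rangle_{\alpha}\big|
=\|k_z\|_{F^p_{\alpha,w}}\,\big|\langle T\kappa_z,k_z\rangle_{\alpha}\big|
\le C\,\|k_z\|_{F^p_{\alpha,w}}\,\|k_z\|_{F^{p'}_{\alpha,w'}}\,\|T\kappa_z\|_{F^p_{\alpha,w}}.
\end{equation*}
The displayed kernel estimate bounds the product of the first two factors, so $|\widetilde{T}(z)|\le C\|T\kappa_z\|_{F^p_{\alpha,w}}\to 0$.

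For sufficiency I would avoid redoing the localization analysis of Bauer--Isralowitz and instead transfer their conclusion by interpolation, exploiting that $\widetilde{T}$ is built from the $F^2_\alpha$-kernels and is therefore independent of $p$ and $w$. Since the present $T$ lies in the norm closure of the Toeplitz algebra, Theorem~\ref{bere} already gives that $\lim_{|z|\to\infty}\widetilde{T}(z)=0$ implies $T$ is compact on the \emph{unweighted} space $F^p_\alpha$. I would then use the self-improvement (openness in the exponent) of the restricted class to pick $\delta>0$ with $w^{1+\delta}\in A^{\res}_p$; by Theorem~\ref{one-weight} the projection $P_\alpha$, hence the algebra element $T$, is bounded on $F^p_{\alpha,w^{1+\delta}}$. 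As $P_\alpha$ is a bounded projection on each of $L^p_\alpha$ and $L^p_{\alpha,w^{1+\delta}}$, the Fock spaces are complemented subspaces, and the complemented-subspace theorem together with Stein--Weiss interpolation at fixed exponent gives
\begin{equation*}
\big[F^p_\alpha,\,F^p_{\alpha,w^{1+\delta}}\big]_{\theta}=F^p_{\alpha,w},\qquad \theta=\tfrac{1}{1+\delta},
\end{equation*}
the intermediate weight being $(w^{1+\delta})^{\theta}=w$. A compactness-interpolation theorem for the complex method in this change-of-weight setting, where the exponent is constant and one endpoint operator is compact, then delivers compactness of $T$ on $F^p_{\alpha,w}$.

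The main obstacle is precisely this last transfer. On $F^2_\alpha$ one has the unitary Weyl operators and a clean limit-operator calculus, none of which is available on $F^p_{\alpha,w}$; the interpolation route sidesteps this but rests on a compactness-interpolation statement whose hypotheses must be verified with care, since interpolation of compactness for the complex method is delicate in general (though tractable when only the measure varies). Should that step prove fragile, the fallback is a direct essential-norm estimate: localize $T$ by a cutoff at radius $R$, show that the localized part is compact on $F^p_{\alpha,w}$ via the weighted boundedness of $P_\alpha$ and a Rellich-type argument, and bound the operator norm of the remainder on $F^p_{\alpha,w}$ by $\sup_{|z|>R}|\widetilde{T}(z)|$ using the two-weight and $r$-doubling estimates for the Fock projection developed elsewhere in the paper. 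This second path amounts to reproving Bauer--Isralowitz in the weighted category and is where the genuinely new weighted harmonic analysis would be required.
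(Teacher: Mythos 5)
Your necessity argument is correct and is essentially the paper's: $k_z/\|k_z\|_{F^p_{\alpha,w}}\to0$ weakly, and the uniform bound $\|k_z\|_{F^p_{\alpha,w}}\|k_z\|_{F^{p'}_{\alpha,w'}}\lesssim e^{\alpha|z|^2}$ (the paper's estimate \eqref{prod}, via Lemma \ref{test}) converts $\|Tk_z^{(p,w)}\|\to0$ into $\widetilde{T}(z)\to0$.

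The sufficiency direction has a genuine gap at the very first step of the interpolation route: the claimed ``openness in the exponent'' of $A^{\res}_p$ is false. Since $A^{\res}_p$ only constrains averages over cubes of a single fixed side length, there is no reverse H\"{o}lder inequality, and one can easily build $w\in A^{\res}_p$ with $w^{1+\delta}\notin A^{\res}_p$ for every $\delta>0$: inside a sequence of disjoint unit cubes $Q_1(\nu_k)$ let $w=\epsilon_k$ on a set of measure $\epsilon_k^{p'/p}$ and $w=1$ elsewhere, with $\epsilon_k\to0$; then each dip contributes a bounded amount to $\int_Q w^{-p'/p}$, so $[w]_{A_{p,1}}\lesssim1$, while $\int_{Q_1(\nu_k)}w^{-(1+\delta)p'/p}dv\gtrsim\epsilon_k^{-\delta p'/p}\to\infty$. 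The absence of exactly this kind of self-improvement is why Stockdale--Wagner needed the extra $\mathrm{RH}_r$ hypothesis in Theorem \ref{rh} and why their Open Question 1.14 was open, so the problem cannot be reduced to an endpoint result by interpolation of weights. (Even granting the endpoint spaces, interpolation of compactness for the complex method with complemented subspaces would still need justification, as you note.) Your fallback is also not yet an argument: the operator norm of the ``far'' part of $T$ on $F^p_{\alpha,w}$ cannot be bounded by $\sup_{|z|>R}|\widetilde{T}(z)|$, since the Berezin transform only records diagonal matrix entries $\langle Tk_z,k_z\rangle_\alpha$; one needs control of the off-diagonal entries $\langle Tk_z,k_u\rangle_\alpha$ as well. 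The paper supplies exactly this missing machinery: it shows every $T_\varphi$ is weakly localized on $F^p_{\alpha,w}$ (Proposition \ref{wl}), that weakly localized operators form an algebra (Proposition \ref{alg}), establishes a Riesz--Kolmogorov compactness criterion for $F^p_{\alpha,w}$ via continuous frames (Lemma \ref{RKtype}), and imports from \cite[Proposition 1.5]{Is15} the decay of $\sup_{u\in B_{r_0}(z)}|\langle Tk_z,k_u\rangle_\alpha|$ on the unweighted space; combining these yields \eqref{limit0} and hence compactness. Some version of this ``weighted localization'' analysis appears unavoidable.
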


The proof of Theorem \ref{main1} is based on a Riesz--Kolmogorov type characterization of precompact sets in $F^p_{\alpha,w}$ and the notion of weakly localized operators on $F^p_{\alpha,w}$, which is adapted from \cite{IMW}. We will show that each Toeplitz operator with $L^{\infty}$-symbol is weakly localized on $F^{p}_{\alpha,w}$, and all weakly localized operators on $F^p_{\alpha,w}$ form an algebra. This, combined with the Riesz--Kolmogorov type compactness criterion, will give the proof of Theorem \ref{main1}.

Our second result is a characterization of weighted boundedness of Toeplitz operators under a mild assumption on weights. For $r>0$, a weight $w$ on $\C^n$ is said to be $r$-doubling if there exists some constant $C=C(r)\geq1$ such that $w(Q_{2r}(z))\leq Cw(Q_{r}(z))$ for any $z\in\C^n$. Here and in the sequel, $Q_r(z)$ denotes the cube centered at $z\in\C^n$ with side length $r>0$, and $w(E)=\int_Ewdv$ for measurable sets $E$. It is easy to see that if $w$ is $r$-doubling, then for each $m\geq1$, there exists $C=C(m,r)\geq1$ such that $w(Q_{mr}(z))\leq Cw(Q_r(z))$ holds for any $z\in\C^n$. We characterize the $r$-doubling weights $w$ such that $T_{\varphi}$ is bounded on $L^p_{\alpha,w}$ via a $\varphi$-adapted $A_{p,r}$-condition.

\begin{theorem}\label{main2}
Let $\alpha>0$, $\varphi\in L^{\infty}$, and let $w$ be an $r$-doubling weight on $\C^n$ for some $r>0$.
\begin{enumerate}
	\item [(1)] For $1<p<\infty$, $T_{\varphi}$ is bounded on $L^p_{\alpha,w}$ if and only if
	$$\sup_{Q:l(Q)=r}\left(\frac{1}{v(Q)}\int_{Q}wdv\right)
	\left(\frac{1}{v(Q)}\int_{Q}|\varphi|^{p'}w^{-\frac{p'}{p}}dv\right)^{\frac{p}{p'}}<\infty.$$
	\item [(2)] $T_{\varphi}$ is bounded on $L^1_{\alpha,w}$ if and only if
	$$\sup_{Q:l(Q)=r}\left(\frac{1}{v(Q)}\int_{Q}wdv\right)\|\varphi w^{-1}\|_{L^{\infty}(Q)}<\infty.$$
\end{enumerate}
\end{theorem}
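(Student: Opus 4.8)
The plan is to pass from the Gaussian-weighted Fock setting to a plain integral operator on $L^p(w\,dv)$, and then run a discretization argument at the doubling scale $r$ for sufficiency and a phase-matched testing argument for necessity. Since $T_{\varphi}f=P_{\alpha}(\varphi f)$ and $\overline{K_z(u)}=e^{\alpha\langle z,u\rangle}$, I would set $F(z)=f(z)e^{-\frac{\alpha}{2}|z|^2}$, so that $\|f\|_{L^p_{\alpha,w}}=\|F\|_{L^p(w\,dv)}$ and
\[
T_{\varphi}f(z)\,e^{-\frac{\alpha}{2}|z|^2}=\Big(\tfrac{\alpha}{\pi}\Big)^n\int_{\C^n}\varphi(u)F(u)\kappa(z,u)\,dv(u)=:R_{\varphi}F(z),\qquad \kappa(z,u)=e^{\alpha\langle z,u\rangle-\frac{\alpha}{2}|z|^2-\frac{\alpha}{2}|u|^2}.
\]
Here $|\kappa(z,u)|=e^{-\frac{\alpha}{2}|z-u|^2}$, so $T_{\varphi}$ is bounded on $L^p_{\alpha,w}$ if and only if $R_{\varphi}$ is bounded on $L^p(w\,dv)$. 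This reformulation strips away the exponential weight and exhibits an integral operator with Gaussian off-diagonal decay, which is the natural home for a local $A_{p,r}$-type condition.

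For sufficiency of (1), I would fix a tiling $\{Q_j\}$ of $\C^n$ by cubes of side $r$ and dominate $|R_{\varphi}F|$ by the positive operator with kernel $e^{-\frac{\alpha}{2}|z-u|^2}$. Testing against $G\in L^{p'}(w)$ by duality and discretizing the double integral using $e^{-\frac{\alpha}{2}|z-u|^2}\lesssim e^{-c|j-k|^2}$ for $z\in Q_j,\ u\in Q_k$, a local Hölder estimate on each cube produces the factors $A_k=\big(\int_{Q_k}|\varphi|^{p'}w^{-p'/p}\big)^{1/p'}$, $a_k=\big(\int_{Q_k}|F|^pw\big)^{1/p}$, and $B_j=\big(\int_{Q_j}w\big)^{1/p}$, $b_j=\big(\int_{Q_j}|G|^{p'}w\big)^{1/p'}$. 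The hypothesis gives the diagonal bound $A_kB_k\lesssim r^{2n}$; writing $A_kB_j=(A_kB_k)(B_j/B_k)$ is where $r$-doubling is essential, since adjacent $r$-cubes have comparable $w$-mass and hence $B_j/B_k$ grows at most geometrically in $|j-k|$, which the Gaussian factor $e^{-c|j-k|^2}$ dominates. A Schur/Young estimate for the resulting rapidly decaying discrete kernel then yields $\sum_{j,k}e^{-c|j-k|^2}A_kB_j\,a_kb_j\lesssim\|a\|_{\ell^p}\|b\|_{\ell^{p'}}=\|F\|_{L^p(w)}\|G\|_{L^{p'}(w)}$.

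For necessity of (1) — which I expect to require no doubling — I would test against phase-matched bumps. Given $Q=Q_r(c)$ and a reference point $\zeta$, set $F_{\zeta}(u)=\overline{\varphi(u)}\,|\varphi(u)|^{p'-2}w(u)^{-p'/p}e^{-i\alpha\,\mathrm{Im}\langle\zeta,u\rangle}\chi_Q(u)$, so that $\varphi F_{\zeta}\,\kappa(\zeta,\cdot)\ge 0$ and $R_{\varphi}F_{\zeta}(\zeta)\gtrsim_r\int_Q|\varphi|^{p'}w^{-p'/p}=:I_{\varphi}$ with no cancellation, while $\|F_{\zeta}\|_{L^p(w)}^p=I_{\varphi}$ independently of $\zeta$. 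Choosing $\rho\sim\min(r,1/r)$ keeps the residual phase (after extracting the $u$-independent part of $\mathrm{Im}\langle z-\zeta,u\rangle$) below $\pi/4$ on $Q_{\rho}(\zeta)$, so the lower bound $|R_{\varphi}F_{\zeta}|\gtrsim_r I_{\varphi}$ persists on all of $Q_{\rho}(\zeta)$ and hence $\|T_{\varphi}\|^pI_{\varphi}\ge\|R_{\varphi}F_{\zeta}\|_{L^p(w)}^p\gtrsim_r I_{\varphi}^{\,p}\,w(Q_{\rho}(\zeta))$. Integrating $\zeta$ over $Q_{r+\rho}(c)$ and using $|Q_{\rho}(y)\cap Q_{r+\rho}(c)|=\rho^{2n}$ for $y\in Q$ converts $\int w(Q_{\rho}(\zeta))\,d\zeta$ into $\gtrsim\rho^{2n}w(Q)$, which gives $I_{\varphi}^{\,p/p'}w(Q)\lesssim_r\|T_{\varphi}\|^p$, exactly the stated condition (truncating $\varphi$ if $I_{\varphi}=\infty$).

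Part (2) runs on the same scheme with the $L^{\infty}$ endpoint: the local Hölder step becomes $\int_{Q_k}|\varphi||F|\le\|\varphi w^{-1}\|_{L^{\infty}(Q_k)}\int_{Q_k}|F|w$, the dual pairing uses $L^{\infty}(w)$, and for necessity $F_{\zeta}$ is concentrated where $|\varphi w^{-1}|$ nearly attains its $L^{\infty}(Q)$-norm. The step I expect to be the main obstacle is the oscillatory phase $e^{i\alpha\,\mathrm{Im}\langle z,u\rangle}$ of $\kappa$: over a cube of the \emph{fixed} side $r$ it can wind arbitrarily when $c$ is far from the origin, so a single test function yields a clean lower bound only near its reference point. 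The averaging-over-reference-points device is precisely what upgrades these pointwise (zero-phase) bounds to a lower bound for the full weighted norm against $w(Q)$, bypassing both the phase problem and any weighted sub-mean-value inequality (which fails for general $r$-doubling $w$); on the sufficiency side the companion difficulty is that only doubling at scale $r$ is available, forcing the matrix estimate to lean on Gaussian decay beating the geometric growth of $w$-masses across $r$-cubes.
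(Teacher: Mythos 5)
Your proposal is correct, and the sufficiency half is essentially the paper's argument: reduce to the integral operator with kernel modulus $e^{-\frac{\alpha}{2}|z-u|^2}$, tile by $r$-cubes, apply H\"older locally to extract the $A_{p,r}$-type quantity, and let the Gaussian decay $e^{-c|\nu-\nu'|^2}$ absorb the geometric growth $C^{|\nu-\nu'|}$ of the $w$-masses coming from $r$-doubling (the paper runs the Schur-type estimate directly on the $\ell^p$ sums rather than by duality, but this is cosmetic). The necessity half is where you genuinely diverge. The paper first localizes the operator: it introduces the rank-one pieces $T^{(u,r)}_{\varphi}f=\chi_{Q_r(u)}k_u\int_{Q_r(u)}\varphi f\overline{k_u}\,d\lambda_{\alpha}$, shows (Lemma 4.1, imported from \cite{CW24-1}) that each inherits the bound $\|T^{(u,r)}_{\varphi}\|\leq e^{n\alpha r^2/2}\|T_{\varphi}\|$, and then tests $T^{(u,r)}_{\varphi}$ with $f_{m,u,r}=\overline{\varphi}|\varphi|^{-\delta}w'k_u\chi_{Q_r(u)\cap E_m}$; since the kernel of $T^{(u,r)}_{\varphi}$ carries $\overline{k_u}$, the phase cancels exactly and the lower bound holds on all of $Q_r(u)$, producing $w(Q)$ in one step. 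You instead test $T_{\varphi}$ itself with the phase-matched bump $F_{\zeta}$, obtain a zero-cancellation lower bound only on the small cube $Q_{\rho}(\zeta)$ with $\rho\sim\min(r,1/(\alpha r))$ (your extraction of the $u$-independent part of $\mathrm{Im}\langle z-\zeta,u\rangle$ is exactly what is needed to keep $\rho$ independent of the location of $Q$), and recover $w(Q)$ by averaging the reference point $\zeta$ over the cube. The two devices play the same role --- indeed the proof of the paper's Lemma 4.1 is itself an averaging/translation argument --- but yours is self-contained and works directly on the primal side, at the cost of the extra phase bookkeeping; the paper's route also packages the $p=1$ necessity differently (via the explicit adjoint on $L^{\infty}$ under the pairing $\langle\cdot,\cdot\rangle_{L^2_{\alpha/2,w}}$ and the unimodular test function $e^{\frac{\alpha}{2}\langle\xi,z\rangle-\frac{\alpha}{2}\langle z,\xi\rangle}$), whereas your endpoint version stays parallel to the $1<p<\infty$ case. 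Do carry out the truncation by $E_m=\{|\varphi|^{p'}w^{-p'/p}\leq m\}$ as in the paper rather than truncating $\varphi$ alone, since $w^{-p'/p}$ need not be locally integrable; with that adjustment both the exponent count ($\|T_{\varphi}\|^p\gtrsim_r I_{\varphi}^{p-1}w(Q)$, $p-1=p/p'$) and the conclusion match the stated condition.
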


To prove the necessary parts of the above theorem, we will consider a class of rank one operators induced by the normalized reproducing kernels of $F^2_{\alpha}$. The sufficient parts are based on some elementary estimates of $r$-doubling weights.

Using the same method as in Theorem \ref{main2}, we can also establish a two weight inequality for the Fock projections. After the pioneering works of Sawyer \cite{Sa82,Sa88}, the two weight inequalities for classical operators, such as the Hilbert transform and the Bergman projection, have become a main subject in harmonic analysis and operator theory; see for instance \cite{APR,Br,FW,La14,LSSU,LSU} and the references therein. Generally, in order to obtain the two weight boundedness, one need to consider the Sawyer-type testing, which is usually hard to verify. We here characterize the two weight boundedness of the Fock projections only by some joint $A_p$-conditions, under the mild assumption that the weight of the target space is $r$-doubling. Let $r>0$ and $w,\sigma$ be two weights on $\C^n$. For $1<p<\infty$, the joint $A_{p,r}$-characteristic of $w$ and $\sigma$ is defined by
$$[w,\sigma]_{A_{p,r}}:=\sup_{Q:l(Q)=r}\left(\frac{1}{v(Q)}\int_{Q}wdv\right)
\left(\frac{1}{v(Q)}\int_{Q}\sigma^{-\frac{p'}{p}}dv\right)^{\frac{p}{p'}},$$
and the joint $A_{1,r}$-characteristic is defined by
$$[w,\sigma]_{A_{1,r}}:=\sup_{Q:l(Q)=r}\left(\frac{1}{v(Q)}\int_{Q}wdv\right)\|\sigma^{-1}\|_{L^{\infty}(Q)}.$$
We are now ready to state our two weight inequality for the Fock projections, which generalizes Theorem \ref{one-weight}.

\begin{theorem}\label{main3}
Let $\alpha>0$, $1\leq p<\infty$, and let $\sigma,w$ be two weights on $\C^n$. Suppose that $w$ is $r$-doubling for some $r>0$. Then $P_{\alpha}:L^p_{\alpha,\sigma}\to L^p_{\alpha,w}$ is bounded if and only if $[w,\sigma]_{A_{p,r}}<\infty$. Moreover,
$$\|P_{\alpha}\|_{L^p_{\alpha,\sigma}\to L^p_{\alpha,w}}\asymp[w,\sigma]^{1/p}_{A_{p,r}},$$
where the implicit constant depends on $p,n,\alpha,r$ and the $r$-doubling constant of $w$.
\end{theorem}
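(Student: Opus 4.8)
The plan is to prove the two directions separately and to exploit an asymmetry forced by the oscillation of the Fock kernel: for the sufficiency (upper) bound I can pass to the positive majorant of $P_{\alpha}$, while for the necessity I must keep the true kernel and defeat its phase by hand. The common starting point is the elementary identity $|K_z(u)|e^{-\frac{\alpha}{2}|z|^2-\frac{\alpha}{2}|u|^2}=e^{-\frac{\alpha}{2}|z-u|^2}$: setting $g(u)=|f(u)|e^{-\frac{\alpha}{2}|u|^2}$ one obtains
$$|P_{\alpha}f(z)|e^{-\frac{\alpha}{2}|z|^2}\leq\Big(\frac{\alpha}{\pi}\Big)^n\int_{\C^n}g(u)e^{-\frac{\alpha}{2}|z-u|^2}dv(u)=:\Big(\frac{\alpha}{\pi}\Big)^nSg(z).$$
Since $\|f\|_{L^p_{\alpha,\sigma}}=\|g\|_{L^p(\sigma dv)}$ and $\|P_{\alpha}f\|_{L^p_{\alpha,w}}\leq(\frac{\alpha}{\pi})^n\|Sg\|_{L^p(w dv)}$, the sufficiency reduces to the two weight bound $\|S\|_{L^p(\sigma dv)\to L^p(w dv)}\lesssim[w,\sigma]^{1/p}_{A_{p,r}}$ for the positive Gaussian convolution operator $S$.

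For this sufficiency bound I would discretize against the lattice of cubes $Q_j=Q_r(rj)$, $j\in\Z^{2n}$, which tile $\C^n$. For $z\in Q_i$ and $u\in Q_j$ one has $|z-u|\asymp r|i-j|$ up to an additive $O(r)$, so $e^{-\frac{\alpha}{2}|z-u|^2}\leq Ce^{-c_0|i-j|^2}=:CK_{ij}$ with $c_0=c_0(\alpha,r,n)>0$. Estimating $\int_{Q_j}g\leq a_js_j$ by H\"older, where $a_j=(\int_{Q_j}g^p\sigma)^{1/p}$ and $s_j=(\int_{Q_j}\sigma^{-p'/p})^{1/p'}$, the problem becomes the $\ell^p$-boundedness of the matrix $T_{ij}=w(Q_i)^{1/p}K_{ij}s_j$, which I would settle by the Schur test on the constant sequence. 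The joint $A_{p,r}$-characteristic controls the diagonal, $w(Q_i)^{1/p}s_i\leq[w,\sigma]^{1/p}_{A_{p,r}}r^{2n}$, while the off-diagonal terms require comparing $w(Q_i)$ with $w(Q_j)$: here the $r$-doubling of $w$ yields $w(Q_i)\leq C(1+|i-j|)^{d}w(Q_j)$, a polynomial in $|i-j|$. As this polynomial is absorbed by the super-exponential decay of $K_{ij}$, both the row and column sums of $T$ are $\lesssim[w,\sigma]^{1/p}_{A_{p,r}}r^{2n}$, giving $\|S\|\lesssim[w,\sigma]^{1/p}_{A_{p,r}}$. The case $p=1$ is identical with $s_j$ replaced by $\|\sigma^{-1}\|_{L^{\infty}(Q_j)}$ and the $\ell^1$ Schur test (bounded column sums).

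For the necessity I cannot use $S$, since boundedness of $P_{\alpha}$ does not dominate the positive operator pointwise. Instead, fixing $Q=Q_r(z_0)$, I would test on the reproducing-kernel-modulated cutoff $f(u)=\sigma^{-p'/p}(u)\chi_Q(u)e^{\frac{\alpha}{2}|u|^2}e^{-i\alpha\,\mathrm{Im}\langle z^{\ast},u\rangle}$, which satisfies $\|f\|_{L^p_{\alpha,\sigma}}^p=\int_Q\sigma^{-p'/p}dv$ and for which a direct computation gives
$$|P_{\alpha}f(z)|e^{-\frac{\alpha}{2}|z|^2}=\Big(\frac{\alpha}{\pi}\Big)^n\Big|\int_Q\sigma^{-p'/p}(u)e^{-\frac{\alpha}{2}|z-u|^2}e^{i\alpha\,\mathrm{Im}\langle z-z^{\ast},u-z^{\ast}\rangle}dv(u)\Big|.$$
The phase is harmless only on a cube $Q_{\delta}(z^{\ast})$ of side $\delta\sim(\alpha rn)^{-1}$ about the modulation center $z^{\ast}$, where it stays within a fixed sector and the integral is $\gtrsim\int_Q\sigma^{-p'/p}$. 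To make this region carry a definite share of the target mass I would select $z^{\ast}\in Q$ by an averaging (pigeonhole) argument so that $w(Q_{\delta}(z^{\ast}))\gtrsim(\delta/r)^{2n}w(Q)$. Integrating the resulting lower bound against $w$ over $Q_{\delta}(z^{\ast})$ and comparing with $\|P_{\alpha}\|^p\|f\|_{L^p_{\alpha,\sigma}}^p$ then gives $w(Q)(\int_Q\sigma^{-p'/p})^{p/p'}\lesssim\|P_{\alpha}\|^p v(Q)^p$, i.e. $[w,\sigma]^{1/p}_{A_{p,r}}\lesssim\|P_{\alpha}\|$; notably this direction needs no doubling. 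This is the rank-one-operator test of Theorem \ref{main2} in disguise, the rank-one operator being built from $k_{z^{\ast}}$.

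The main obstacle is the sufficiency Schur estimate, and precisely the quantitative interplay between $r$-doubling and the Gaussian tails: I must show that $r$-doubling of $w$ forces only polynomial growth of the ratio $w(Q_i)/w(Q_j)$ in the lattice distance $|i-j|$, so that it is dominated by $e^{-c_0|i-j|^2}$ and the Schur sums converge uniformly in $i$ and $j$. This is where the hypothesis that the \emph{target} weight $w$ (and not $\sigma$) be $r$-doubling is decisive, and where the sharp dependence of the implicit constant on the doubling constant of $w$ is produced. A secondary technical point lies in the necessity: the modulation must be centered exactly at the $w$-heavy point $z^{\ast}$ rather than at $z_0$, because $w$ may concentrate anywhere inside $Q$ and $r$-doubling gives no control at scales below $r$.
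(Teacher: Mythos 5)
Your sufficiency argument is essentially the paper's: tile $\C^n$ by the lattice cubes $Q_r(\nu)$, dominate the kernel by $e^{-c|\nu-\nu'|^2}$, apply H\"older on each cube, and let the Gaussian off-diagonal decay absorb the growth of $w(Q_r(\nu))/w(Q_r(\nu'))$ coming from $r$-doubling; the paper runs this as a double sum with Fubini rather than as a Schur test, but the two are the same computation. One intermediate claim of yours is false, however: $r$-doubling (doubling at the single scale $r$) does \emph{not} give polynomial growth $w(Q_i)\leq C(1+|i-j|)^{d}w(Q_j)$; it only gives the exponential bound $w(Q_r(\nu))\leq C^{|\nu-\nu'|}w(Q_r(\nu'))$ (the paper's Lemma~\ref{ele}(2)), as the example $w(z)=e^{|z|}$ shows. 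This does not sink the argument, since $C^{|i-j|}e^{-c_0|i-j|^2}$ is still summable, but the statement as written should be corrected.

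For the necessity you take a genuinely different route. The paper transplants the rank-one operators of Lemma~\ref{Tur}: it tests $P_{\alpha,u,r}f:=\chi_{Q_r(u)}k_u\int_{Q_r(u)}f\overline{k_u}\,d\lambda_{\alpha}$ on $f=k_u\,\sigma^{-p'/p}\chi_{Q_r(u)\cap E_m}$, so that the product $k_u\overline{k_u}=|k_u|^2$ makes the integrand positive and the phase cancels identically; no pigeonholing or small-cube localization is needed, and the full cube $Q_r(u)$ carries the $w$-mass. Your phase-modulated cutoff with a pigeonholed center $z^{\ast}$ also works and is a legitimate alternative, at the cost of the extra localization to $Q_{\delta}(z^{\ast})$ and a worse constant in $\alpha,n,r$. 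Two points you should still supply: (i) the truncation to $E_m=\{\sigma^{-p'/p}\leq m\}$ (or an equivalent device) is needed so that the test function lies in $L^p_{\alpha,\sigma}$ when $\int_Q\sigma^{-p'/p}dv=\infty$, with Fatou at the end; (ii) your necessity argument is written only for $1<p<\infty$ --- the exponent $-p'/p$ is meaningless at $p=1$, and the paper handles $p=1$ by a separate duality argument identifying $(L^1_{\alpha,\sigma})^*$ with $L^{\infty}$ and computing $P_{\alpha}^*$ explicitly. As stated, your proof does not cover the endpoint $p=1$ of the necessity.
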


The following example indicates that the $r$-doubling condition in the above theorem cannot be removed.

\begin{example}
Let $\alpha>0$, and let the weights $\sigma,w$ be defined by
$$\sigma(z)=e^{-\alpha|z|^2},\quad w(z)=e^{-4\alpha|z|^2},\quad z\in\C^n.$$
Then it is easy to verify that for any $r>0$,
$$[w,\sigma]_{A_{2,r}}=\sup_{Q:l(Q)=r}\left(\frac{1}{v(Q)}\int_{Q}e^{-4\alpha|z|^2}dv(z)\right)
\left(\frac{1}{v(Q)}\int_{Q}e^{\alpha|z|^2}dv(z)\right)<\infty.$$
However, by \cite[Theorem 1]{BEY}, $P_{\alpha}:L^2_{\alpha,\sigma}\to L^2_{\alpha,w}$ is unbounded.
\end{example}

The rest part of this paper is organized as follows. In Section \ref{pre}, we recall some preliminary results. Theorem \ref{main1} is proved is Section \ref{wcpt}. Section \ref{wbdd} is devoted to proving Theorems \ref{main2} and \ref{main3}. In Section \ref{Bergman}, we consider the weighted compactness of Toeplitz operators on the Bergman space over the unit ball $\bb_n$ of $\C^n$. For $1<p<\infty$ and a B\'{e}koll\`{e}--Bonami $B_p$-weight $\sigma$, we show that the Bergman--Toeplitz operator $T_{\varphi}$ with bounded symbol $\varphi$ is compact on the weighted Bergman space $\A^p_{\sigma}$ if and only if its Berezin transform vanishes at the boundary of $\bb_n$, which answers \cite[Open Question 1.14]{SW}; see Theorem \ref{main4}.

{\it Notation.} Throughout the paper, we write $A\lesssim B$ to denote that $A\leq CB$ for some inessential constant $C>0$, and if $A\lesssim B\lesssim A$, then we write $A\asymp B$. For a weight $w$ on $\C^n$, the weight $\widehat{w}$ is defined by $\widehat{w}(z)=w(Q_1(z))$, $z\in\C^n$, and the dual weight $w'$ is defined by $w'=w^{-p'/p}$. For $r>0$ and $z\in\C^n$, we use $B_r(z)$ to denote the Euclidean ball in $\C^n$ centered at $z$ with radius $r$.

\section{Preliminaries}\label{pre}

In this section, we collect some preliminary results that will be used in the sequel.

We begin from the following elementary lemma concerning weights, which was established in \cite[Lemmas 3.2 and 3.4]{Is14}. Here and in the sequel, we will treat $r\Z^{2n}$ as a subset of $\C^n$ in the canonical way. 

\begin{lemma}\label{ele}
Let $w$ be a weight on $\C^n$.
\begin{enumerate}
	\item [(1)] If $w\in A^{\res}_p$ for some $1<p<\infty$, then $w$ is $r$-doubling for each $r>0$.
	\item [(2)] If $w$ is $r$-doubling for some $r>0$, then there exists $C>0$ such that for any $\nu,\nu'\in r\Z^{2n}$,
	$$\frac{w(Q_r(\nu))}{w(Q_r(\nu'))}\leq C^{|\nu-\nu'|}.$$
\end{enumerate}
\end{lemma}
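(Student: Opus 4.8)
The plan is to handle the two parts separately, with the heart of (1) being a standard consequence of H\"{o}lder's inequality and the heart of (2) being a telescoping argument along a lattice path. For part (1), I would first record the subset comparison inequality that is built into the $A_p$-condition: for any cube $Q$ of side length $s>0$ and any measurable $E\subseteq Q$,
\begin{equation*}
\frac{w(E)}{w(Q)}\geq \frac{1}{[w]_{A_{p,s}}}\left(\frac{v(E)}{v(Q)}\right)^p.
\end{equation*}
This is obtained by writing $v(E)=\int_E w^{1/p}w^{-1/p}\,dv$ and applying H\"{o}lder's inequality with exponents $p$ and $p'$, which gives $v(E)^p\leq w(E)\left(\int_Q w'\,dv\right)^{p/p'}$; the factor $\left(\int_Q w'\,dv\right)^{p/p'}$ is then bounded by $[w]_{A_{p,s}}v(Q)^p/w(Q)$ using the definition of $[w]_{A_{p,s}}$ together with the identity $p/p'+1=p$.

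To deduce $r$-doubling I would apply this inequality with $s=2r$, $Q=Q_{2r}(z)$, and $E=Q_r(z)$. Since $v(Q_r(z))/v(Q_{2r}(z))=2^{-2n}$, the inequality rearranges to
\begin{equation*}
w(Q_{2r}(z))\leq 2^{2np}\,[w]_{A_{p,2r}}\,w(Q_r(z)),
\end{equation*}
which is exactly the $r$-doubling property. Here I use that membership in $A^{\res}_p$ is independent of the chosen scale, so that $[w]_{A_{p,2r}}<\infty$; this is precisely the content of the ``for some (or any) fixed $r>0$'' clause in the definition of $A^{\res}_p$.

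For part (2), the first step is to show that neighbouring lattice cubes have comparable weight. If $\nu,\nu'\in r\Z^{2n}$ are adjacent, i.e. $|\nu-\nu'|=r$, then a direct check of coordinates shows $Q_r(\nu)\subseteq Q_{3r}(\nu')$, so the extended doubling property stated in the text (namely $w(Q_{mr}(z))\leq C(m,r)w(Q_r(z))$) with $m=3$ yields $w(Q_r(\nu))\leq C_0\,w(Q_r(\nu'))$ for a constant $C_0=C(3,r)\geq1$ depending only on $r$ and the doubling constant. I would then join arbitrary $\nu,\nu'\in r\Z^{2n}$ by a lattice path $\nu=\mu_0,\mu_1,\dots,\mu_k=\nu'$ of $k=\|\nu-\nu'\|_{\ell^1}/r$ unit steps and telescope:
\begin{equation*}
\frac{w(Q_r(\nu))}{w(Q_r(\nu'))}=\prod_{i=1}^{k}\frac{w(Q_r(\mu_{i-1}))}{w(Q_r(\mu_i))}\leq C_0^{\,k}.
\end{equation*}
Finally, the elementary bound $\|\nu-\nu'\|_{\ell^1}\leq\sqrt{2n}\,|\nu-\nu'|$ together with $C_0\geq1$ converts this into the asserted estimate with $C=C_0^{\sqrt{2n}/r}$.

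I expect the only genuinely delicate point to be the scale change in part (1): the doubling inequality compares cubes of side $2r$ and $r$, so one cannot apply the $A_{p,r}$-characteristic directly and must instead invoke the scale-independence of the class to replace $r$ by $2r$. The remaining ingredients---the H\"{o}lder estimate, the coordinate-wise cube inclusion $Q_r(\nu)\subseteq Q_{3r}(\nu')$, and the $\ell^1$-to-Euclidean norm comparison---are routine.
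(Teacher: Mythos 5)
Your proof is correct. The paper itself gives no proof of this lemma (it only cites \cite{Is14}), and your two arguments --- the H\"{o}lder-based subset-comparison estimate $w(E)/w(Q)\geq [w]_{A_{p,s}}^{-1}(v(E)/v(Q))^p$ for part (1), and the cube inclusion $Q_r(\nu)\subseteq Q_{3r}(\nu')$ for adjacent lattice points followed by telescoping along an $\ell^1$-path for part (2) --- are exactly the standard route taken in that source, with all the small points (scale-independence of the $A^{\res}_p$ class, $C_0\geq 1$, the $\ell^1$-to-$\ell^2$ comparison) handled correctly.
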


For fixed $r>0$ and any $u,z\in\C^n$, there exist $\nu,\nu'\in r\Z^{2n}$ such that $u\in Q_r(\nu)$ and $z\in Q_r(\nu')$. Consequently, if $w\in A^{\res}_p$ for some $1<p<\infty$, then by Lemma \ref{ele}, there exist $C,C_1>0$ such that
\begin{align}\label{ele1}
w(Q_r(u))&\leq w(Q_{2r}(\nu))\leq C_1w(Q_r(\nu))\leq C_1C^{|\nu-\nu'|}w(Q_r(\nu'))\nonumber\\
&\leq C^2_1C^{|\nu-\nu'|}w(Q_r(z))\leq C^2_1C^{\sqrt{2n}r+|u-z|}w(Q_r(z)).
\end{align}

The following lemma can be deduced by \cite[Proposition 3.3 and Lemma 3.5]{CFP}.

\begin{lemma}\label{equal}
Let $0<\alpha<\infty$, $1<p<\infty$ and $w\in A^{\res}_p$. Then for any $f\in F^p_{\alpha,w}$ and $z\in\C^n$,
$$f(z)=P_{\alpha}f(z)=\langle f,K_z\rangle_{\alpha}.$$
\end{lemma}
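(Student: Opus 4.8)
The plan is to reduce the identity to the reproducing property of the Hilbert space $F^2_{\alpha}$ by a dilation approximation, after first checking that point evaluations are bounded functionals on $F^p_{\alpha,w}$ and that the pairing $\langle f,K_z\rangle_{\alpha}$ converges absolutely. I note at the outset that the second equality $P_{\alpha}f(z)=\langle f,K_z\rangle_{\alpha}$ is merely the definition of $P_{\alpha}$ applied pointwise, so the genuine content is the well-definedness of the integral together with the reproducing identity $f(z)=\langle f,K_z\rangle_{\alpha}$.

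First I would record the pointwise growth estimate underlying \cite[Proposition 3.3]{CFP}: for $f\in F^p_{\alpha,w}$,
$$|f(z)|e^{-\frac{\alpha}{2}|z|^2}\lesssim\Big(\int_{B_r(z)}w^{-\frac{p'}{p}}dv\Big)^{1/p'}\|f\|_{F^p_{\alpha,w}}.$$
This follows by applying the sub-mean value inequality to the subharmonic function $|g|$, where $g(u)=f(u)e^{-\alpha\langle u,z\rangle}$ is entire; using $\frac{\alpha}{2}|z|^2-\alpha\,\mathrm{Re}\langle u,z\rangle=\frac{\alpha}{2}|u-z|^2-\frac{\alpha}{2}|u|^2$ and absorbing the factor $e^{\frac{\alpha}{2}|u-z|^2}\leq e^{\frac{\alpha}{2}r^2}$ on $B_r(z)$, one gets $|f(z)|e^{-\frac{\alpha}{2}|z|^2}\lesssim\int_{B_r(z)}|f(u)|e^{-\frac{\alpha}{2}|u|^2}dv(u)$, and Hölder's inequality with exponents $p,p'$ finishes the estimate. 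Since $w\in A^{\res}_p$ forces $w'=w^{-p'/p}\in L^1_{\mathrm{loc}}$, point evaluation at each $z$ is indeed bounded on $F^p_{\alpha,w}$.

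Second, I would verify absolute convergence of $\langle f,K_z\rangle_{\alpha}$. Writing $|\overline{K_z(u)}|\,d\lambda_{\alpha}(u)\asymp e^{\alpha\,\mathrm{Re}\langle u,z\rangle-\alpha|u|^2}dv(u)$ and inserting the growth estimate for $|f(u)|$, the integrand is bounded by a constant multiple of
$$e^{\frac{\alpha}{2}|z|^2}\,e^{-\frac{\alpha}{2}|u-z|^2}\Big(\int_{B_r(u)}w'\,dv\Big)^{1/p'}\|f\|_{F^p_{\alpha,w}}.$$
Because $w\in A^{\res}_p$ implies $w'\in A^{\res}_{p'}$, the dual weight is $r$-doubling, so by Lemma \ref{ele} (the computation in \eqref{ele1} applied to $w'$) the factor $\big(\int_{B_r(u)}w'\,dv\big)^{1/p'}$ grows at most like $e^{c|u|}$. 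The Gaussian kernel $e^{-\frac{\alpha}{2}|u-z|^2}$ then dominates, giving an integrable majorant and hence absolute convergence.

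Finally, the dilation argument. For $0<t<1$ set $f_t(u)=f(tu)$; the growth estimate yields $|f_t(u)|^2e^{-\alpha|u|^2}\lesssim e^{(t^2-1)\alpha|u|^2+c|u|}$, which is integrable, so $f_t\in F^2_{\alpha}$ and the reproducing property of $F^2_{\alpha}$ gives $f_t(z)=\langle f_t,K_z\rangle_{\alpha}$. As $t\to1^-$ the left side tends to $f(z)$ by continuity of $f$; for the right side, the bounds above furnish a majorant uniform for $\tfrac12<t<1$, so dominated convergence gives $\langle f_t,K_z\rangle_{\alpha}\to\langle f,K_z\rangle_{\alpha}$, and the identity $f(z)=\langle f,K_z\rangle_{\alpha}=P_{\alpha}f(z)$ follows. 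The step requiring the most care is the control of the local dual averages $\int_{B_r(u)}w^{-p'/p}dv$: one must show they grow sub-Gaussianly in $|u|$ so that $e^{-\frac{\alpha}{2}|u-z|^2}$ dominates, both for absolute convergence and for producing the $t$-uniform majorant in the limit. This is precisely where the doubling consequence of the $A^{\res}_p$-condition (Lemma \ref{ele} and \eqref{ele1}, applied to $w'\in A^{\res}_{p'}$) is used.
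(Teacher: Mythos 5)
Your proof is correct. The paper gives no argument of its own for this lemma---it simply cites \cite[Proposition 3.3 and Lemma 3.5]{CFP}---and your derivation (the sub-mean-value pointwise estimate, exponential control of the local dual averages $\int_{B_r(u)}w^{-p'/p}dv$ via the doubling consequence of $w'\in A^{\res}_{p'}$, and then dilation plus dominated convergence to reduce to the reproducing property of $F^2_{\alpha}$) is exactly the standard route that those cited results encapsulate.
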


We also need the following norm estimate, which was established in \cite[Proposition 4.1]{CFP}.

\begin{lemma}\label{test}
Let $0<\alpha<\infty$, $1<p<\infty$ and $w\in A^{\res}_p$. Then for any $z\in\C^n$,
$$\|K_z\|_{F^p_{\alpha,w}}\asymp e^{\frac{\alpha}{2}|z|^2}w(Q_1(z))^{1/p}.$$
\end{lemma}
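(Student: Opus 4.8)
The plan is to reduce the weighted norm of $K_z$ to a Gaussian integral against $w$, and then to trap that integral between constant multiples of $w(Q_1(z))$. Since $|K_z(u)|=e^{\alpha\,\mathrm{Re}\langle u,z\rangle}$, I would first write
$$\|K_z\|_{F^p_{\alpha,w}}^p=\int_{\C^n}e^{p\alpha\,\mathrm{Re}\langle u,z\rangle-\frac{p\alpha}{2}|u|^2}w(u)\,dv(u),$$
and complete the square via $-\frac{p\alpha}{2}|u-z|^2=-\frac{p\alpha}{2}|u|^2+p\alpha\,\mathrm{Re}\langle u,z\rangle-\frac{p\alpha}{2}|z|^2$ to obtain
$$\|K_z\|_{F^p_{\alpha,w}}^p=e^{\frac{p\alpha}{2}|z|^2}\int_{\C^n}e^{-\frac{p\alpha}{2}|u-z|^2}w(u)\,dv(u).$$
Taking $p$-th roots, the asserted estimate is then equivalent to the two-sided bound
$$\int_{\C^n}e^{-\frac{p\alpha}{2}|u-z|^2}w(u)\,dv(u)\asymp w(Q_1(z)),$$
which I would establish uniformly in $z\in\C^n$.

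The lower bound is immediate: restricting the integral to $Q_1(z)$ and using $|u-z|^2\leq n/2$ there gives $e^{-\frac{p\alpha}{2}|u-z|^2}\geq e^{-p\alpha n/4}$, so the integral is at least $e^{-p\alpha n/4}w(Q_1(z))$. This direction uses only positivity of $w$.

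The upper bound is the crux, and it is where the $A^{\res}_p$ hypothesis enters. I would tile $\C^n$ by the unit cubes $\{Q_1(\nu):\nu\in\Z^{2n}\}$ and split the integral accordingly. Using $|z-\nu|^2\leq 2|z-u|^2+2|u-\nu|^2$ together with $|u-\nu|^2\leq n/2$ for $u\in Q_1(\nu)$, I get $e^{-\frac{p\alpha}{2}|u-z|^2}\lesssim e^{-\frac{p\alpha}{4}|z-\nu|^2}$ on $Q_1(\nu)$, whence
$$\int_{Q_1(\nu)}e^{-\frac{p\alpha}{2}|u-z|^2}w(u)\,dv(u)\lesssim e^{-\frac{p\alpha}{4}|z-\nu|^2}\,w(Q_1(\nu)).$$
By Lemma \ref{ele}(1), $w$ is $1$-doubling, so the comparison \eqref{ele1} furnishes a constant $C>1$ with $w(Q_1(\nu))\lesssim C^{|\nu-z|}w(Q_1(z))$. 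Summing over $\nu$ therefore reduces the upper bound to the uniform-in-$z$ estimate $\sum_{\nu\in\Z^{2n}}e^{-\frac{p\alpha}{4}|z-\nu|^2}C^{|\nu-z|}\lesssim 1$.

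The main obstacle is exactly this last summability, since the doubling estimate only controls $w(Q_1(\nu))$ up to a factor growing \emph{exponentially} in $|\nu-z|$; the argument succeeds only because the Gaussian decay is \emph{quadratic} in the exponent and hence dominates. I would make this precise by writing $C^{|\nu-z|}=e^{(\log C)|\nu-z|}$ and absorbing the linear term into the quadratic, $-\frac{p\alpha}{4}|z-\nu|^2+(\log C)|\nu-z|\leq-\frac{p\alpha}{8}|z-\nu|^2+M$ for a constant $M=M(p,\alpha,C)$. The remaining sum $\sum_{\nu}e^{-\frac{p\alpha}{8}|z-\nu|^2}$ is bounded uniformly in $z$ by comparison with the convergent integral $\int_{\C^n}e^{-c|u|^2}\,dv(u)$ (a standard lattice-versus-integral estimate that is insensitive to the center $z$). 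This yields $\int_{\C^n}e^{-\frac{p\alpha}{2}|u-z|^2}w(u)\,dv(u)\lesssim w(Q_1(z))$ and completes the proof.
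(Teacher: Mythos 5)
Your proof is correct. The paper does not actually prove this lemma --- it simply cites \cite[Proposition 4.1]{CFP} --- but your argument (completing the square to reduce to $\int_{\C^n}e^{-\frac{p\alpha}{2}|u-z|^2}w\,dv\asymp w(Q_1(z))$, getting the lower bound by restriction to $Q_1(z)$, and the upper bound by tiling with unit cubes and absorbing the at-most-exponential growth $C^{|\nu-z|}$ from the doubling estimate \eqref{ele1} into the Gaussian decay) is exactly the standard route and every step checks out.
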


Note that if $w\in A^{\res}_p$ for some $1<p<\infty$, then $w'\in A^{\res}_{p'}$. Moreover, by H\"{o}lder's inequality,
$$w(Q_1(z))^{1/p}\asymp w'(Q_1(z))^{-1/p'},\quad z\in\C^n.$$
Consequently, by Lemma \ref{test}, if $w\in A^{\res}_p$ for some $1<p<\infty$, then
\begin{equation}\label{test'}
\|K_z\|_{F^{p'}_{\alpha,w'}}\asymp e^{\frac{\alpha}{2}|z|^2}w'(Q_1(z))^{1/p'}\asymp e^{\frac{\alpha}{2}|z|^2}w(Q_1(z))^{-1/p},\quad z\in\C^n.
\end{equation}
In particular,
\begin{equation}\label{prod}
\|K_z\|_{F^p_{\alpha,w}}\cdot\|K_z\|_{F^{p'}_{\alpha,w'}}\asymp e^{\alpha|z|^2},\quad z\in\C^n.
\end{equation}

The following two lemmas were proved in \cite[Lemmas 3.1 and 3.2]{CHW}.

\begin{lemma}\label{dual}
Let $\alpha>0$, $1<p<\infty$ and $w\in A^{\res}_p$. Then the dual space of $F^p_{\alpha,w}$ can be identified with $F^{p'}_{\alpha,w'}$ under the pairing $\langle\cdot,\cdot\rangle_{\alpha}$.
\end{lemma}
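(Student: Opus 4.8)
The plan is to recognize the statement as a weighted $L^p$--$L^{p'}$ duality adapted to the Fock space, with the Fock projection $P_{\alpha}$ providing the passage from $L^{p'}$ back to the holomorphic subspace. The algebraic hinge throughout is the pointwise identity $w^{1/p}(w')^{1/p'}=w^{1/p}w^{-1/p}=1$, which converts the sesquilinear form $\langle\cdot,\cdot\rangle_{\alpha}$ into an essentially unweighted $L^p$--$L^{p'}$ pairing once the Gaussian factors are distributed.

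First I would check that each $g\in F^{p'}_{\alpha,w'}$ induces a bounded functional $f\mapsto\langle f,g\rangle_{\alpha}$ on $F^p_{\alpha,w}$ with operator norm at most a constant times $\|g\|_{F^{p'}_{\alpha,w'}}$. Writing the integrand of $\langle f,g\rangle_{\alpha}$ as $\big(fe^{-\frac{\alpha}{2}|z|^2}w^{1/p}\big)\overline{\big(ge^{-\frac{\alpha}{2}|z|^2}(w')^{1/p'}\big)}$ and applying H\"{o}lder's inequality with exponents $p,p'$ yields the bound, since the two factors have $L^p(dv)$ and $L^{p'}(dv)$ norms equal to $\|f\|_{L^p_{\alpha,w}}$ and $\|g\|_{L^{p'}_{\alpha,w'}}$ respectively. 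This gives the embedding $F^{p'}_{\alpha,w'}\hookrightarrow(F^p_{\alpha,w})^*$.

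For surjectivity, given $\Lambda\in(F^p_{\alpha,w})^*$ I would extend it by Hahn--Banach to a functional on $L^p_{\alpha,w}$ of the same norm. The same dual-weight identity identifies $(L^p_{\alpha,w})^*$ with $L^{p'}_{\alpha,w'}$ under $\langle\cdot,\cdot\rangle_{\alpha}$ (the standard duality of weighted $L^p$ spaces, realized through the isometry $f\mapsto fe^{-\frac{\alpha}{2}|\cdot|^2}w^{1/p}$ onto $L^p(dv)$), so there is $G\in L^{p'}_{\alpha,w'}$ with $\Lambda(f)=\langle f,G\rangle_{\alpha}$ for all $f\in F^p_{\alpha,w}$ and $\|G\|_{L^{p'}_{\alpha,w'}}\asymp\|\Lambda\|$. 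I then set $g:=P_{\alpha}G$. Since $w'\in A^{\res}_{p'}$, Theorem \ref{one-weight} gives that $P_{\alpha}:L^{p'}_{\alpha,w'}\to L^{p'}_{\alpha,w'}$ is bounded, so $g\in F^{p'}_{\alpha,w'}$ with $\|g\|_{F^{p'}_{\alpha,w'}}\lesssim\|\Lambda\|$. The point is then to verify $\langle f,G\rangle_{\alpha}=\langle f,g\rangle_{\alpha}$ for $f\in F^p_{\alpha,w}$: using $P_{\alpha}f=f$ from Lemma \ref{equal}, this reduces to the self-adjointness relation $\langle P_{\alpha}f,G\rangle_{\alpha}=\langle f,P_{\alpha}G\rangle_{\alpha}$, which follows from Fubini's theorem and the kernel symmetry $\overline{K_z(u)}=K_u(z)$.

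The remaining steps are uniqueness and the reverse norm bound. Uniqueness follows by testing against kernels: if $\langle\cdot,g\rangle_{\alpha}$ vanishes on $F^p_{\alpha,w}$ then, since $K_z\in F^p_{\alpha,w}$ by Lemma \ref{test}, we obtain $\overline{g(z)}=\langle K_z,g\rangle_{\alpha}=0$ for every $z$ (using $g(z)=\langle g,K_z\rangle_{\alpha}$ from Lemma \ref{equal}), forcing $g\equiv0$. Combining this with the two one-sided norm estimates yields $\|g\|_{F^{p'}_{\alpha,w'}}\asymp\|\Lambda\|$. I expect the main obstacle to be the rigorous justification of the self-adjointness identity for $P_{\alpha}$ against the $L^p$--$L^{p'}$ pairing, in particular confirming the absolute integrability needed for Fubini's theorem, together with the careful bookkeeping of the dual-weight identity; once these are in place, the boundedness of $P_{\alpha}$ on $L^{p'}_{\alpha,w'}$ does the real work of returning $G$ to the holomorphic subspace.
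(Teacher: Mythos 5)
Your argument is correct and is the standard one; note that the paper does not prove Lemma \ref{dual} itself but cites \cite[Lemma 3.1]{CHW}, whose proof is essentially what you wrote (H\"older via the identity $(w')^{1/p'}=w^{-1/p}$ for one inclusion; Hahn--Banach, weighted $L^p$--$L^{p'}$ duality, and the boundedness of $P_{\alpha}$ on $L^{p'}_{\alpha,w'}$ from Theorem \ref{one-weight} for surjectivity). The one point you rightly flag --- justifying Fubini in $\langle P_{\alpha}f,G\rangle_{\alpha}=\langle f,P_{\alpha}G\rangle_{\alpha}$ --- is settled by observing that the proof of Theorem \ref{one-weight} actually bounds the positive operator $f\mapsto\int|f(u)||K_{\cdot}(u)|\,d\lambda_{\alpha}(u)$ on $L^p_{\alpha,w}$, which gives the needed absolute integrability after one application of H\"older.
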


\begin{lemma}\label{norm-eq}
Let $\alpha>0$, $1<p<\infty$ and $w\in A^{\res}_p$. Then $F^p_{\alpha,w}=F^p_{\alpha,\widehat{w}}$ with equivalent norms.
\end{lemma}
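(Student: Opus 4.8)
The plan is to prove that the two norms $\|\cdot\|_{F^p_{\alpha,w}}$ and $\|\cdot\|_{F^p_{\alpha,\widehat{w}}}$ are comparable on $\mathcal{H}(\C^n)$; since an equivalence of norms forces the two function spaces to coincide as sets, this gives the lemma. The workhorse for both inequalities is the sub-mean value property of the Fock spaces: for every entire $f$ and every $z\in\C^n$,
\begin{equation*}
|f(z)|^pe^{-\frac{p\alpha}{2}|z|^2}\lesssim\int_{Q_1(z)}|f(u)|^pe^{-\frac{p\alpha}{2}|u|^2}\,dv(u),\tag{$\star$}
\end{equation*}
together with its exponent-one analogue. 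This is standard, and I would recall it quickly: the twisted function $u\mapsto|f(u)e^{-\alpha\langle u,z\rangle}|^p$ is plurisubharmonic, so its value at $z$ is at most its average over the ball $B_{1/2}(z)\subset Q_1(z)$; undoing the twist and using $\frac12|u|^2+\frac12|z|^2-\operatorname{Re}\langle u,z\rangle=\frac12|u-z|^2\leq\frac18$ on $B_{1/2}(z)$ turns this into $(\star)$ with a constant depending only on $p,\alpha,n$.

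The inequality $\|f\|_{F^p_{\alpha,w}}\lesssim\|f\|_{F^p_{\alpha,\widehat{w}}}$ is the easy one and uses only $(\star)$. Inserting $(\star)$ into the definition of the norm and applying Tonelli's theorem together with the elementary symmetry $u\in Q_1(z)\Leftrightarrow z\in Q_1(u)$, I obtain
\begin{equation*}
\|f\|^p_{F^p_{\alpha,w}}\lesssim\int_{\C^n}w(u)\int_{Q_1(u)}|f(z)|^pe^{-\frac{p\alpha}{2}|z|^2}\,dv(z)\,dv(u)=\int_{\C^n}|f(z)|^pe^{-\frac{p\alpha}{2}|z|^2}w(Q_1(z))\,dv(z),
\end{equation*}
and the right-hand side is precisely $\|f\|^p_{F^p_{\alpha,\widehat{w}}}$ since $\widehat{w}(z)=w(Q_1(z))$.

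The reverse inequality is where the hypothesis $w\in A^{\res}_p$ is used, and establishing it is the main obstacle. Here I would first upgrade $(\star)$ to a weighted sub-mean value estimate,
\begin{equation*}
|f(z)|^pe^{-\frac{p\alpha}{2}|z|^2}\,w(Q_1(z))\lesssim\int_{Q_1(z)}|f(u)|^pe^{-\frac{p\alpha}{2}|u|^2}w(u)\,dv(u),\tag{$\star\star$}
\end{equation*}
by starting from the exponent-one form of $(\star)$, writing the integrand as $(|f|e^{-\frac{\alpha}{2}|\cdot|^2}w^{1/p})\cdot w^{-1/p}$, applying Hölder's inequality with exponents $p$ and $p'$, and then bounding $\big(\int_{Q_1(z)}w^{-p'/p}\,dv\big)^{p/p'}$ by $[w]_{A_{p,1}}/w(Q_1(z))$; this last step is exactly the $A_{p,1}$-condition, which is available because $A^{\res}_p$ is independent of the scale $r$ and $v(Q_1)=1$. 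Integrating $(\star\star)$ over $z\in\C^n$ and again invoking Tonelli and the symmetry of $Q_1$, the right-hand side collapses to $\int_{\C^n}|f(u)|^pe^{-\frac{p\alpha}{2}|u|^2}w(u)\,v(Q_1(u))\,dv(u)=\|f\|^p_{F^p_{\alpha,w}}$, which yields $\|f\|_{F^p_{\alpha,\widehat{w}}}\lesssim\|f\|_{F^p_{\alpha,w}}$. Combining the two bounds gives the norm equivalence, hence $F^p_{\alpha,w}=F^p_{\alpha,\widehat{w}}$.
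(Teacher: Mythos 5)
Your proof is correct: the unweighted sub-mean-value inequality, the H\"older step splitting the integrand as $(|f|e^{-\frac{\alpha}{2}|\cdot|^2}w^{1/p})\cdot w^{-1/p}$ together with the $A_{p,1}$-condition (legitimately normalized since $v(Q_1)=1$ and the class $A^{\res}_p$ is scale-independent), and the Tonelli/cube-symmetry bookkeeping all go through, with the degenerate case $w(Q_1(z))=0$ rendering your estimate $(\star\star)$ trivial. The paper gives no argument of its own for this lemma---it is quoted from \cite[Lemmas 3.1 and 3.2]{CHW}---but yours is the standard proof and uses exactly the pointwise machinery that reference (and \cite{CFP}) relies on.
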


\section{Weighted compactness}\label{wcpt}

The purpose of this section is to prove Theorem \ref{main1}. To this end, we introduce the following definition of weakly localized operators, which is adapted from \cite{IMW}. For $z\in\C^n$, we use $k_z^{(p,w)}$ to denote the ``$(p,w)$-normalized'' reproducing kernel at $z$. That is,
$$k^{(p,w)}_z:=\frac{K_z}{\|K_z\|_{F^p_{\alpha,w}}}.$$

\begin{definition}
Let $\alpha>0$, $1<p<\infty$, and let $w$ be a weight on $\C^n$. A bounded linear operator $T$ on $F^p_{\alpha,w}$ is said to be weakly localized if the following conditions hold:
\begin{equation*}
\sup_{z\in\C^n}\int_{\C^n}|\langle Tk^{(p,w)}_z,k^{(p',w')}_u\rangle_{\alpha}|dv(u)<\infty,
\end{equation*}
\begin{equation}\label{sup2}
\sup_{u\in\C^n}\int_{\C^n}|\langle Tk^{(p,w)}_z,k^{(p',w')}_u\rangle_{\alpha}|dv(z)<\infty,
\end{equation}
\begin{equation*}
\lim_{r\to\infty}\sup_{z\in\C^n}\int_{B_r(z)^c}|\langle Tk^{(p,w)}_z,k^{(p',w')}_u\rangle_{\alpha}|dv(u)=0,
\end{equation*}
and
\begin{equation}\label{vanishing2}
\lim_{r\to\infty}\sup_{u\in\C^n}\int_{B_r(u)^c}|\langle Tk^{(p,w)}_z,k^{(p',w')}_u\rangle_{\alpha}|dv(z)=0.
\end{equation}
\end{definition}

We now establish two important properties for weakly localized operators on the Fock spaces $F^p_{\alpha,w}$ induced by $w\in A^{\res}_p$. The first one indicates that all Toeplitz operators with bounded symbols are weakly localized on $F^p_{\alpha,w}$, and the second one says that all weakly localized operators on $F^p_{\alpha,w}$ form an algebra.

\begin{proposition}\label{wl}
Let $\alpha>0$, $1<p<\infty$ and $w\in A^{\res}_p$. Then for any $\varphi\in L^{\infty}$, $T_{\varphi}$ is a weakly localized operator on $F^p_{\alpha,w}$.
\end{proposition}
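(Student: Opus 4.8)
The plan is to verify the four weak-localization conditions directly by estimating the kernel of $T_\varphi$ against the normalized reproducing kernels. Since $\varphi\in L^\infty$, I would first reduce the problem to an estimate on the quantity
$$
\left|\langle T_\varphi k^{(p,w)}_z,k^{(p',w')}_u\rangle_\alpha\right|.
$$
By the definition $T_\varphi f=P_\alpha(\varphi f)$ together with the reproducing identity from Lemma \ref{equal}, this pairing unfolds as an integral of $\varphi(\zeta)k^{(p,w)}_z(\zeta)\overline{k^{(p',w')}_u(\zeta)}$ against the Gaussian measure $d\lambda_\alpha$. Using $|\varphi|\le\|\varphi\|_{L^\infty}$, the absolute value is bounded by a multiple of $\int_{\C^n}|k^{(p,w)}_z(\zeta)|\,|k^{(p',w')}_u(\zeta)|\,d\lambda_\alpha(\zeta)$. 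The normalizations introduce the factors $\|K_z\|_{F^p_{\alpha,w}}^{-1}$ and $\|K_u\|_{F^{p'}_{\alpha,w'}}^{-1}$, which I would control via Lemma \ref{test} and the dual estimate \eqref{test'}.

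The central step is therefore a pointwise Gaussian kernel estimate. Writing out $|K_z(\zeta)|=e^{\alpha\,\mathrm{Re}\langle\zeta,z\rangle}$ and combining the two kernels with the weight factor $e^{-\alpha|\zeta|^2}$, the exponent can be completed to a square so that the integrand decays like $e^{-\alpha|\zeta-(z+u)/2|^2}$ times an exponential in $z,u$ alone. After integrating out $\zeta$ I expect to obtain a bound of the shape
$$
\left|\langle T_\varphi k^{(p,w)}_z,k^{(p',w')}_u\rangle_\alpha\right|
\lesssim \frac{e^{-\frac{\alpha}{4}|z-u|^2}\,w(Q_1(z))^{-1/p}\,w(Q_1(u))^{1/p}}{1},
$$
using \eqref{prod} to cancel the $e^{\alpha|z|^2}$-type factors. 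The ratio $w(Q_1(u))^{1/p}/w(Q_1(z))^{1/p}$ is then handled by the $A^{\res}_p$ doubling estimate \eqref{ele1}, which gives $w(Q_1(u))/w(Q_1(z))\lesssim C^{\sqrt{2n}+|u-z|}$ for some $C>1$. Substituting this, the Gaussian factor $e^{-\frac{\alpha}{4}|z-u|^2}$ dominates the exponential growth $C^{|u-z|/p}$, yielding an overall bound of the form $e^{-c|z-u|^2}$ for some $c>0$ after absorbing lower-order terms.

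Once this Gaussian off-diagonal decay is in hand, all four conditions follow almost mechanically. For the two supremum conditions, I would integrate the bound $e^{-c|z-u|^2}$ in either $u$ or $z$ over all of $\C^n$; the integral of a Gaussian is finite and independent of the center, giving the required uniform bound. For the two vanishing-tail conditions, I would integrate the same bound over $B_r(z)^c$ (respectively $B_r(u)^c$); the tail of a Gaussian integral tends to $0$ as $r\to\infty$, uniformly in the center, which is exactly \eqref{vanishing2} and its companion.

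The main obstacle I anticipate is controlling the interplay between the weight ratio and the Gaussian decay: one must ensure that the subexponential (in fact exponential-linear) growth coming from the $A^{\res}_p$ estimate \eqref{ele1} is genuinely swallowed by the quadratic Gaussian decay, so that a clean bound $e^{-c|z-u|^2}$ survives with a strictly positive $c$. This requires choosing the constant in the completion-of-squares step carefully and being slightly wasteful with part of the Gaussian exponent to absorb the factor $C^{|u-z|/p}$; everything afterward is a routine Gaussian integral computation that I would not carry out in full detail.
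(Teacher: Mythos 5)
Your proposal is correct and follows essentially the same route as the paper: reduce to $\langle \varphi k^{(p,w)}_z,k^{(p',w')}_u\rangle_\alpha$, bound the weight ratio by $C^{|u-z|}$ via \eqref{ele1}, and beat that linear-exponential growth with the Gaussian off-diagonal decay. The only cosmetic difference is that you evaluate the Gaussian convolution in closed form as $e^{-\frac{\alpha}{4}|z-u|^2}$ before integrating in $z$ or $u$, whereas the paper keeps the intermediate $\xi$-integral and handles the tail by splitting into $|z-\xi|\geq r/2$ or $|\xi-u|\geq r/2$; both yield the same estimates.
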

\begin{proof}
We only show that $T_{\varphi}$ satisfies the conditions \eqref{sup2} and \eqref{vanishing2}. The other two are similar. To this end, we apply Lemmas \ref{equal} and \ref{test} and the estimate \eqref{test'} to obtain that for any $z,u\in\C^n$,
\begin{align*}
&|\langle T_{\varphi}k^{(p,w)}_z,k^{(p',w')}_u\rangle_{\alpha}|\\
&\ =|\langle P_{\alpha}\left(\varphi k^{(p,w)}_z\right),k^{(p',w')}_u\rangle_{\alpha}|\\
&\ =|\langle \varphi k^{(p,w)}_z,k^{(p',w')}_u\rangle_{\alpha}|\\
&\ =\left|\int_{\C^n}\varphi(\xi)k^{(p,w)}_z(\xi)\overline{k^{(p',w')}_u(\xi)}d\lambda_{\alpha}(\xi)\right|\\
&\ \leq\|\varphi\|_{L^{\infty}}\int_{\C^n}|\langle k^{(p,w)}_z,K_{\xi}\rangle_{\alpha}|
    |\langle K_{\xi},k^{(p',w')}_u\rangle_{\alpha}|d\lambda_{\alpha}(\xi)\\
%&\ \asymp\|\varphi\|_{L^{\infty}}\int_{\C^n}|\langle e^{-\frac{\alpha}{2}|z|^2}\widehat{w}(z)^{-1/p}K_z,K_{\xi}\rangle_{\alpha}|
%    |\langle K_{\xi},e^{-\frac{\alpha}{2}|u|^2}\widehat{w}(u)^{1/p}K_u\rangle_{\alpha}|d\lambda_{\alpha}(\xi)\\
&\ \asymp\|\varphi\|_{L^{\infty}}\left(\frac{\widehat{w}(u)}{\widehat{w}(z)}\right)^{1/p}
    \int_{\C^n}|\langle k_z,k_{\xi}\rangle_{\alpha}||\langle k_{\xi},k_u\rangle_{\alpha}|dv(\xi),
\end{align*}
which, together with \eqref{ele1}, implies that there exists $C>0$ such that
$$|\langle T_{\varphi}k^{(p,w)}_z,k^{(p',w')}_u\rangle_{\alpha}|
\lesssim\|\varphi\|_{L^{\infty}}C^{|u-z|}\int_{\C^n}|\langle k_z,k_{\xi}\rangle_{\alpha}||\langle k_{\xi},k_u\rangle_{\alpha}|dv(\xi).$$
Noting that
$$|\langle k_z,k_{\xi}\rangle_{\alpha}|=e^{-\frac{\alpha}{2}|\xi|^2}|k_z(\xi)|=e^{-\frac{\alpha}{2}|z-\xi|^2},$$
we have
\begin{equation}\label{zw}
|\langle T_{\varphi}k^{(p,w)}_z,k^{(p',w')}_u\rangle_{\alpha}|
\lesssim\|\varphi\|_{L^{\infty}}C^{|u-z|}\int_{\C^n}e^{-\frac{\alpha}{2}|z-\xi|^2-\frac{\alpha}{2}|\xi-u|^2}dv(\xi).
\end{equation}
Consequently, it follows from Fubini's theorem that
\begin{align*}
&\int_{\C^n}|\langle T_{\varphi}k^{(p,w)}_z,k^{(p',w')}_u\rangle_{\alpha}|dv(z)\\
&\ \lesssim\|\varphi\|_{L^{\infty}}\int_{\C^n}C^{|u-z|}\int_{\C^n}e^{-\frac{\alpha}{2}|z-\xi|^2-\frac{\alpha}{2}|\xi-u|^2}dv(\xi)dv(z)\\
&\ \leq\|\varphi\|_{L^{\infty}}\int_{\C^n}C^{|u-\xi|}e^{-\frac{\alpha}{2}|\xi-u|^2}
    \int_{\C^n}C^{|\xi-z|}e^{-\frac{\alpha}{2}|z-\xi|^2}dv(z)dv(\xi)\\
&\ \lesssim\|\varphi\|_{L^{\infty}}.
\end{align*}
Since $u\in\C^n$ is arbitrary, we conclude that $T_{\varphi}$ satisfies \eqref{sup2}. On the other hand, if $z\in B_r(u)^c$ for some $r>0$, then for any $\xi\in\C^n$, either $|z-\xi|\geq r/2$ or $|\xi-u|\geq r/2$, which implies that
$$e^{-\frac{\alpha}{2}|z-\xi|^2-\frac{\alpha}{2}|\xi-u|^2}\leq e^{-\frac{\alpha}{16}r^2-\frac{\alpha}{4}|z-\xi|^2-\frac{\alpha}{4}|\xi-u|^2}.$$
Hence similarly as before, \eqref{zw} gives that
\begin{align*}
&\int_{B_r(u)^c}|\langle T_{\varphi}k^{(p,w)}_z,k^{(p',w')}_u\rangle_{\alpha}|dv(z)\\
&\ \lesssim e^{-\frac{\alpha}{16}r^2}\|\varphi\|_{L^{\infty}}
    \int_{B_r(u)^c}C^{|u-z|}\int_{\C^n}e^{-\frac{\alpha}{4}|z-\xi|^2-\frac{\alpha}{4}|\xi-u|^2}dv(\xi)dv(z)\\
&\ \lesssim e^{-\frac{\alpha}{16}r^2}\|\varphi\|_{L^{\infty}},
\end{align*}
which, in conjunction with the arbitrariness of $u\in\C^n$, implies that $T_{\varphi}$ satisfies \eqref{vanishing2}. The proof is complete.
\end{proof}

\begin{proposition}\label{alg}
Let $\alpha>0$, $1<p<\infty$ and $w\in A^{\res}_p$. Then all weakly localized operators on $F^p_{\alpha,w}$ form an algebra.
\end{proposition}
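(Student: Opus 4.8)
The plan is to prove that weakly localized operators are closed under linear combinations and under composition. Closure under linear combinations is immediate: each of the four defining quantities is subadditive in $T$ (triangle inequality inside the integrals and the supremum), so if $S,T$ are weakly localized, so is any scalar combination. The real content is closure under composition, so fix weakly localized $S,T$ and abbreviate $\Phi_T(z,u):=\langle Tk^{(p,w)}_z,k^{(p',w')}_u\rangle_{\alpha}$, and likewise $\Phi_S$ and $\Phi_{ST}$.

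The first step is a resolution-of-identity formula expressing $\Phi_{ST}$ as a ``composition'' of the kernels $\Phi_T$ and $\Phi_S$. Writing $h:=Tk^{(p,w)}_z\in F^p_{\alpha,w}$ and passing to the Banach-space adjoint $S^*$ on the dual $F^{p'}_{\alpha,w'}$ (Lemma \ref{dual}), I would use the reproducing identity of Lemma \ref{equal} in the form $\overline{(S^*k^{(p',w')}_u)(\xi)}=\langle K_\xi,S^*k^{(p',w')}_u\rangle_{\alpha}=\langle SK_\xi,k^{(p',w')}_u\rangle_{\alpha}$ to write
$$\Phi_{ST}(z,u)=\langle Sh,k^{(p',w')}_u\rangle_{\alpha}=\langle h,S^*k^{(p',w')}_u\rangle_{\alpha}=\int_{\C^n}h(\xi)\,\langle SK_\xi,k^{(p',w')}_u\rangle_{\alpha}\,d\lambda_{\alpha}(\xi).$$
Expanding $h(\xi)=\langle h,K_\xi\rangle_{\alpha}=\|K_\xi\|_{F^{p'}_{\alpha,w'}}\Phi_T(z,\xi)$ and $\langle SK_\xi,k^{(p',w')}_u\rangle_{\alpha}=\|K_\xi\|_{F^p_{\alpha,w}}\Phi_S(\xi,u)$, and then invoking \eqref{prod} to absorb $\|K_\xi\|_{F^p_{\alpha,w}}\|K_\xi\|_{F^{p'}_{\alpha,w'}}\,d\lambda_{\alpha}(\xi)\asymp dv(\xi)$, I obtain the pointwise bound
$$|\Phi_{ST}(z,u)|\lesssim\int_{\C^n}|\Phi_T(z,\xi)|\,|\Phi_S(\xi,u)|\,dv(\xi).$$

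With this kernel inequality, the two boundedness conditions for $ST$ follow from Tonelli. Integrating in $u$ and interchanging the order of integration,
$$\int_{\C^n}|\Phi_{ST}(z,u)|\,dv(u)\lesssim\int_{\C^n}|\Phi_T(z,\xi)|\Big(\int_{\C^n}|\Phi_S(\xi,u)|\,dv(u)\Big)dv(\xi)\leq M_S\int_{\C^n}|\Phi_T(z,\xi)|\,dv(\xi)\lesssim M_SM_T,$$
uniformly in $z$, where $M_S,M_T$ are the finite suprema furnished by the first two weak-localization conditions of $S,T$; the condition \eqref{sup2} with the roles of $z$ and $u$ swapped is symmetric, using the boundedness of the $z$-integral instead.

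The main obstacle is the two vanishing conditions, which I would obtain by a geometric splitting of the $\xi$-integral. To control $\sup_z\int_{B_r(z)^c}|\Phi_{ST}(z,u)|\,dv(u)$, split the inner $\xi$-integral into $\xi\in B_{r/2}(z)^c$ and $\xi\in B_{r/2}(z)$. On $B_{r/2}(z)^c$ the $u$-integral of $|\Phi_S(\xi,\cdot)|$ is bounded by $M_S$, leaving $\sup_z\int_{B_{r/2}(z)^c}|\Phi_T(z,\xi)|\,dv(\xi)$, which tends to $0$ by the third defining condition of $T$. On $B_{r/2}(z)$, the constraints $u\in B_r(z)^c$ and $\xi\in B_{r/2}(z)$ force $|\xi-u|\geq r/2$, i.e. $u\in B_{r/2}(\xi)^c$, so the inner $u$-integral is controlled by the third condition of $S$ while the remaining $\xi$-integral stays bounded by $M_T$; both pieces vanish as $r\to\infty$. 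This yields the third condition for $ST$, and the fourth condition \eqref{vanishing2} is handled by the mirror-image splitting (using the second and fourth conditions of $S,T$). The only delicate point is the justification of the resolution-of-identity formula, but routing it through the adjoint $S^*$ as above reduces everything to the absolutely convergent pairing on $F^p_{\alpha,w}\times F^{p'}_{\alpha,w'}$, so no interchange of $S$ with an integral has to be justified directly.
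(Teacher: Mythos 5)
Your proposal is correct and takes essentially the same route as the paper's proof: the same kernel composition bound $|\langle STk^{(p,w)}_z,k^{(p',w')}_u\rangle_{\alpha}|\lesssim\int_{\C^n}|\langle Tk^{(p,w)}_z,k^{(p',w')}_{\xi}\rangle_{\alpha}||\langle Sk^{(p,w)}_{\xi},k^{(p',w')}_u\rangle_{\alpha}|dv(\xi)$ obtained via the adjoint, Lemma \ref{equal} and \eqref{prod}, followed by the same near/far splitting of the intermediate variable $\xi$ against the boundedness and vanishing conditions of the two factors. The paper writes out the mirror condition \eqref{vanishing2} instead of the third one, but the argument is identical.
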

\begin{proof}
It is sufficient to show that if $T$ and $S$ are weakly localized operators on $F^p_{\alpha,w}$, then $TS$ is weakly localized on $F^p_{\alpha,w}$. Suppose that $T$ and $S$ are weakly localized on $F^p_{\alpha,w}$. We only verify that $TS$ satisfies the condition \eqref{vanishing2}. The others are similar. Fix $\epsilon>0$. By the definition of weakly localized operators, there exists $r_0>0$ such that
\begin{equation}\label{ttt}
\sup_{u\in\C^n}\int_{B_{r_0/2}(u)^c}|\langle Tk^{(p,w)}_z,k^{(p',w')}_u\rangle_{\alpha}|dv(z)<\epsilon
\end{equation}
and
\begin{equation}\label{sss}
\sup_{u\in\C^n}\int_{B_{r_0/2}(u)^c}|\langle Sk^{(p,w)}_z,k^{(p',w')}_u\rangle_{\alpha}|dv(z)<\epsilon.
\end{equation}
For any $u\in\C^n$, applying Lemma \ref{equal} and the estimate \eqref{prod}, we have
\begin{align*}
I_{r_0}(u):&=\int_{B_{r_0}(u)^c}|\langle TSk^{(p,w)}_z,k^{(p',w')}_u\rangle_{\alpha}|dv(z)\\
&=\int_{B_{r_0}(u)^c}|\langle Sk^{(p,w)}_z,T^*k^{(p',w')}_u\rangle_{\alpha}|dv(z)\\
&=\int_{B_{r_0}(u)^c}\left|\int_{\C^n}Sk^{(p,w)}_z(\xi)\overline{T^*k^{(p',w')}_u(\xi)}d\lambda_{\alpha}(\xi)\right|dv(z)\\
&\leq\int_{B_{r_0}(u)^c}\int_{\C^n}|\langle Sk^{(p,w)}_z,K_{\xi}\rangle_{\alpha}|
    |\langle TK_{\xi},k^{(p',w')}_u\rangle_{\alpha}|d\lambda_{\alpha}(\xi)dv(z)\\
&\asymp\int_{B_{r_0}(u)^c}\int_{\C^n}|\langle Sk^{(p,w)}_z,k^{(p',w')}_{\xi}\rangle_{\alpha}|
    |\langle Tk^{(p,w)}_{\xi},k^{(p',w')}_u\rangle_{\alpha}|dv(\xi)dv(z)\\
&=\int_{B_{r_0/2}(u)}|\langle Tk^{(p,w)}_{\xi},k^{(p',w')}_u\rangle_{\alpha}|
    \int_{B_{r_0}(u)^c}|\langle Sk^{(p,w)}_z,k^{(p',w')}_{\xi}\rangle_{\alpha}|dv(z)dv(\xi)\\
&\quad+\int_{B_{r_0/2}(u)^c}|\langle Tk^{(p,w)}_{\xi},k^{(p',w')}_u\rangle_{\alpha}|
    \int_{B_{r_0}(u)^c}|\langle Sk^{(p,w)}_z,k^{(p',w')}_{\xi}\rangle_{\alpha}|dv(z)dv(\xi)\\
&=:I_{r_0,1}(u)+I_{r_0,2}(u).
\end{align*}
Note that for $\xi\in B_{r_0/2}(u)$, $B_{r_0}(u)^c\subset B_{r_0/2}(\xi)^c$. Consequently, by \eqref{sss} and the definition of weakly localized operators, we obtain that
\begin{align*}
I_{r_0,1}(u)&\leq\int_{B_{r_0/2}(u)}|\langle Tk^{(p,w)}_{\xi},k^{(p',w')}_u\rangle_{\alpha}|
    \int_{B_{r_0/2}(\xi)^c}|\langle Sk^{(p,w)}_z,k^{(p',w')}_{\xi}\rangle_{\alpha}|dv(z)dv(\xi)\\
&<\epsilon\int_{\C^n}|\langle Tk^{(p,w)}_{\xi},k^{(p',w')}_u\rangle_{\alpha}|dv(\xi)\\
&\lesssim\epsilon.
\end{align*}
Similarly, by \eqref{ttt}, we establish that
\begin{align*}
I_{r_0,2}(u)&\leq\int_{B_{r_0/2}(u)^c}|\langle Tk^{(p,w)}_{\xi},k^{(p',w')}_u\rangle_{\alpha}|
    \int_{\C^n}|\langle Sk^{(p,w)}_z,k^{(p',w')}_{\xi}\rangle_{\alpha}|dv(z)dv(\xi)\\
&\lesssim\int_{B_{r_0/2}(u)^c}|\langle Tk^{(p,w)}_{\xi},k^{(p',w')}_u\rangle_{\alpha}|dv(\xi)\\
&<\epsilon.
\end{align*}
Therefore,
$$I_{r_0}(u)=\int_{B_{r_0}(u)^c}|\langle TSk^{(p,w)}_z,k^{(p',w')}_u\rangle_{\alpha}|dv(z)\lesssim\epsilon.$$
Since $u\in\C^n$ is arbitrary, we conclude that $TS$ satisfies \eqref{vanishing2}.
\end{proof}

We will need the following Riesz--Kolmogorov type characterization of precompact sets in weighted Fock spaces, which is a direct application of \cite[Theorem 1.4]{MSWW}.

\begin{lemma}\label{RKtype}
Let $\alpha>0$, $1<p<\infty$ and $w\in A^{\res}_p$. A bounded set $\mathcal{S}\subset F^p_{\alpha,w}$ is precompact if and only if
$$\lim_{r\to\infty}\sup_{f\in\mathcal{S}}\int_{B_r(0)^c}|f(z)|^pe^{-\frac{p\alpha}{2}|z|^2}\widehat{w}(z)dv(z)=0.$$
\end{lemma}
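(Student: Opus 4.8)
The plan is to derive Lemma~\ref{RKtype} as a direct consequence of the general Riesz--Kolmogorov type theorem \cite[Theorem 1.4]{MSWW}, so the main task is to cast the weighted Fock space $F^p_{\alpha,w}$ into the framework of that result. First I would invoke Lemma~\ref{norm-eq} to replace $w$ by the averaged weight $\widehat{w}$, so that throughout the argument the norm of $f\in F^p_{\alpha,w}$ is comparable to $\int_{\C^n}|f(z)|^pe^{-\frac{p\alpha}{2}|z|^2}\widehat{w}(z)\,dv(z)$; this explains why the tail integral in the statement is written with $\widehat{w}$ rather than $w$. The point of passing to $\widehat{w}$ is that, unlike a general $A^{\res}_p$-weight, $\widehat{w}$ is continuous and slowly varying on unit balls, which is precisely the regularity one needs to fit the hypotheses of the abstract compactness criterion.

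Next I would verify the structural hypotheses that \cite[Theorem 1.4]{MSWW} requires of the underlying space of entire functions and its defining weight. The two ingredients to check are: (i) pointwise control of $|f(z)|$ by the local norm of $f$, i.e. a reproducing-type estimate $|f(z)|^pe^{-\frac{p\alpha}{2}|z|^2}\lesssim \frac{1}{\widehat{w}(z)}\int_{Q_1(z)}|f(u)|^pe^{-\frac{p\alpha}{2}|u|^2}\widehat{w}(u)\,dv(u)$, which follows from subharmonicity of $|f|^p$ together with the comparability \eqref{ele1} of $\widehat{w}$ across nearby unit cubes; and (ii) the doubling/slowly-varying property of $\widehat{w}$, which is supplied by Lemma~\ref{ele}(1) since $w\in A^{\res}_p$ is $r$-doubling for every $r>0$. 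With these in hand, the weight $e^{-\frac{p\alpha}{2}|z|^2}\widehat{w}(z)$ satisfies the admissibility conditions under which the cited theorem asserts that precompactness of a bounded set in the corresponding entire-function space is equivalent to the uniform vanishing of the tail integral $\int_{B_r(0)^c}|f|^pe^{-\frac{p\alpha}{2}|z|^2}\widehat{w}\,dv$.

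The main obstacle I anticipate is purely bookkeeping rather than conceptual: one must match the exact normalization and hypotheses of \cite[Theorem 1.4]{MSWW} to the present setting, making sure that the estimate \eqref{ele1} (with the choice $r=1$) delivers the precise slowly-varying bound that theorem demands, and that the equivalence of norms from Lemma~\ref{norm-eq} is applied uniformly over the bounded family $\mathcal{S}$ so that ``bounded in $F^p_{\alpha,w}$'' and ``bounded with respect to the $\widehat{w}$-integral'' are interchangeable. Once the translation is set up, both directions of the equivalence are immediate from the cited theorem: the forward implication says a precompact set has uniformly small tails, and the converse says uniformly small tails plus boundedness force precompactness. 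I would therefore keep the proof short, stating the two verifications above and then citing \cite[Theorem 1.4]{MSWW} to close the argument.
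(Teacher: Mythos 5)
Your overall strategy coincides with the paper's: replace $w$ by $\widehat{w}$ via Lemma~\ref{norm-eq} and reduce the statement to \cite[Theorem 1.4]{MSWW}. The gap lies in what you propose to verify in order to invoke that theorem. \cite[Theorem 1.4]{MSWW} is a compactness criterion for Banach spaces admitting a \emph{continuous frame}: to apply it one must exhibit families $\{f_u\}_{u\in\C^n}$ and $\{g_u\}_{u\in\C^n}\subset (F^p_{\alpha,w})^*$ satisfying the synthesis identity $f(z)=\int_{\C^n}\langle f,g_u\rangle\, f_u(z)\,dv(u)$ together with the coefficient norm equivalence $\|f\|_{F^p_{\alpha,w}}\asymp\big\|\langle f,g_{\cdot}\rangle\big\|_{L^p(\C^n,dv)}$. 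Your two proposed verifications --- a subharmonicity-based pointwise bound and the doubling of $\widehat{w}$ --- are ingredients of the norm equivalence (essentially Lemma~\ref{norm-eq}), but they do not by themselves produce a frame, and the synthesis identity, which is the substantive hypothesis, is absent from your plan. The paper takes $g_u=k^{(p',w')}_u$ and $f_u(z)=(\alpha/\pi)^n\|K_u\|_{F^{p'}_{\alpha,w'}}e^{\alpha\langle z,u\rangle-\alpha|u|^2}$, derives the synthesis identity from the reproducing formula of Lemma~\ref{equal}, checks that $\{k^{(p',w')}_u\}$ is bounded in the dual space via Lemma~\ref{dual}, and uses \eqref{test'} to identify $|\langle f,k^{(p',w')}_u\rangle_{\alpha}|^p\asymp|f(u)|^pe^{-\frac{p\alpha}{2}|u|^2}\widehat{w}(u)$; this last identification is also the step that converts the tail condition of \cite[Theorem 1.4]{MSWW}, which is stated in terms of the frame coefficients $\langle f,k^{(p',w')}_u\rangle_{\alpha}$, into the tail integral appearing in the statement of the lemma. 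Without these elements the appeal to \cite[Theorem 1.4]{MSWW} does not go through. The remainder of your reduction (uniformity of the norm equivalence over the bounded family $\mathcal{S}$, and the role of $\widehat{w}$) is correct and matches the paper.
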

\begin{proof}
For $u\in\C^n$, let
$$f_u(z)=\left(\frac{\alpha}{\pi}\right)^n\|K_u\|_{F^{p'}_{\alpha,w'}}e^{\alpha\langle z,u\rangle-\alpha|u|^2},\quad z\in\C^n.$$
We claim that $\big(\{f_u\}_{u\in\C^n},\{k^{(p',w')}_u\}_{u\in\C^n}\big)$ is a continuous frame for $F^p_{\alpha,w}$ with respect to $L^p(\C^n,dv)$ (see \cite{MSWW} for the definition). In fact, for any $f\in F^p_{\alpha,w}$ and $z\in\C^n$, Lemma \ref{equal} yields that
\begin{align*}
f(z)=P_{\alpha}f(z)&=\int_{\C^n}f(u)\overline{K_z(u)}d\lambda_{\alpha}(u)\\
&=\int_{\C^n}\langle f,K_u\rangle_{\alpha}K_u(z)d\lambda_{\alpha}(u)\\
&=\int_{\C^n}\langle f,k^{(p',w')}_u\rangle_{\alpha}\|K_u\|_{F^{p'}_{\alpha,w'}}K_u(z)d\lambda_{\alpha}(u)\\
&=\int_{\C^n}\langle f,k^{(p',w')}_u\rangle_{\alpha}f_u(z)dv(z).
\end{align*}
Moreover, by Lemma \ref{norm-eq} and \eqref{test'},
\begin{equation}\label{frame2}
\|f\|^p_{F^p_{\alpha,w}}\asymp\int_{\C^n}|f(u)|^pe^{-\frac{p\alpha}{2}|u|^2}\widehat{w}(u)dv(u)
\asymp\int_{\C^n}|\langle f,k^{(p',w')}_u\rangle_{\alpha}|^pdv(u),
\end{equation}
and $\{k^{(p',w')}_u\}_{u\in\C^n}$ is a bounded set in $F^{p'}_{\alpha,w'}=(F^p_{\alpha,w})^*$ due to Lemma \ref{dual}. Therefore, $\big(\{f_u\}_{u\in\C^n},\{k^{(p',w')}_u\}_{u\in\C^n}\big)$ is a continuous frame for $F^p_{\alpha,w}$ with respect to $L^p(\C^n,dv)$. Hence by \cite[Theorem 1.4]{MSWW}, $\mathcal{S}$ is precompact if and only if
$$\lim_{r\to\infty}\sup_{f\in\mathcal{S}}\int_{B_r(0)^c}|\langle f,k^{(p',w')}_u\rangle_{\alpha}|^pdv(u)=0,$$
which is equivalent to
$$\lim_{r\to\infty}\sup_{f\in\mathcal{S}}\int_{B_r(0)^c}|f(u)|^pe^{-\frac{p\alpha}{2}|u|^2}\widehat{w}(u)dv(u)=0.$$
The proof is complete.
\end{proof}

Based on Lemma \ref{RKtype}, we can obtain the following characterization of compact operators immediately.

\begin{corollary}\label{T-cpt}
Let $\alpha>0$, $1<p<\infty$ and $w\in A^{\res}_p$. Suppose that $T$ is a bounded linear operator on $F^p_{\alpha,w}$. Then $T$ is compact if and only if
$$\lim_{r\to\infty}\sup_{\|f\|_{F^p_{\alpha,w}}\leq1}\int_{B_r(0)^c}|Tf(z)|^pe^{-\frac{p\alpha}{2}|z|^2}\widehat{w}(z)dv(z)=0.$$
\end{corollary}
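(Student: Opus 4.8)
The plan is to deduce the corollary directly from Lemma \ref{RKtype} by applying the latter to the image of the closed unit ball under $T$. First I would recall that, by the very definition of compactness, the bounded operator $T$ is compact on $F^p_{\alpha,w}$ precisely when the set
$$\mathcal{S}:=\{Tf:\|f\|_{F^p_{\alpha,w}}\leq1\}$$
is precompact in $F^p_{\alpha,w}$. Since $T$ is bounded, every $g\in\mathcal{S}$ satisfies $\|g\|_{F^p_{\alpha,w}}\leq\|T\|$, so $\mathcal{S}$ is a bounded subset of $F^p_{\alpha,w}$ and Lemma \ref{RKtype} applies to it.

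Next I would invoke Lemma \ref{RKtype} with this choice of $\mathcal{S}$: it asserts that $\mathcal{S}$ is precompact if and only if
$$\lim_{r\to\infty}\sup_{g\in\mathcal{S}}\int_{B_r(0)^c}|g(z)|^pe^{-\frac{p\alpha}{2}|z|^2}\widehat{w}(z)dv(z)=0.$$
Finally, unwinding the definition of $\mathcal{S}$, the supremum over $g\in\mathcal{S}$ is exactly the supremum over all $f$ in the closed unit ball of $F^p_{\alpha,w}$ of the same integral with $g$ replaced by $Tf$, which is precisely the condition in the statement. Combining these three observations yields the claimed equivalence.

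There is no genuine obstacle here: the substantive analytic content—the realization of the norm through the continuous frame via \eqref{frame2} and the appeal to the abstract Riesz--Kolmogorov criterion of \cite{MSWW}—has already been carried out in the proof of Lemma \ref{RKtype}. The corollary is a purely formal restatement, the only point needing a (trivial) check being the boundedness of $\mathcal{S}$, which follows at once from the boundedness of $T$.
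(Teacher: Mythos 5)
Your proposal is correct and matches the paper's intent exactly: the paper gives no separate proof, stating only that the corollary follows immediately from Lemma \ref{RKtype}, and the route you spell out --- applying that lemma to the bounded set $T(\{f:\|f\|_{F^p_{\alpha,w}}\leq 1\})$ and unwinding the supremum --- is precisely that deduction.
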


The following lemma is the last piece in the proof of Theorem \ref{main1}.

\begin{lemma}\label{cpt-suff}
Let $\alpha>0$, $1<p<\infty$ and $w\in A^{\res}_p$. Suppose that $T$ is a bounded linear operator on $F^p_{\alpha,w}$ such that \eqref{sup2} holds. If
\begin{equation}\label{limit0}
\lim_{r\to\infty}\sup_{z\in\C^n}\int_{B_r(0)^c}|\langle Tk^{(p,w)}_z,k^{(p',w')}_u\rangle_{\alpha}|dv(u)=0,
\end{equation}
then $T$ is compact on $F^p_{\alpha,w}$.
\end{lemma}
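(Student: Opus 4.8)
The plan is to apply the Riesz--Kolmogorov criterion in the form of Corollary \ref{T-cpt} and to reduce the required tail estimate for $Tf$ to a Schur-type bound for the integral kernel $(v,u)\mapsto\langle Tk^{(p,w)}_u,k^{(p',w')}_v\rangle_\alpha$, whose uniform row bound and column-tail decay are precisely the two hypotheses \eqref{sup2} and \eqref{limit0}. First I would restate the criterion in terms of frame coefficients: setting $b(v):=\langle Tf,k^{(p',w')}_v\rangle_\alpha$, Lemma \ref{equal} gives $b(v)=Tf(v)/\|K_v\|_{F^{p'}_{\alpha,w'}}$, so \eqref{test'} yields the pointwise equivalence $|b(v)|^p\asymp|Tf(v)|^p e^{-\frac{p\alpha}{2}|v|^2}\widehat{w}(v)$. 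Thus by Corollary \ref{T-cpt} it suffices to prove
$$\lim_{r\to\infty}\sup_{\|f\|_{F^p_{\alpha,w}}\leq1}\int_{B_r(0)^c}|b(v)|^p\,dv(v)=0,$$
and this is the quantity I will control.

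Next I would represent $b$ as an integral operator acting on the frame coefficients $a(u):=\langle f,k^{(p',w')}_u\rangle_\alpha$ of $f$. Using the reproducing identity established in the proof of Lemma \ref{RKtype}, paired against the element $T^*k^{(p',w')}_v\in F^{p'}_{\alpha,w'}$ (the duality coming from Lemma \ref{dual}), together with the fact that $f_u=c_u k^{(p,w)}_u$ with $|c_u|\asymp1$ by \eqref{prod}, I obtain
$$b(v)=\int_{\C^n}N(v,u)a(u)\,dv(u),\qquad N(v,u):=c_u\langle Tk^{(p,w)}_u,k^{(p',w')}_v\rangle_\alpha,$$
so that $|N(v,u)|\asymp|\langle Tk^{(p,w)}_u,k^{(p',w')}_v\rangle_\alpha|$. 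By \eqref{frame2} one has $\|a\|_{L^p}^p\asymp\|f\|^p_{F^p_{\alpha,w}}$, so it remains to bound the tail of $b$ in terms of $\|a\|_{L^p}$.

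The core is then a Schur-type estimate. Hypothesis \eqref{sup2} is exactly $M:=\sup_v\int_{\C^n}|N(v,u)|\,dv(u)<\infty$, and Hölder's inequality in the $u$-variable gives $|b(v)|^p\leq M^{p-1}\int_{\C^n}|N(v,u)|\,|a(u)|^p\,dv(u)$. Integrating over $B_r(0)^c$ and applying Fubini,
$$\int_{B_r(0)^c}|b(v)|^p\,dv(v)\leq M^{p-1}\int_{\C^n}|a(u)|^p\Big(\int_{B_r(0)^c}|N(v,u)|\,dv(v)\Big)dv(u)\leq M^{p-1}\eta(r)\,\|a\|_{L^p}^p,$$
where $\eta(r):=\sup_u\int_{B_r(0)^c}|N(v,u)|\,dv(v)$. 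After matching the two kernel slots, $\eta(r)\to0$ as $r\to\infty$ is precisely hypothesis \eqref{limit0}. Since $\|a\|_{L^p}^p\lesssim\|f\|^p\leq1$, the tail integral is $\lesssim\eta(r)$ uniformly over the unit ball, which gives the reduced form of Corollary \ref{T-cpt} and hence compactness.

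I expect the main obstacle to be the rigorous justification of the integral representation of $b$: one must check that the continuous-frame reconstruction reproduces the pairing $\langle f,g\rangle_\alpha$ for the general element $g=T^*k^{(p',w')}_v$, not merely for the kernels $K_v$ treated in the proof of Lemma \ref{RKtype}. I would handle this by density of finite linear combinations of kernels in $F^{p'}_{\alpha,w'}$ together with the frame bounds \eqref{frame2} and their $F^{p'}_{\alpha,w'}$-analogue, which make both sides of the identity continuous in $g$. Once this representation and the equivalence $f_u\asymp k^{(p,w)}_u$ are in place, the remainder is the routine Schur computation above, with \eqref{sup2} supplying the uniform row bound and \eqref{limit0} supplying the decay of the column tails.
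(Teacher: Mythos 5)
Your proposal is correct and follows essentially the same route as the paper: the reproducing formula turns $Tf(z)$ into an integral against the kernel $\langle Tk^{(p,w)}_u,k^{(p',w')}_z\rangle_{\alpha}$, H\"older's inequality with \eqref{sup2} as the Schur row bound gives the pointwise estimate, and Fubini with \eqref{limit0} kills the tail, after which Corollary \ref{T-cpt} applies. The technical point you flag is resolved more directly than by density: since $T^*K_z\in F^{p'}_{\alpha,w'}$ and $w'\in A^{\res}_{p'}$, Lemma \ref{equal} gives $\overline{T^*K_z(u)}=\langle K_u,T^*K_z\rangle_{\alpha}=\langle TK_u,K_z\rangle_{\alpha}$ pointwise, which is exactly the integral representation you need.
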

\begin{proof}
Fix $f\in F^p_{\alpha,w}$ with $\|f\|_{F^p_{\alpha,w}}\leq1$. Then for any $z\in\C^n$, Lemma \ref{equal} gives that
\begin{align*}
|Tf(z)|&=|\langle Tf,K_z\rangle_{\alpha}|\\
&=\left|\int_{\C^n}f(u)\overline{T^*K_z(u)}d\lambda_{\alpha}(u)\right|\\
&=\left|\int_{\C^n}\langle f,K_u\rangle_{\alpha}\langle TK_u,K_z\rangle_{\alpha}d\lambda_{\alpha}(u)\right|,
\end{align*}
which, combined with \eqref{prod}, implies that
\begin{align*}
|Tf(z)|&\lesssim\int_{\C^n}|\langle f,k^{(p',w')}_u\rangle_{\alpha}||\langle Tk^{(p,w)}_u,K_z\rangle_{\alpha}|dv(u)\\
&=\|K_z\|_{F^{p'}_{\alpha,w'}}\int_{\C^n}|\langle f,k^{(p',w')}_u\rangle_{\alpha}||\langle Tk^{(p,w)}_u,k^{(p',w')}_z\rangle_{\alpha}|dv(u).
\end{align*}
By H\"{o}lder's inequality and the condition \eqref{sup2},
\begin{align}\label{Tf}
|Tf(z)|^p
&\lesssim\|K_z\|^p_{F^{p'}_{\alpha,w'}}\left(\int_{\C^n}|\langle f,k^{(p',w')}_u\rangle_{\alpha}|
    |\langle Tk^{(p,w)}_u,k^{(p',w')}_z\rangle_{\alpha}|dv(u)\right)^p\nonumber\\
&\leq\|K_z\|^p_{F^{p'}_{\alpha,w'}}\left(\int_{\C^n}|\langle Tk^{(p,w)}_u,k^{(p',w')}_z\rangle_{\alpha}|dv(u)\right)^{p/p'}\cdot\nonumber\\
&    \qquad\qquad\left(\int_{\C^n}|\langle f,k^{(p',w')}_u\rangle_{\alpha}|^p|\langle Tk^{(p,w)}_u,k^{(p',w')}_z\rangle_{\alpha}|dv(u)\right)
    \nonumber\\
&\lesssim\|K_z\|^p_{F^{p'}_{\alpha,w'}}
    \int_{\C^n}|\langle f,k^{(p',w')}_u\rangle_{\alpha}|^p|\langle Tk^{(p,w)}_u,k^{(p',w')}_z\rangle_{\alpha}|dv(u)\nonumber\\
&\asymp e^{\frac{p\alpha}{2}|z|^2}\widehat{w}(z)^{-1}
    \int_{\C^n}|\langle f,k^{(p',w')}_u\rangle_{\alpha}|^p|\langle Tk^{(p,w)}_u,k^{(p',w')}_z\rangle_{\alpha}|dv(u),
\end{align}
where the last step is due to \eqref{test'}.
For any $\epsilon>0$, we can use \eqref{limit0} to find $r_0>0$ such that
$$\sup_{u\in\C^n}\int_{B_r(0)^c}|\langle Tk^{(p,w)}_u,k^{(p',w')}_z\rangle_{\alpha}|dv(z)<\epsilon$$
whenever $r>r_0$. Consequently, the estimate \eqref{Tf} together with Fubini's theorem and \eqref{frame2} yields that for $r>r_0$,
\begin{align*}
&\int_{B_r(0)}|Tf(z)|^pe^{-\frac{p\alpha}{2}|z|^2}\widehat{w}(z)dv(z)\\
&\ \lesssim\int_{B_r(0)}\int_{\C^n}|\langle f,k^{(p',w')}_u\rangle_{\alpha}|^p|\langle Tk^{(p,w)}_u,k^{(p',w')}_z\rangle_{\alpha}|dv(u)dv(z)\\
&\ =\int_{\C^n}|\langle f,k^{(p',w')}_u\rangle_{\alpha}|^p\int_{B_r(0)^c}|\langle Tk^{(p,w)}_u,k^{(p',w')}_z\rangle_{\alpha}|dv(z)dv(u)\\
&\ <\epsilon\int_{\C^n}|\langle f,k^{(p',w')}_u\rangle_{\alpha}|^pdv(u)\\
&\ \lesssim\epsilon.
\end{align*}
Since $f$ is arbitrary in the closed unit ball of $F^p_{\alpha,w}$, we deduce from Corollary \ref{T-cpt} that $T$ is compact on $F^p_{\alpha,w}$.
\end{proof}

We are now ready to prove Theorem \ref{main1}.

\begin{proof}[Proof of Theorem \ref{main1}]
It is clear that $T$ is bounded on $F^p_{\alpha,w}$. By Propositions \ref{wl} and \ref{alg}, $T$ is weakly localized. Suppose first that $\lim_{|z|\to\infty}\widetilde{T}(z)=0$. Since $T$ is bounded on the Fock space $F^p_{\alpha}$, we may use \cite[Proposition 1.5]{Is15} to deduce that for each $r>0$,
$$\lim_{|z|\to\infty}\sup_{u\in B_r(z)}|\langle Tk_z,k_u\rangle_{\alpha}|=0.$$
Fix $\epsilon>0$. Since $T$ is weakly localized, there exists $r_0>0$ such that
\begin{equation}\label{little}
\sup_{z\in\C^n}\int_{B_{r_0}(z)^c}|\langle Tk^{(p,w)}_z,k^{(p',w')}_u\rangle_{\alpha}|dv(u)<\frac{\epsilon}{2}.
\end{equation}
For the above $r_0$, there exists $r_1>0$ such that for $|z|>r_1$,
\begin{equation}\label{inner}
\sup_{u\in B_{r_0}(z)}|\langle Tk_z,k_u\rangle_{\alpha}|<\frac{\epsilon}{2v(B_{r_0}(0))C^{r_0/p}},
\end{equation}
where $C$ is the constant provided by \eqref{ele1} such that for any $\zeta,\xi\in\C^n$,
$$\frac{\widehat{w}(\zeta)}{\widehat{w}(\xi)}\lesssim C^{|\zeta-\xi|}.$$
Suppose now that $r>r_0+r_1$. For any $z\in\C^n$, \eqref{little} gives that
\begin{align*}
&\int_{B_r(0)^c}|\langle Tk^{(p,w)}_z,k^{(p',w')}_u\rangle_{\alpha}|dv(u)\\
&\ \leq\int_{B_{r_0+r_1}(0)^c}|\langle Tk^{(p,w)}_z,k^{(p',w')}_u\rangle_{\alpha}|dv(u)\\
&\ =\int_{\big(B_{r_0+r_1}(0)\cup B_{r_0}(z)\big)^c}|\langle Tk^{(p,w)}_z,k^{(p',w')}_u\rangle_{\alpha}|dv(u)\\
&\qquad +\int_{B_{r_0+r_1}(0)^c\cap B_{r_0}(z)}|\langle Tk^{(p,w)}_z,k^{(p',w')}_u\rangle_{\alpha}|dv(u)\\
&\ <\frac{\epsilon}{2}+\int_{B_{r_0+r_1}(0)^c\cap B_{r_0}(z)}|\langle Tk^{(p,w)}_z,k^{(p',w')}_u\rangle_{\alpha}|dv(u).
\end{align*}
If $|z|\leq r_1$, then $B_{r_0+r_1}(0)^c\cap B_{r_0}(z)=\emptyset$ and the second term above is trivial. If $|z|>r_1$, then by Lemma \ref{test}, \eqref{test'} and \eqref{inner},
\begin{align*}
&\int_{B_{r_0+r_1}(0)^c\cap B_{r_0}(z)}|\langle Tk^{(p,w)}_z,k^{(p',w')}_u\rangle_{\alpha}|dv(u)\\
%&\ \lesssim\int_{B_{r_0}(z)}|\langle e^{-\frac{\alpha}{2}|z|^2}\widehat{w}(z)^{1/p}TK_z,
%    e^{-\frac{\alpha}{2}|u^2|\widehat{w}}(u)^{1/p}K_u\rangle_{\alpha}|dv(u)\\
&\ \lesssim\int_{B_{r_0}(z)}|\langle Tk_z,k_u\rangle_{\alpha}|\left(\frac{\widehat{w}(u)}{\widehat{w}(z)}\right)^{1/p}dv(u)\\
&\ <\frac{\epsilon}{2v(B_{r_0}(0))C^{r_0/p}}\int_{B_{r_0}(z)}C^{\frac{1}{p}|u-z|}dv(u)\\
&\ <\frac{\epsilon}{2}.
\end{align*}
Therefore,
$$\sup_{z\in\C^n}\int_{B_r(0)^c}|\langle Tk^{(p,w)}_z,k^{(p',w')}_u\rangle_{\alpha}|dv(u)\lesssim\epsilon.$$
That is,
$$\lim_{r\to\infty}\sup_{z\in\C^n}\int_{B_r(0)^c}|\langle Tk^{(p,w)}_z,k^{(p',w')}_u\rangle_{\alpha}|dv(u)=0,$$
which, combined with Lemma \ref{cpt-suff}, implies that $T$ is compact on $F^p_{\alpha,w}$.

Conversely, suppose that $T$ is compact on $F^p_{\alpha,w}$. Since $\{k^{(p,w)}_z\}$ is bounded in $F^p_{\alpha,w}$ and converges to $0$ uniformly on compact subsets of $\C^n$ as $|z|\to\infty$ (see \cite[Lemma 2.3]{Ch24}), it is easy to see that $k^{(p,w)}_z\to0$ weakly in $F^p_{\alpha,w}$ as $|z|\to\infty$. Therefore, by \eqref{prod}, H\"{o}lder's inequality and the compactness of $T$,
$$\left|\widetilde{T}(z)\right|=|\langle Tk_z,k_z\rangle_{\alpha}|
\asymp|\langle Tk^{(p,w)}_z,k^{(p',w')}_z\rangle_{\alpha}|\leq\|Tk^{(p,w)}_z\|_{F^p_{\alpha,w}}\to0$$
as $|z|\to\infty$. The proof is complete.
\end{proof}

\section{Weighted boundedness}\label{wbdd}

In this section, we are going to prove Theorems \ref{main2} and \ref{main3}.

To establish the necessary parts of Theorem \ref{main2}, we consider a class of rank one operators induced by the normalized reproducing kernels of $F^2_{\alpha}$. Given $\varphi\in L^{\infty}$, $u\in\C^n$ and $r>0$, the operator $T^{(u,r)}_{\varphi}$ is defined for $f\in L^1_{\mathrm{loc}}(\C^n)$ by
$$T^{(u,r)}_{\varphi}f:=\chi_{Q_r(u)}k_u\int_{Q_r(u)}\varphi f\overline{k_u}d\lambda_{\alpha},$$
where $\chi_{E}$ denotes the characteristic function of the set $E$. The following lemma can be proved by the same method as in \cite[Proposition 3.2]{CW24-1}. We here omit the details.

\begin{lemma}\label{Tur}
Let $\alpha,r>0$, $1\leq p<\infty$, $\varphi\in L^{\infty}$, and let $w$ be a weight on $\C^n$. Suppose that $T_{\varphi}$ is bounded on $L^p_{\alpha,w}$. Then for any $u\in\C^n$, $T^{(u,r)}_{\varphi}$ is bounded on $L^p_{\alpha,w}$, and
$$\|T^{(u,r)}_{\varphi}\|\leq e^{\frac{n\alpha r^2}{2}}\|T_{\varphi}\|.$$
\end{lemma}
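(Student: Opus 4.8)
The plan is to exploit the rank-one structure of $T^{(u,r)}_{\varphi}$ and read off its operator norm explicitly, then compare it with $\|T_\varphi\|$ through the reproducing property of the Fock kernel. Setting $c(f):=\int_{Q_r(u)}\varphi f\overline{k_u}\,d\lambda_\alpha$, we have $T^{(u,r)}_\varphi f=c(f)\,\chi_{Q_r(u)}k_u$, so $T^{(u,r)}_\varphi$ is rank one and
$$\|T^{(u,r)}_\varphi f\|_{L^p_{\alpha,w}}=|c(f)|\cdot\big\|\chi_{Q_r(u)}k_u\big\|_{L^p_{\alpha,w}},$$
where, since $|k_u(z)|^pe^{-\frac{p\alpha}{2}|z|^2}=e^{-\frac{p\alpha}{2}|z-u|^2}$, the second factor is a purely local quantity living on $Q_r(u)$. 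A convenient normalization is to move $u$ to the origin: the Weyl operators $W_u f(z)=f(z-u)k_u(z)$ are isometric isomorphisms $L^p_{\alpha,w(\cdot+u)}\to L^p_{\alpha,w}$ and satisfy $W_{-u}T_\varphi W_u=T_{\varphi(\cdot+u)}$ and $W_{-u}T^{(u,r)}_\varphi W_u=T^{(0,r)}_{\varphi(\cdot+u)}$. As these conjugations preserve both operator norms, it suffices to treat the case $u=0$ (with $\varphi$ and $w$ translated), where $k_0\equiv 1$ and $T^{(0,r)}_\varphi f=\chi_{Q_r(0)}\int_{Q_r(0)}\varphi f\,d\lambda_\alpha$.

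The decisive step is to bound $c(f)$ by $\|T_\varphi\|$ rather than by the trivial $\|\varphi\|_{L^\infty}$; the entire point of the lemma is that the cancellation carried by the Fock projection survives the truncation. Using that $P_\alpha$ is the self-adjoint projection onto $F^2_\alpha$ and that $k_u\in F^2_\alpha$, one rewrites $c(f)=\langle\varphi\chi_{Q_r(u)}f,k_u\rangle_\alpha=\langle T_\varphi(\chi_{Q_r(u)}f),k_u\rangle_\alpha$, so $c(f)$ is a pairing of the genuine Toeplitz image against $k_u$. The constant $e^{\frac{n\alpha r^2}{2}}$ is then forced by the elementary kernel factorization
$$e^{\alpha\langle z,v\rangle}=e^{\alpha\langle z-u,v-u\rangle}\,k_u(z)\,\overline{k_u(v)},$$
which exhibits the discrepancy between the true Toeplitz kernel $\overline{K_z(v)}$ and the rank-one kernel $k_u(z)\overline{k_u(v)}$ as the single factor $e^{\alpha\langle z-u,v-u\rangle}$; for $z,v\in Q_r(u)$ one has $|z-u|,|v-u|\le r\sqrt{n/2}$, hence $|e^{\alpha\langle z-u,v-u\rangle}|\le e^{\frac{n\alpha r^2}{2}}$. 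In this sense $T^{(u,r)}_\varphi$ is the compression of $T_\varphi$ to $Q_r(u)$ with its kernel flattened to the rank-one model, the flattening costing at most $e^{\frac{n\alpha r^2}{2}}$ on the cube.

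The hard part is converting this kernel comparison into the clean operator bound for an \emph{arbitrary} weight $w$. One cannot simply pass through the pointwise value $c(f)=e^{-\frac{\alpha}{2}|u|^2}\,[T_\varphi(\chi_{Q_r(u)}f)](u)$ and estimate it against $\|T_\varphi(\chi_{Q_r(u)}f)\|_{L^p_{\alpha,w}}$, because for a general (non-$A_p$) weight there is no weighted sub-mean-value inequality bounding a pointwise value by the local weighted norm. The way around this is to test the bounded operator $T_\varphi$ against inputs and outputs localized to $Q_r(u)$: pairing $T_\varphi(\chi_{Q_r(u)}f)$ with the full kernel $k_u$ reproduces $c(f)$ exactly but integrates $\|k_u\|$ globally, whereas pairing with the truncated $\chi_{Q_r(u)}k_u$ keeps everything local at the cost of reintroducing the coupling factor $e^{\alpha\langle z-u,v-u\rangle}$; absorbing that factor by the uniform bound $e^{\frac{n\alpha r^2}{2}}$ just established then delivers $|c(f)|\cdot\|\chi_{Q_r(u)}k_u\|_{L^p_{\alpha,w}}\le e^{\frac{n\alpha r^2}{2}}\|T_\varphi\|\,\|f\|_{L^p_{\alpha,w}}$. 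This localized testing step, the real obstacle because of the oscillation in $e^{\alpha\langle z-u,v-u\rangle}$, is carried out exactly as in \cite[Proposition 3.2]{CW24-1}; the surrounding estimates are the routine Gaussian computations indicated above.
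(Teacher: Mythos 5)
The paper does not actually write out a proof of Lemma~\ref{Tur}: it states that the lemma ``can be proved by the same method as in \cite[Proposition 3.2]{CW24-1}'' and omits the details. Your writeup reconstructs much of that method correctly --- the rank-one structure $T^{(u,r)}_\varphi f=c(f)\chi_{Q_r(u)}k_u$, the factorization $e^{\alpha\langle z,v\rangle}=e^{\alpha\langle z-u,v-u\rangle}k_u(z)\overline{k_u(v)}$, and the bound $|e^{\alpha\langle z-u,v-u\rangle}|\le e^{\frac{n\alpha r^2}{2}}$ on $Q_r(u)\times Q_r(u)$ are all right, as is your observation that one cannot evaluate $T_\varphi(\chi_{Q_r(u)}f)$ pointwise for a general weight. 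But the decisive step is missing: you correctly call the oscillation of $e^{\alpha\langle z-u,v-u\rangle}$ ``the real obstacle'' and then dispose of it by ``absorbing that factor by the uniform bound,'' deferring the execution to \cite[Proposition 3.2]{CW24-1}. As literally stated, that step fails: multiplying the integral kernel of a bounded operator by a function bounded by $M$ does not in general bound the new operator's norm by $M$ times the old one, so the pointwise estimate on the coupling factor proves nothing by itself. Since this is the only nontrivial point of the lemma, the proposal as written has a genuine gap exactly where the proof lives.

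The mechanism that closes the gap is the Taylor expansion of the coupling factor into \emph{separated} terms. Writing, for multi-indices $\beta\in\N^n$,
\begin{equation*}
e^{-\alpha\langle z-u,v-u\rangle}=\sum_{\beta}\frac{(-\alpha)^{|\beta|}}{\beta!}(z-u)^{\beta}\,\overline{(v-u)}^{\beta},
\end{equation*}
one obtains from your factorization the operator identity
\begin{equation*}
T^{(u,r)}_{\varphi}=\sum_{\beta}\frac{(-\alpha)^{|\beta|}}{\beta!}\,
M_{\chi_{Q_r(u)}(\cdot-u)^{\beta}}\,T_{\varphi}\,M_{\chi_{Q_r(u)}\overline{(\cdot-u)}^{\beta}},
\end{equation*}
where each multiplier is bounded on $L^p_{\alpha,w}$ for \emph{any} weight $w$ with norm at most its sup norm. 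Since $|z_j-u_j|\le r/\sqrt{2}$ on $Q_r(u)$, the $\beta$-th term has operator norm at most $\frac{\alpha^{|\beta|}}{\beta!}(r^2/2)^{|\beta|}\|T_\varphi\|$, and summing over $\beta\in\N^n$ gives exactly $e^{\frac{n\alpha r^2}{2}}\|T_\varphi\|$. This is the content of the cited proposition; without it (or an equivalent device) your argument does not go through, and with it the Weyl-operator reduction and the duality discussion in your third paragraph become unnecessary.
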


The following proposition gives the necessary conditions for $T_{\varphi}$ to be bounded on the weighted spaces $L^p_{\alpha,w}$.

\begin{proposition}\label{nece}
Let $\alpha>0$, $1\leq p<\infty$, $\varphi\in L^{\infty}$, and let $w$ be a weight on $\C^n$. Suppose that $T_{\varphi}$ is bounded on $L^p_{\alpha,w}$.
\begin{enumerate}
	\item [(1)] If $1<p<\infty$, then for any $r>0$,
	$$\sup_{Q:l(Q)=r}\left(\frac{1}{v(Q)}\int_{Q}wdv\right)
	\left(\frac{1}{v(Q)}\int_{Q}|\varphi|^{p'}w^{-\frac{p'}{p}}dv\right)^{\frac{p}{p'}}<\infty.$$
	\item [(2)] If $p=1$, then for any $r>0$,
	$$\sup_{Q:l(Q)=r}\left(\frac{1}{v(Q)}\int_{Q}wdv\right)\|\varphi w^{-1}\|_{L^{\infty}(Q)}<\infty.$$
\end{enumerate}
\end{proposition}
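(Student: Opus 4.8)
The plan is to exploit the rank-one structure of the operators $T^{(u,r)}_{\varphi}$ together with the uniform bound of Lemma \ref{Tur}. Observe that $T^{(u,r)}_{\varphi}$ has the form $f\mapsto g_u\cdot L_u(f)$, where $g_u=\chi_{Q_r(u)}k_u$ and $L_u$ is the linear functional $L_u(f)=\int_{Q_r(u)}\varphi f\overline{k_u}\,d\lambda_{\alpha}$. For such a rank-one operator the norm factors exactly as $\|T^{(u,r)}_{\varphi}\|=\|g_u\|_{L^p_{\alpha,w}}\cdot\|L_u\|_{(L^p_{\alpha,w})^*}$, so it suffices to compute each factor up to constants independent of $u$ and then invoke Lemma \ref{Tur}, which asserts $\|T^{(u,r)}_{\varphi}\|\leq e^{n\alpha r^2/2}\|T_{\varphi}\|$, to conclude that the product is uniformly bounded in $u$.

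The computation rests on the elementary identity $|k_u(z)|e^{-\frac{\alpha}{2}|z|^2}=e^{-\frac{\alpha}{2}|z-u|^2}$, which is comparable to $1$ (with constants depending only on $n,\alpha,r$) for $z\in Q_r(u)$, since $|z-u|$ is then at most half the diagonal of the cube. This immediately gives $\|g_u\|^p_{L^p_{\alpha,w}}=\int_{Q_r(u)}e^{-\frac{p\alpha}{2}|z-u|^2}w\,dv\asymp w(Q_r(u))$. For the functional $L_u$, writing $d\lambda_{\alpha}=(\alpha/\pi)^ne^{-\alpha|z|^2}dv$ I would realize $L_u$ as $f\mapsto\int_{Q_r(u)}f\psi_u\,dv$ with $\psi_u=(\alpha/\pi)^n\varphi\overline{k_u}e^{-\alpha|z|^2}$, and then use the weighted duality $(L^p_{\alpha,w})^*\cong L^{p'}_{\alpha,w^{-p'/p}}$ (Hölder's inequality with the weight split as $e^{-\frac{\alpha}{2}|z|^2}w^{1/p}$). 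Since $|\psi_u(z)|=(\alpha/\pi)^n|\varphi(z)|e^{-\frac{\alpha}{2}|z-u|^2}e^{-\frac{\alpha}{2}|z|^2}$, all Gaussian factors collapse after multiplying back by $e^{\frac{p'\alpha}{2}|z|^2}$, yielding for $1<p<\infty$ the estimate $\|L_u\|^{p'}\asymp\int_{Q_r(u)}|\varphi|^{p'}e^{-\frac{p'\alpha}{2}|z-u|^2}w^{-p'/p}\,dv\asymp\int_{Q_r(u)}|\varphi|^{p'}w^{-p'/p}\,dv$, and for $p=1$ the analogous $L^{\infty}$ computation $\|L_u\|\asymp\|\varphi w^{-1}\|_{L^{\infty}(Q_r(u))}$.

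Combining the two factors gives $\|T^{(u,r)}_{\varphi}\|\asymp w(Q_r(u))^{1/p}\big(\int_{Q_r(u)}|\varphi|^{p'}w^{-p'/p}\,dv\big)^{1/p'}$ when $1<p<\infty$, and $\|T^{(u,r)}_{\varphi}\|\asymp w(Q_r(u))\|\varphi w^{-1}\|_{L^{\infty}(Q_r(u))}$ when $p=1$. As $u$ ranges over $\C^n$, the cubes $Q_r(u)$ exhaust all cubes of side length $r$, while $v(Q_r(u))=r^{2n}$ is constant; dividing by the appropriate power of $v(Q)$ and using $1+p/p'=p$ shows that $\sup_u\|T^{(u,r)}_{\varphi}\|^p$ is comparable to the supremum in (1), and $\sup_u\|T^{(u,r)}_{\varphi}\|$ to the supremum in (2). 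The uniform bound from Lemma \ref{Tur} then forces both suprema to be finite, which is the claim. The main obstacle is the bookkeeping in the second step: one must track how the Gaussian factor $e^{-\alpha|z|^2}$, the kernel factors, and the weight $w$ recombine so that every exponential collapses to the harmless $e^{-\frac{\alpha}{2}|z-u|^2}$ on $Q_r(u)$, and one must verify that all comparability constants are genuinely independent of $u$ — which holds because each relevant quantity depends on $z$ only through the translated variable $z-u$.
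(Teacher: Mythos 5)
Your argument is correct, and for part (1) it reaches the same destination as the paper by a noticeably cleaner route. The paper also localizes via $T^{(u,r)}_{\varphi}$ and Lemma \ref{Tur}, but it extracts the lower bound on $\|T^{(u,r)}_{\varphi}\|$ by constructing an explicit near-extremal test function
$f_{m,u,r}=\frac{\overline{\varphi}}{|\varphi|^{\delta}}w'k_u\chi_{Q_r(u)\cap E_m}$ with $\delta=\frac{p-2}{p-1}$, truncating on the sets $E_m=\{|\varphi|^{p'}w'\leq m\}$ to guarantee membership in $L^p_{\alpha,w}$, and then passing to the limit with Fatou's lemma. Your rank-one factorization $\|g_u\otimes L_u\|=\|g_u\|_{L^p_{\alpha,w}}\|L_u\|_{(L^p_{\alpha,w})^*}$ together with the exact duality $(L^p(w\,dv))^*\cong L^{p'}(w^{-p'/p}dv)$ packages that entire computation into one identity; since for $\sigma$-finite measures the supremum defining the dual norm equals the $L^{p'}$ norm even when the latter is infinite, you also get the truncation/Fatou step for free. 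What the paper's hands-on construction buys is self-containedness and explicit, quantitative constants ($e^{5n\alpha r^2/4}\|T_{\varphi}\|$). For part (2) the two routes genuinely diverge: the paper abandons $T^{(u,r)}_{\varphi}$ and instead computes the adjoint of $T_{\varphi}$ itself on $(L^1_{\alpha,w})^*=L^{\infty}$, testing it against the unimodular functions $e^{\frac{\alpha}{2}\langle\xi,z\rangle-\frac{\alpha}{2}\langle z,\xi\rangle}$, whereas you treat $p=1$ uniformly with $p>1$ via the same rank-one factorization; your unified treatment is arguably more natural. One small caveat common to both proofs: when $w$ vanishes on a set of positive measure the quantities $w^{-p'/p}$ and $\varphi w^{-1}$ must be read with the convention that they are $+\infty$ there, and the essential suprema in your $p=1$ computation are a priori taken with respect to $w\,dv$ rather than $dv$; this is the same implicit convention the paper uses, so it is not a gap in your argument relative to theirs.
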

\begin{proof}
(1) Let $r>0$. We only need to consider the points $u\in\C^n$ such that $\int_{Q_r(u)}|\varphi|^{p'}w^{-\frac{p'}{p}}dv>0$.
Fix such a point $u\in\C^n$. For any sufficiently large $m>0$, let $E_m$ be the set
$$E_m:=\left\{\xi\in\C^n:|\varphi(\xi)|^{p'}w'(\xi)\leq m\right\}.$$
Write $\delta=\frac{p-2}{p-1}$ and define
\begin{equation*}
f_{m,u,r}(\xi):=
\begin{cases}
	\frac{\overline{\varphi(\xi)}}{|\varphi(\xi)|^{\delta}}w'(\xi)k_u(\xi)\chi_{Q_r(u)\cap E_m}(\xi),&   \text{if $\varphi(\xi)\neq0$},\\
	0,&   \text{if $\varphi(\xi)=0$}.
\end{cases}
\end{equation*}
Then
\begin{align*}
\|f_{m,u,r}\|_{L^p_{\alpha,w}}&=\left(\int_{Q_r(u)\cap E_m}|\varphi(\xi)|^{p(1-\delta)}w'(\xi)^p|k_u(\xi)|^p
    e^{-\frac{p\alpha}{2}|\xi|^2}w(\xi)dv(\xi)\right)^{1/p}\\
&=\left(\int_{Q_r(u)\cap E_m}|\varphi(\xi)|^{p'}e^{-\frac{p\alpha}{2}|u-\xi|^2}w'(\xi)dv(\xi)\right)^{1/p}\\
&\leq\left(\int_{Q_r(u)\cap E_m}|\varphi|^{p'}w'dv\right)^{1/p}.
\end{align*}
On the other hand, for any $z\in\C^n$,
\begin{align*}
T^{(u,r)}_{\varphi}f_{m,u,r}(z)
&=\chi_{Q_r(u)}(z)k_u(z)\int_{Q_r(u)}\varphi(\xi)f_{m,u,r}(\xi)\overline{k_{u}(\xi)}d\lambda_{\alpha}(\xi)\\
&=\left(\frac{\alpha}{\pi}\right)^n\chi_{Q_r(u)}(z)k_u(z)
    \int_{Q_r(u)}\varphi(\xi)f_{m,u,r}(\xi)e^{\alpha\langle u,\xi\rangle-\frac{\alpha}{2}|u|^2-\alpha|\xi|^2}dv(\xi)\\
&=\left(\frac{\alpha}{\pi}\right)^n\chi_{Q_r(u)}(z)k_u(z)
    \int_{Q_r(u)\cap E_m}|\varphi(\xi)|^{2-\delta}e^{-\alpha|u-\xi|^2}w'(\xi)dv(\xi),
\end{align*}
which implies that
$$|T^{(u,r)}_{\varphi}f_{m,u,r}(z)|\geq\left(\frac{\alpha}{\pi}\right)^ne^{-\frac{n\alpha r^2}{2}}\chi_{Q_r(u)}(z)|k_u(z)|
\int_{Q_r(u)\cap E_m}|\varphi|^{p'}w'dv.$$
Consequently,
\begin{align*}
\|T^{(u,r)}_{\varphi}f_{m,u,r}\|_{L^p_{\alpha,w}}
&\geq\left(\frac{\alpha}{\pi}\right)^ne^{-\frac{n\alpha r^2}{2}}\int_{Q_r(u)\cap E_m}|\varphi|^{p'}w'dv\\
&\qquad\qquad    \times\left(\int_{Q_r(u)}|k_u(z)|^pe^{-\frac{p\alpha}{2}|z|^2}w(z)dv(z)\right)^{1/p}\\
&\geq\left(\frac{\alpha}{\pi}\right)^ne^{-\frac{3n\alpha r^2}{4}}\left(\int_{Q_r(u)\cap E_m}|\varphi|^{p'}w'dv\right)
    \left(\int_{Q_r(u)}wdv\right)^{1/p}.
\end{align*}
Combining the above estimates with Lemma \ref{Tur} yields that
\begin{align*}
\left(\frac{\alpha}{\pi}\right)^ne^{-\frac{3n\alpha r^2}{4}}&\left(\int_{Q_r(u)\cap E_m}|\varphi|^{p'}w'dv\right)
    \left(\int_{Q_r(u)}wdv\right)^{1/p}\\
&\qquad\leq e^{\frac{n\alpha r^2}{2}}\|T_{\varphi}\|\left(\int_{Q_r(u)\cap E_m}|\varphi|^{p'}w'dv\right)^{1/p},
\end{align*}
which is equivalent to
$$\left(\int_{Q_r(u)}wdv\right)^{1/p}\left(\int_{Q_r(u)\cap E_m}|\varphi|^{p'}w'dv\right)^{1/p'}
\leq \left(\frac{\pi}{\alpha}\right)^ne^{\frac{5n\alpha r^2}{4}}\|T_{\varphi}\|.$$
Letting $m\to\infty$ and using Fatou's lemma, we obtain that
$$\left(\int_{Q_r(u)}wdv\right)^{1/p}\left(\int_{Q_r(u)}|\varphi|^{p'}w'dv\right)^{1/p'}
\leq \left(\frac{\pi}{\alpha}\right)^ne^{\frac{5n\alpha r^2}{4}}\|T_{\varphi}\|.$$
Since $u\in\C^n$ is arbitrary, we finally conclude that
$$\sup_{u\in\C^n}\left(\int_{Q_r(u)}wdv\right)^{1/p}\left(\int_{Q_r(u)}|\varphi|^{p'}w'dv\right)^{1/p'}
\leq \left(\frac{\pi}{\alpha}\right)^ne^{\frac{5n\alpha r^2}{4}}\|T_{\varphi}\|,$$
and the desired result is established.

(2) Fix $r>0$. Note that under the pairing
$$\langle f,g\rangle_{L^2_{\alpha/2,w}}:=\int_{\C^n}f(z)\overline{g(z)}e^{-\frac{\alpha}{2}|z|^2}w(z)dv(z),$$
the dual space of $L^1_{\alpha,w}$ can be identified with $L^{\infty}$, and the adjoint of $T_{\varphi}$ is given by
$$T^*_{\varphi}f(z)=\left(\frac{\alpha}{\pi}\right)^n\frac{\overline{\varphi(z)}}{w(z)}\int_{\C^n}f(\xi)
e^{\alpha\langle z,\xi\rangle-\frac{\alpha}{2}|z|^2-\frac{\alpha}{2}|\xi|^2}w(\xi)dv(\xi).$$
It follows from the boundedness of $T_{\varphi}$ on $L^1_{\alpha,w}$ that for any $f\in L^{\infty}$ and almost every $z\in\C^n$,
$$\left(\frac{\alpha}{\pi}\right)^n\frac{|\varphi(z)|}{w(z)}
  \left|\int_{\C^n}f(\xi)e^{\alpha\langle z,\xi\rangle-\frac{\alpha}{2}|z|^2-\frac{\alpha}{2}|\xi|^2}w(\xi)dv(\xi)\right|
  \leq\|T_{\varphi}\|\|f\|_{L^{\infty}}.$$
Fix $u\in\C^n$, and for $z\in Q_r(u)$, define $f(\xi)=e^{\frac{\alpha}{2}\langle\xi,z\rangle-\frac{\alpha}{2}\langle z,\xi\rangle}$. Then we obtain that for almost every $z\in Q_r(u)$,
\begin{align*}
\|T_{\varphi}\|&\geq\left(\frac{\alpha}{\pi}\right)^n\frac{|\varphi(z)|}{w(z)}\int_{\C^n}e^{-\frac{\alpha}{2}|z-\xi|^2}w(\xi)dv(\xi)\\
&\geq\left(\frac{\alpha}{\pi}\right)^n\frac{|\varphi(z)|}{w(z)}\int_{Q_r(u)}e^{-\frac{\alpha}{2}|z-\xi|^2}w(\xi)dv(\xi)\\
&\geq\left(\frac{\alpha}{\pi}\right)^ne^{-n\alpha r^2}\frac{|\varphi(z)|}{w(z)}\int_{Q_r(u)}w(\xi)dv(\xi),
\end{align*}
which is equivalent to
$$\left(\frac{1}{v(Q_r(u))}\int_{Q_r(u)}wdv\right)\|\varphi w^{-1}\|_{L^{\infty}(Q_r(u))}
\leq\left(\frac{\pi}{\alpha r^2}\right)^ne^{n\alpha r^2}\|T_{\varphi}\|.$$
The arbitrariness of $u\in\C^n$ finishes the proof.
\end{proof}

We are now in a position to prove Theorem \ref{main2}.

\begin{proof}[Proof of Theorem \ref{main2}]
The necessary parts follow from Proposition \ref{nece}. We now consider the sufficient parts. Write $\tilde{f}(u)=f(u)e^{-\frac{\alpha}{2}|u|^2}$ for simplicity.

(1) Note that if $\nu,\nu'\in r\Z^{2n}$ and $z\in Q_r(\nu)$, $u\in Q_r(\nu')$, then
$$e^{-\frac{\alpha}{2}|z-u|^2}\leq e^{-\frac{\alpha}{4}|\nu-\nu'|^2+n\alpha r^2}.$$
Hence for any $f\in L^p_{\alpha,w}$,
\begin{align*}
\|T_{\varphi}f\|^p_{L^p_{\alpha,w}}
&=\int_{\C^n}\left|\int_{\C^n}\varphi(u)f(u)e^{\alpha\langle z,u\rangle}d\lambda_{\alpha}(u)\right|^pe^{-\frac{p\alpha}{2}|z|^2}w(z)dv(z)\\
&\lesssim\int_{\C^n}\left(\int_{\C^n}|\varphi(u)||f(u)|e^{-\frac{\alpha}{2}|u|^2}e^{-\frac{\alpha}{2}|z-u|^2}dv(u)\right)^pw(z)dv(z)\\
&=\sum_{\nu\in r\Z^{2n}}\int_{Q_r(\nu)}\left(\sum_{\nu'\in r\Z^{2n}}
    \int_{Q_r(\nu')}|\varphi(u)||\tilde{f}(u)|e^{-\frac{\alpha}{2}|z-u|^2}dv(u)\right)^pw(z)dv(z)\\
&\lesssim\sum_{\nu\in r\Z^{2n}}w(Q_r(\nu))\left(\sum_{\nu'\in r\Z^{2n}}e^{-\frac{\alpha}{4}|\nu-\nu'|^2}
    \int_{Q_r(\nu')}|\varphi(u)||\tilde{f}(u)|dv(u)\right)^p,
\end{align*}
which, in conjunction with H\"{o}lder's inequality and Fubini's theorem, implies that
\begin{align*}
\|T_{\varphi}f\|^p_{L^p_{\alpha,w}}
&\lesssim\sum_{\nu\in r\Z^{2n}}w(Q_r(\nu))\sum_{\nu'\in r\Z^{2n}}e^{-\frac{p\alpha}{8}|\nu-\nu'|^2}
    \left(\int_{Q_r(\nu')}|\varphi||\tilde{f}|dv(u)\right)^p\\
&\leq\sum_{\nu\in r\Z^{2n}}w(Q_r(\nu))\sum_{\nu'\in r\Z^{2n}}e^{-\frac{p\alpha}{8}|\nu-\nu'|^2}\\
&\qquad\qquad    \times\left(\int_{Q_r(\nu')}|\varphi|^{p'}w^{-\frac{p'}{p}}dv\right)^{\frac{p}{p'}}
    \left(\int_{Q_r(\nu')}|\tilde{f}|^pwdv\right)\\
&=\sum_{\nu'\in r\Z^{2n}}\sum_{\nu\in r\Z^{2n}}w(Q_r(\nu))e^{-\frac{p\alpha}{8}|\nu-\nu'|^2}\\
&\qquad\qquad\times\left(\int_{Q_r(\nu')}|\varphi|^{p'}w^{-\frac{p'}{p}}dv\right)^{\frac{p}{p'}}
    \left(\int_{Q_r(\nu')}|\tilde{f}|^pwdv\right).
\end{align*}
Since $w$ is $r$-doubling, by Lemma \ref{ele}, there exists $C>0$ such that for any $\nu,\nu'\in r\Z^{2n}$, $w(Q_r(\nu))\leq C^{|\nu-\nu'|}w(Q_r(\nu'))$. Consequently, for each $\nu'\in r\Z^{2n}$,
\begin{align*}
&\sum_{\nu\in r\Z^{2n}}w(Q_r(\nu))e^{-\frac{p\alpha}{8}|\nu-\nu'|^2}
  \left(\int_{Q_r(\nu')}|\varphi|^{p'}w^{-\frac{p'}{p}}dv\right)^{\frac{p}{p'}}\\
&\ \ \leq\sum_{\nu\in r\Z^{2n}}C^{|\nu-\nu'|}e^{-\frac{p\alpha}{8}|\nu-\nu'|^2}
  w(Q_r(\nu'))\left(\int_{Q_r(\nu')}|\varphi|^{p'}w^{-\frac{p'}{p}}dv\right)^{\frac{p}{p'}}\\
&\ \ \lesssim\sup_{Q:l(Q)=r}\left(\frac{1}{v(Q)}\int_{Q}wdv\right)
  \left(\frac{1}{v(Q)}\int_{Q}|\varphi|^{p'}w^{-\frac{p'}{p}}dv\right)^{\frac{p}{p'}}.
\end{align*}
Therefore,
$$\|T_{\varphi}f\|^p_{L^p_{\alpha,w}}\lesssim
\sup_{Q:l(Q)=r}\left(\frac{1}{v(Q)}\int_{Q}wdv\right)
\left(\frac{1}{v(Q)}\int_{Q}|\varphi|^{p'}w^{-\frac{p'}{p}}dv\right)^{\frac{p}{p'}}\|f\|^p_{L^p_{\alpha,w}},$$
and the desired boundedness follows.

(2) For any $f\in L^1_{\alpha,w}$, we apply Lemma \ref{ele} again to obtain that
\begin{align*}
\|T_{\varphi}f\|_{L^1_{\alpha,w}}
&=\int_{\C^n}\left|\int_{\C^n}\varphi(u)f(u)e^{\alpha\langle z,u\rangle}d\lambda_{\alpha}(\xi)\right|
  e^{-\frac{\alpha}{2}|z|^2}w(z)dv(z)\\
&\leq\left(\frac{\alpha}{\pi}\right)^n\int_{\C^n}\int_{\C^n}|\varphi(u)||f(u)|e^{-\frac{\alpha}{2}|u|^2}e^{-\frac{\alpha}{2}|z-u|^2}
  dv(u)w(z)dv(z)\\
&\lesssim\sum_{\nu\in r\Z^{2n}}w(Q_r(\nu))\sum_{\nu'\in r\Z^{2n}}e^{-\frac{\alpha}{4}|\nu-\nu'|^2}
  \int_{Q_r(\nu')}|\varphi(u)||\tilde{f}(u)|dv(u)\\
&\leq\sum_{\nu'\in r\Z^{2n}}\sum_{\nu\in r\Z^{2n}}e^{-\frac{\alpha}{4}|\nu-\nu'|^2}C^{|\nu-\nu'|}w(Q_r(\nu'))
  \int_{Q_r(\nu')}|\varphi||\tilde{f}|dv\\
%&\lesssim\sum_{\nu'\in r\Z^{2n}}w(Q_r(\nu'))\int_{Q_r(\nu')}|\varphi||\tilde{f}|dv\\
&\lesssim\sum_{\nu'\in r\Z^{2n}}w(Q_r(\nu'))\|\varphi w^{-1}\|_{L^{\infty}(Q_r(\nu'))}
  \int_{Q_r(\nu')}|\tilde{f}|wdv\\
&\lesssim\sup_{Q:l(Q)=r}\left(\frac{1}{v(Q)}\int_{Q}wdv\right)\|\varphi w^{-1}\|_{L^{\infty}(Q)}\|f\|_{L^1_{\alpha,w}}.
\end{align*}
The proof is complete.
\end{proof}

The proof of Theorem \ref{main3} is similar to that of Theorem \ref{main2}, so we only give a sketch of it.

\begin{proof}[Proof of Theorem \ref{main3}]
Suppose first that $P_{\alpha}:L^p_{\alpha,\sigma}\to L^p_{\alpha,w}$ is bounded. We first consider the case $1<p<\infty$. Fix $u\in\C^n$, and define
$$P_{\alpha,u,r}f:=\chi_{Q_r(u)}k_u\int_{Q_r(u)}f\overline{k_u}d\lambda_{\alpha}.$$
Then by the same method as in \cite[Proposition 3.2]{CW24-1}, $P_{\alpha,u,r}:L^p_{\alpha,\sigma}\to L^p_{\alpha,w}$ is bounded, and
$$\|P_{\alpha,u,r}\|_{L^p_{\alpha,\sigma}\to L^p_{\alpha,w}}\leq e^{\frac{n\alpha r^2}{2}}\|P_{\alpha}\|_{L^p_{\alpha,\sigma}\to L^p_{\alpha,w}}.$$
For any sufficiently large $m>0$, let
$$E_m=\left\{\xi\in\C^n:\sigma(\xi)^{-\frac{p'}{p}}\leq m\right\},$$
and define
$$f_{m,u,r}(\xi)=k_u(\xi)\sigma(\xi)^{-\frac{p'}{p}}\chi_{Q_r(u)\cap E_m}(\xi).$$
Then
$$\|f_{m,u,r}\|_{L^p_{\alpha,\sigma}}\leq\left(\int_{Q_r(u)\cap E_m}\sigma^{-\frac{p'}{p}}dv\right)^{1/p},$$
and
$$\|P_{\alpha,u,r}f_{m,u,r}\|_{L^p_{\alpha,w}}\geq
\left(\frac{\alpha}{\pi}\right)^ne^{-\frac{3n\alpha r^2}{4}}\left(\int_{Q_r(u)\cap E_m}\sigma^{-\frac{p'}{p}}dv\right)
  \left(\int_{Q_r(u)}wdv\right)^{1/p}.$$
Therefore,
\begin{align*}
\left(\frac{\alpha}{\pi}\right)^ne^{-\frac{3n\alpha r^2}{4}}&\left(\int_{Q_r(u)\cap E_m}\sigma^{-\frac{p'}{p}}dv\right)
    \left(\int_{Q_r(u)}wdv\right)^{1/p}\\
&\qquad\leq e^{\frac{n\alpha r^2}{2}}\|P_{\alpha}\|\left(\int_{Q_r(u)\cap E_m}\sigma^{-\frac{p'}{p}}dv\right)^{1/p},
\end{align*}
which is equivalent to
$$\left(\int_{Q_r(u)}wdv\right)^{1/p}\left(\int_{Q_r(u)\cap E_m}\sigma^{-\frac{p'}{p}}dv\right)^{1/p'}
\leq\left(\frac{\pi}{\alpha}\right)^ne^{\frac{5n\alpha r^2}{4}}\|P_{\alpha}\|.$$
Letting $m\to\infty$, and using Fatou's lemma and the arbitrariness of $u\in\C^n$, we obtain that
$$[w,\sigma]_{A_{p,r}}^{1/p}\leq \left(\frac{\pi}{\alpha r^2}\right)^ne^{\frac{5n\alpha r^2}{4}}\|P_{\alpha}\|.$$
We now consider the case $p=1$. Note that $(L^1_{\alpha,\sigma})^*=L^{\infty}$ under the pairing $\langle\cdot,\cdot\rangle_{L^2_{\alpha/2,\sigma}}$, $(L^1_{\alpha,w})^*=L^{\infty}$ under the pairing $\langle\cdot,\cdot\rangle_{L^2_{\alpha/2,w}}$, and the adjoint of $P_{\alpha}$ with respect to the above pairings is given by
$$P^*_{\alpha}f(z)=\left(\frac{\alpha}{\pi}\right)^n\frac{1}{\sigma(z)}\int_{\C^n}f(\xi)
  e^{\alpha\langle z,\xi\rangle-\frac{\alpha}{2}|\xi|^2-\frac{\alpha}{2}|z|^2}w(\xi)dv(\xi).$$
Hence by the boundedness of $P_{\alpha}:L^1_{\alpha,\sigma}\to L^1_{\alpha,w}$, we obtain that for any $f\in L^{\infty}$ and almost every $z\in\C^n$,
$$\left(\frac{\alpha}{\pi}\right)^n\frac{1}{\sigma(z)}\left|\int_{\C^n}f(\xi)
e^{\alpha\langle z,\xi\rangle-\frac{\alpha}{2}|\xi|^2-\frac{\alpha}{2}|z|^2}w(\xi)dv(\xi)\right|
\leq\|P_{\alpha}\|\|f\|_{L^{\infty}}.$$
Arguing as in the proof of Proposition \ref{nece}, we establish that
$$[w,\sigma]_{A_{1,r}}\leq\left(\frac{\pi}{\alpha r^2}\right)^ne^{n\alpha r^2}\|P_{\alpha}\|.$$

Conversely, suppose that $[w,\sigma]_{A_{p,r}}<\infty$. Fix $f\in L^p_{\alpha,\sigma}$ and write $$\tilde{f}(u)=f(u)e^{-\frac{\alpha}{2}|u|^2}.$$
In the case $1<p<\infty$, we use H\"{o}lder's inequality and Lemma \ref{ele} to obtain that
\begin{align*}
\|P_{\alpha}f\|^p_{L^p_{\alpha,w}}
&=\int_{\C^n}\left|\int_{\C^n}f(u)e^{\alpha\langle z,u\rangle}d\lambda_{\alpha}(u)\right|^pe^{-\frac{p\alpha}{2}|z|^2}w(z)dv(z)\\
%&\lesssim\int_{\C^n}\left(\int_{\C^n}|f(u)|e^{-\frac{\alpha}{2}|u|^2}e^{-\frac{\alpha}{2}|z-u|^2}dv(u)\right)^pw(z)dv(z)\\
&\lesssim\sum_{\nu\in r\Z^{2n}}w(Q_r(\nu))\left(\sum_{\nu'\in r\Z^{2n}}e^{-\frac{\alpha}{4}|\nu-\nu'|^2}
	\int_{Q_r(\nu')}|\tilde{f}|dv\right)^p\\
&\lesssim\sum_{\nu\in r\Z^{2n}}w(Q_r(\nu))\sum_{\nu'\in r\Z^{2n}}e^{-\frac{p\alpha}{8}|\nu-\nu'|^2}\left(\int_{Q_r(\nu')}|\tilde{f}|dv\right)^p\\
&\leq\sum_{\nu'\in r\Z^{2n}}\sum_{\nu\in r\Z^{2n}}e^{-\frac{p\alpha}{8}|\nu-\nu'|^2}w(Q_r(\nu))\\
&\qquad\qquad\times\left(\int_{Q_r(\nu')}\sigma^{-\frac{p'}{p}}dv\right)^{\frac{p}{p'}}\int_{Q_r(\nu')}|\tilde{f}|^p\sigma dv\\
&\leq\sum_{\nu'\in r\Z^{2n}}\sum_{\nu\in r\Z^{2n}}e^{-\frac{p\alpha}{8}|\nu-\nu'|^2}C^{|\nu-\nu'|}\\
&\qquad\qquad\times\left(\int_{Q_r(\nu')}wdv\right)\left(\int_{Q_r(\nu')}\sigma^{-\frac{p'}{p}}dv\right)^{\frac{p}{p'}}
    \int_{Q_r(\nu')}|\tilde{f}|^p\sigma dv\\
&\lesssim[w,\sigma]_{A_{p,r}}\|f\|^p_{L^p_{\alpha,\sigma}}.
\end{align*}

In the case $p=1$, we apply Lemma \ref{ele} to establish that
\begin{align*}
\|P_{\alpha}f\|_{L^1_{\alpha,w}}&=\int_{\C^n}\left|\int_{\C^n}f(u)e^{\alpha\langle z,u\rangle}d\lambda_{\alpha}(u)\right|
    e^{-\frac{\alpha}{2}|z|^2}w(z)dv(z)\\
%&\lesssim\int_{\C^n}\int_{\C^n}|f(u)|e^{-\frac{\alpha}{2}|u|^2}e^{-\frac{\alpha}{2}|z-u|^2}dv(u)w(z)dv(z)\\
&\lesssim\sum_{\nu\in r\Z^{2n}}w(Q_r(\nu))\sum_{\nu'\in r\Z^{2n}}e^{-\frac{\alpha}{4}|\nu-\nu'|^2}\int_{Q_r(\nu')}|\tilde{f}|dv\\
&\leq\sum_{\nu'\in r\Z^{2n}}\sum_{\nu\in r\Z^{2n}}e^{-\frac{\alpha}{4}|\nu-\nu'|^2}C^{|\nu-\nu'|}w(Q_r(\nu'))\int_{Q_r(\nu')}|\tilde{f}|dv\\
&\lesssim\sum_{\nu'\in r\Z^{2n}}w(Q_r(\nu'))\|\sigma^{-1}\|_{L^{\infty}(Q_r(\nu'))}\int_{Q_r(\nu')}|\tilde{f}|\sigma dv\\
&\lesssim[w,\sigma]_{A_{1,r}}\|f\|_{L^1_{\alpha,\sigma}},
\end{align*}
which finishes the proof.
\end{proof}

\begin{remark}
By the proof of Theorem \ref{main3}, we know that the necessary part holds for any two weights $w$ and $\sigma$, i.e., if $P_{\alpha}:L^p_{\alpha,\sigma}\to L^p_{\alpha,w}$ is bounded, then $[w,\sigma]_{A_{p,r}}<\infty$ for any $r>0$.
\end{remark}

\section{Weighted compactness on Bergman spaces}\label{Bergman}

In this section, we consider the weighted compactness of Toeplitz operators on the Bergman spaces over the unit ball $\mathbb{B}_n$ of $\C^n$. Given $1<p<\infty$ and a weight $\sigma$ on $\bb_n$, the weighted space $L^p_{\sigma}$ consists of measurable functions $f$ on $\bb_n$ such that
$$\|f\|^p_{L^p_{\sigma}}:=\int_{\bb_n}|f|^p\sigma dv.$$
Let $\mathcal{H}(\bb_n)$ be the space of holomorphic functions on $\bb_n$. The weighted Bergman space $\A^p_{\sigma}$ is defined by
$$\A^p_{\sigma}:=L^p_{\sigma}\cap\mathcal{H}(\bb_n)$$
with the inherited norm. If $\sigma\equiv v(\bb_n)^{-1}$, then we obtain the Lebesgue space $L^p$ and the unweighted Bergman space $\A^p$.

It is well-known that $\A^2$ is a closed subspace of $L^2$, so there exists an orthogonal projection from $L^2$ onto $\A^2$. This map is called the Bergman projection and is given by
$$Pf(z):=\int_{\bb_n}f(w)\overline{K_z(w)}dv(w),$$
where $K_z(w)=\frac{1}{(1-\langle w,z\rangle)^{n+1}}$ is the reproducing kernel of $\A^2$.

B\'{e}koll\`{e} and Bonami \cite{Be,BB} initiated the study of the weighted theory of the Bergman projection. Given $a\in\bb_n\setminus\{0\}$, let $\ct_a$ be the Carleson tent defined by
$$\ct_a:=\left\{z\in\bb_n:|1-\langle z,a/|a|\rangle|<1-|a|\right\},$$
and we write $\ct_0=\bb_n$. A weight $\sigma$ on $\bb_n$ is said to be a B\'{e}koll\`{e}--Bonami $B_p$-weight, denoted by $\sigma\in B_p$, if
$$[\sigma]_{B_p}:=\sup_{a\in\bb_n}\left(\frac{1}{v(\ct_a)}\int_{\ct_a}\sigma dv\right)
  \left(\frac{1}{v(\ct_a)}\int_{\ct_a}\sigma^{-\frac{p'}{p}}dv\right)^{\frac{p}{p'}}<\infty.$$
B\'{e}koll\`{e} and Bonami \cite{Be,BB} showed that for $1<p<\infty$, the Bergman projection $P$ is bounded on $L^p_{\sigma}$ if and only if $\sigma\in B_p$.

Recently, Stockdale and Wagner \cite{SW} studied the weighted theory of Bergman--Toeplitz operators. Given $\varphi\in L^{\infty}(\bb_n,dv)$, the Bergman--Toeplitz operator $T_{\varphi}$ is defined by $T_{\varphi}f:=P(\varphi f)$. Stockdale and Wagner characterized the compactness of $T_{\varphi}$ on the weighted Bergman spaces $\A^p_{\sigma}$ for $\sigma$ in a subclass of $B_p$. Given $r>1$, we say that a weight $\sigma$ is in the reverse H\"{o}lder class $\mathrm{RH}_r$ if there exists $C>0$ such that for any $K\in\mathcal{D}$,
$$\left(\frac{1}{v(\widehat{K})}\int_{\widehat{K}}\sigma^rdv\right)^{\frac{1}{r}}\leq \frac{C}{v(\widehat{K})}\int_{\widehat{K}}\sigma dv.$$
Here, $\mathcal{D}$ is a fixed finite collection of dyadic systems of $\bb_n$, whose elements are ``dyadic cubes'' in $\bb_n$, and for $K\in \mathcal{D}$, $\widehat{K}$ denotes the associated dyadic tents; see \cite{SW} for the detials. Stockdale and Wagner \cite[Theorem 1.3]{SW} established the following result. Recall that the Berezin transform $\widetilde{T}_{\varphi}$ is defined by
$$\widetilde{T}_{\varphi}(z):=\langle T_\varphi k_z,k_z\rangle_{\A^2},$$
where $k_z=K_z/\|K_z\|_{\A^2}$ is the normalized reproducing kernel for $\A^2$.

\begin{theory}[\cite{SW}]\label{rh}{\it
Let $\varphi\in L^{\infty}(\bb_n,dv)$, $1<p<\infty$, $r>1$ and $\sigma\in B_p\cap\text{RH}_r$ with $\sigma^{1-p'}\in\text{RH}_r$. Then $T_\varphi$ acts compactly on $\A^p_{\sigma}$ if and only if $\widetilde{T}_{\varphi}(z)\to0$ as $|z|\to1^-$.}
\end{theory}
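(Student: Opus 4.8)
The plan is to transplant the weakly-localized-operator machinery of Section~\ref{wcpt} from the Fock space to the weighted Bergman space $\A^p_\sigma$, replacing Euclidean balls by Bergman-metric balls $D(z,r)$ and the Gaussian off-diagonal decay of the Fock kernels by the off-diagonal decay of the Bergman kernel $K_z(w)=(1-\langle w,z\rangle)^{-(n+1)}$ on $\bb_n$. Writing $\sigma'=\sigma^{1-p'}$, I would first record the Bergman analogues of the preliminary lemmas of Section~\ref{pre}: the reproducing identity $f(z)=\langle f,K_z\rangle_{\A^2}$ on $\A^p_\sigma$ (analogue of Lemma~\ref{equal}), a two-sided estimate for $\|K_z\|_{\A^p_\sigma}$ in terms of $\|K_z\|_{\A^2}$ and the $\sigma$-measure of the Carleson tent at $z$ (analogue of Lemma~\ref{test}), and the duality $(\A^p_\sigma)^*\cong\A^{p'}_{\sigma'}$ under the unweighted pairing $\langle\cdot,\cdot\rangle_{\A^2}$. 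The reverse H\"older hypotheses $\sigma,\sigma'\in\mathrm{RH}_r$ enter precisely here: together with $\sigma\in B_p$ they supply the doubling and the weighted kernel-norm estimates that make these lemmas available.

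For the direction \emph{compact $\Rightarrow$ Berezin vanishes}, I would argue exactly as in the final paragraph of the proof of Theorem~\ref{main1}. The normalized kernels $k_z$ are bounded in $\A^p_\sigma$ and converge to $0$ uniformly on compact subsets of $\bb_n$ as $|z|\to1^-$, hence $k_z\to0$ weakly in $\A^p_\sigma$; writing $\widetilde{T}_\varphi(z)=\langle T_\varphi k_z,k_z\rangle_{\A^2}$, passing to the $(p,\sigma)$-normalized kernels via the analogue of \eqref{prod}, and using H\"older's inequality together with compactness of $T_\varphi$, one gets $|\widetilde{T}_\varphi(z)|\lesssim\|T_\varphi k_z^{(p,\sigma)}\|_{\A^p_\sigma}\to0$. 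This direction is routine and uses the reverse H\"older conditions only through the basic duality.

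The substantive direction is \emph{Berezin vanishes $\Rightarrow$ compact}, which I would obtain in three steps following Section~\ref{wcpt}. First, define weakly localized operators on $\A^p_\sigma$ through the Bergman metric and prove, as in Proposition~\ref{wl}, that every $T_\varphi$ with $\varphi\in L^\infty(\bb_n,dv)$ is weakly localized, bounding $|\langle T_\varphi k_z^{(p,\sigma)},k_u^{(p',\sigma')}\rangle_{\A^2}|$ by $\|\varphi\|_{L^\infty}$ times a weighted kernel ratio times the integrable off-diagonal decay of the Bergman kernel. Second, establish a Riesz--Kolmogorov type criterion (the analogues of Lemma~\ref{RKtype} and Corollary~\ref{T-cpt}) characterizing precompactness of a bounded set $\mathcal{S}\subset\A^p_\sigma$ by the uniform vanishing of the boundary tails $\int_{\{|z|>1-\delta\}}|f|^p\sigma\,dv$; this I would derive from a continuous-frame decomposition $f=\int_{\bb_n}\langle f,k_u^{(p',\sigma')}\rangle_{\A^2}f_u\,d\lambda(u)$, the norm equivalence $\|f\|_{\A^p_\sigma}^p\asymp\int_{\bb_n}|\langle f,k_u^{(p',\sigma')}\rangle_{\A^2}|^p\,d\lambda(u)$, and the frame compactness theorem \cite[Theorem~1.4]{MSWW}. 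Third, combine the vanishing Berezin transform with weak localization exactly as in the proof of Theorem~\ref{main1}: a ball-version of \cite[Proposition~1.5]{Is15} turns $\widetilde{T}_\varphi(z)\to0$ into uniform decay of $\sup_{u\in D(z,r_0)}|\langle T_\varphi k_z,k_u\rangle_{\A^2}|$, handling the near-diagonal part, while weak localization controls the off-diagonal tail.

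I expect the main obstacle to lie in steps one and two in the \emph{weighted} ball geometry. Unlike the Fock setting, the Bergman metric is not translation invariant and the weight $\sigma$ interacts with non-isometric kernel ``dilations,'' so the weighted kernel ratios that replace $(\widehat{w}(u)/\widehat{w}(z))^{1/p}$ in Proposition~\ref{wl} must be controlled by comparing $\sigma$-averages over Bergman balls at different scales near $\partial\bb_n$; it is here that $\sigma,\sigma'\in\mathrm{RH}_r$ are needed, to guarantee both the doubling and the two-sided kernel-norm estimates. The technical crux is verifying that the frame norm equivalence survives in the weighted form above under $\sigma\in B_p\cap\mathrm{RH}_r$ with $\sigma'\in\mathrm{RH}_r$; relaxing these $\mathrm{RH}_r$ hypotheses to bare $\sigma\in B_p$ is a genuinely harder problem, and is presumably what the paper's own Theorem~\ref{main4} achieves by a more delicate argument.
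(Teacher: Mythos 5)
The first thing to say is that the paper does not prove this statement at all: Theorem~\ref{rh} is quoted verbatim from Stockdale and Wagner \cite[Theorem 1.3]{SW} and is used purely as a black box. The paper's own contribution in Section~\ref{Bergman} is Theorem~\ref{main4}, which \emph{removes} the reverse H\"older hypotheses from Theorem~\ref{rh}, and it does so not by reproving the compactness characterization from scratch but by a weight-replacement trick: it passes from $\sigma$ to the regularized weight $\hat{\sigma}$, shows $\A^p_{\sigma}=\A^p_{\hat{\sigma}}$ with equivalent norms (Lemma~\ref{bnormeq}), shows $\hat{\sigma}\in B_p$ (Lemma~\ref{hat}) and that $\hat{\sigma},\hat{\sigma}^{1-p'}$ automatically satisfy a reverse H\"older condition, and then invokes Theorem~\ref{rh} for $\hat{\sigma}$. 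So your proposal is attacking a different (and harder) task than anything the paper actually carries out; there is no internal proof to compare it against.

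Taken on its own terms, your plan is the right general shape --- it is essentially the Section~\ref{wcpt} argument (weak localization, continuous frames, the Riesz--Kolmogorov criterion of \cite{MSWW}, and an Isralowitz-type near-diagonal estimate) transplanted to the ball, which is also the lineage of the actual proof in \cite{SW}. But as written it is a plan, not a proof, and the places you defer are exactly where the content is. Two points deserve to be called out as genuine gaps rather than routine verifications. First, the analogue of the estimate in Proposition~\ref{wl}: in the Fock case the weight-ratio factor $C^{|u-z|}$ is harmlessly absorbed by the Gaussian decay $e^{-\frac{\alpha}{2}|z-u|^2}$, whereas on $\bb_n$ the ratio $\bigl(\hat{\sigma}(u)/\hat{\sigma}(z)\bigr)^{1/p}$ can grow exponentially in $\beta(z,u)$ at a rate depending on the weight, while the Bergman kernel product decays only at the fixed rate governed by $n+1$. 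Making the product integrable over $\bb_n$ (and making its tail vanish) requires a quantitative self-improvement of the $B_p$ and $\mathrm{RH}_r$ conditions, not just their qualitative statement; this is precisely why the hypotheses of Theorem~\ref{rh} include $\sigma,\sigma^{1-p'}\in\mathrm{RH}_r$, and your proposal does not show how they deliver the needed exponent gap. Second, the two-sided norm estimate for $\|K_z\|_{\A^p_{\sigma}}$ and the frame norm equivalence $\|f\|_{\A^p_{\sigma}}^p\asymp\int_{\bb_n}|\langle f,k_u^{(p',\sigma')}\rangle_{\A^2}|^p\,d\lambda(u)$ are asserted by analogy with Lemmas~\ref{test} and~\ref{norm-eq}, but their Bergman-space proofs are not formal transpositions: the upper bound for $\|K_z\|_{\A^p_{\sigma}}$ requires summing $\sigma$-masses over dilated Carleson tents, which again uses the reverse H\"older/$A_\infty$ structure in an essential way. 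Until these two points are supplied, the proposal should be regarded as an accurate roadmap of the known proof strategy rather than a proof.
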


The main result of this section is as follows, which removes the reverse H\"{o}lder condition in Theorem \ref{rh} and answers \cite[Open Question 1.14]{SW}.

\begin{theorem}\label{main4}
Let $\varphi\in L^{\infty}(\bb_n,dv)$, $1<p<\infty$ and $\sigma\in B_p$. Then $T_{\varphi}$ acts compactly on $\A^p_{\sigma}$ if and only if $\widetilde{T}_{\varphi}(z)\to0$ as $|z|\to1^-$.
\end{theorem}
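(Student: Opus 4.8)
The plan is to transport the weakly-localized-operator machinery of Section~\ref{wcpt} to the weighted Bergman setting, with the B\'{e}koll\`{e}--Bonami condition $\sigma\in B_p$ playing the role of $A^{\res}_p$. Since $T_\varphi$ is a single operator, the algebra property of Proposition~\ref{alg} is not needed: it suffices to show that $T_\varphi$ is weakly localized and to combine this with a Riesz--Kolmogorov compactness criterion. The necessity direction is the routine one. The $\A^2$-normalized kernels $k_z=K_z/\|K_z\|_{\A^2}$ and the $(p,\sigma)$-normalized kernels $k^{(p,\sigma)}_z$ are bounded families in $\A^p_\sigma$ tending to $0$ uniformly on compact subsets of $\bb_n$ as $|z|\to1^-$, hence both tend to $0$ weakly in $\A^p_\sigma$. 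Writing $\widetilde{T}_\varphi(z)=\langle T_\varphi k_z,k_z\rangle_{\A^2}$ and distributing the diagonal normalization through the unweighted pairing with the product relation below, one gets $|\widetilde{T}_\varphi(z)|\asymp|\langle T_\varphi k^{(p,\sigma)}_z,k^{(p',\sigma')}_z\rangle_{\A^2}|\le\|T_\varphi k^{(p,\sigma)}_z\|_{\A^p_\sigma}$, so compactness of $T_\varphi$ forces $\widetilde{T}_\varphi(z)\to0$.

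First I would assemble the $B_p$-analogues of the preliminary results of Section~\ref{pre}. Writing $\ct_z$ for the Carleson tent over $z$, $\sigma'=\sigma^{-p'/p}$, and $d\lambda(w)=(1-|w|^2)^{-(n+1)}dv(w)$ for the invariant measure, the ingredients are: the doubling of $\sigma$ on tents; the kernel-norm estimate $\|K_z\|_{\A^p_\sigma}\asymp\|K_z\|^2_{\A^2}\,\sigma(\ct_z)^{1/p}$, with the dual version for $\sigma'$; the reproducing identity $f(z)=\langle f,K_z\rangle_{\A^2}$ for $f\in\A^p_\sigma$; the duality $(\A^p_\sigma)^*\cong\A^{p'}_{\sigma'}$ under the unweighted pairing; and, combining the kernel estimates with the $B_p$-balance $\sigma(\ct_z)^{1/p}\sigma'(\ct_z)^{1/p'}\asymp v(\ct_z)$, the product relation $\|K_z\|_{\A^p_\sigma}\|K_z\|_{\A^{p'}_{\sigma'}}\asymp\|K_z\|^2_{\A^2}$ used above. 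These are the exact counterparts of Lemmas~\ref{equal}--\ref{norm-eq} and follow from the B\'{e}koll\`{e}--Bonami theory.

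For sufficiency I would follow the three-step scheme behind Theorem~\ref{main1}. Step one: define weakly localized operators on $\A^p_\sigma$ by replacing the Euclidean balls $B_r(z)$ with Bergman-metric balls and $dv$ with $d\lambda$, and prove the analogue of Proposition~\ref{wl}, that every $T_\varphi$ with $\varphi\in L^\infty$ is weakly localized; the governing off-diagonal factor is $|\langle k_z,k_w\rangle_{\A^2}|=\big((1-|z|^2)(1-|w|^2)/|1-\langle z,w\rangle|^2\big)^{(n+1)/2}$, which decays like $e^{-(n+1)\beta(z,w)}$ in the Bergman distance $\beta$ and plays the role of the Gaussian factor $e^{-\frac{\alpha}{2}|z-w|^2}$. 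Step two: establish a Riesz--Kolmogorov criterion for precompactness in $\A^p_\sigma$ by exhibiting the analysis vectors $\{k^{(p',\sigma')}_u\}$ together with appropriate synthesis vectors as a continuous frame for $\A^p_\sigma$ with respect to $L^p(\bb_n,d\lambda)$, so that \cite[Theorem~1.4]{MSWW} yields the $\A^p_\sigma$-version of Corollary~\ref{T-cpt}; here the norm identity $\|f\|^p_{\A^p_\sigma}\asymp\int_{\bb_n}|\langle f,k^{(p',\sigma')}_u\rangle_{\A^2}|^pd\lambda(u)$ is the Bergman analogue of \eqref{frame2} and rests on the tent-averaging $\A^p_\sigma=\A^p_{\widehat\sigma}$. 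Step three: prove the sufficiency lemma analogous to Lemma~\ref{cpt-suff}, and feed in the hypothesis after converting $\widetilde{T}_\varphi(z)\to0$, via a Bergman analogue of \cite[Proposition~1.5]{Is15}, into the uniform near-diagonal decay $\sup_{\beta(w,z)<r}|\langle T_\varphi k_z,k_w\rangle_{\A^2}|\to0$ as $|z|\to1^-$. Splitting the integral over $\bb_n$ into a near-diagonal part, controlled by this decay, and an off-diagonal tail, controlled by weak localization, produces the vanishing hypothesis of the sufficiency lemma, whence $T_\varphi$ is compact.

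The main obstacle will be the weak-localization estimate for $T_\varphi$ carried out \emph{without} the reverse H\"{o}lder hypothesis of Theorem~\ref{rh}. After reducing $|\langle T_\varphi k^{(p,\sigma)}_z,k^{(p',\sigma')}_w\rangle_{\A^2}|$ to an integral of $|\langle k_z,k_\xi\rangle_{\A^2}|\,|\langle k_\xi,k_w\rangle_{\A^2}|$ against $d\lambda$, one must absorb the weight ratio $(\sigma(\ct_w)/\sigma(\ct_z))^{1/p}$ into a factor growing sub-exponentially in $\beta(z,w)$, so that it is beaten by the kernel decay; this is exactly where tent-doubling from $\sigma\in B_p$ replaces the stronger pointwise control that the reverse H\"{o}lder class supplied in \cite{SW}. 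Once the weight ratio is tamed on a lattice adapted to the Bergman metric---the analogue of Lemma~\ref{ele}(2) and estimate~\eqref{ele1}---the Schur-type integral bounds and the tail estimates go through as in the proofs of Proposition~\ref{wl} and Theorem~\ref{main1}.
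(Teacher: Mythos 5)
Your outline is not the paper's argument, and it contains a genuine gap at exactly the point you flag as ``the main obstacle.'' The weak-localization step for $T_\varphi$ on $\A^p_\sigma$ requires the weight ratio $(\sigma(\ct_w)/\sigma(\ct_z))^{1/p}$ to be dominated by the off-diagonal kernel decay. In the Fock setting this works for \emph{any} doubling constant because the Gaussian factor $e^{-\frac{\alpha}{2}|z-u|^2}$ beats every exponential $C^{|z-u|}$; that is the whole point of the estimate \eqref{ele1} feeding into \eqref{zw}. In the Bergman setting the kernel decay is only $|\langle k_z,k_w\rangle_{\A^2}|\asymp e^{-(n+1)\beta(z,w)}$, an exponential with a \emph{fixed} rate, while tent-doubling of a general $B_p$ weight only gives $\sigma(\ct_w)/\sigma(\ct_z)\lesssim C^{\beta(z,w)}$ with a base $C$ that can be arbitrarily large (already for the standard weights $(1-|z|^2)^t$ with $t$ near $p-1$ the ratio grows at a rate exceeding $n+1$ for $p$ near $1$). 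So there is no ``sub-exponential'' absorption available from $B_p$ alone, and the Schur-type bounds you want do not go through. This is precisely why Stockdale--Wagner needed the reverse H\"older hypotheses in Theorem~\ref{rh} and why they posed the removal of those hypotheses as an open question; asserting that ``tent-doubling replaces the stronger pointwise control'' is assuming the hard part.

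The paper takes a completely different and much shorter route that sidesteps weak localization on $\bb_n$ altogether: it regularizes the weight by averaging over Bergman-metric balls, setting $\hat{\sigma}(z)=\sigma(D(z,1))/v(D(z,1))$. Lemma~\ref{hat} shows $\hat{\sigma}\in B_p$ with $[\hat{\sigma}]_{B_p}\lesssim[\sigma]_{B_p}$; Lemma~\ref{bnormeq}, using only $\sigma\in C_p$, shows $\A^p_{\sigma}=\A^p_{\hat{\sigma}}$ with equivalent norms, so compactness of $T_\varphi$ on $\A^p_\sigma$ is equivalent to compactness on $\A^p_{\hat{\sigma}}$. Since $\hat{\sigma}$ satisfies the pointwise comparability \eqref{tilde} on Bergman balls, \cite[Theorem 2.9]{SW} yields $\hat{\sigma},\hat{\sigma}^{1-p'}\in\mathrm{RH}_r$ for some $r>1$, and Theorem~\ref{rh} applied to $\hat{\sigma}$ finishes the proof. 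If you want to rescue your program, you would need to carry out your steps for the regularized weight $\hat{\sigma}$ (where \eqref{tilde} supplies the missing pointwise control), at which point you would essentially be reproving the Stockdale--Wagner theorem rather than reducing to it.
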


To prove the above theorem, we need some preliminary results. Recall that the Bergman metric $\beta(\cdot,\cdot)$ on $\bb_n$ is defined by
$$\beta(z,u):=\frac{1}{2}\log\left(\frac{1+|\phi_z(u)|}{1-|\phi_z(u)|}\right),\quad z,u\in\bb_n,$$
where $\phi_z$ is the involution of $\bb_n$ interchanging $z$ and $0$. Given $z\in\bb_n$ and $\delta>0$, we use $D(z,\delta)$ to denote the Bergman metric ball centered at $z$ with radius $\delta$. For a weight $\sigma$ on $\bb_n$, define
$$\hat{\sigma}(z):=\frac{\sigma(D(z,1))}{v(D(z,1))},\quad z\in\bb_n.$$
The following lemma indicates that $\hat{\sigma}\in B_p$ whenever $\sigma\in B_p$.

\begin{lemma}\label{hat}
Let $1<p<\infty$ and $\sigma\in B_p$. Then $\hat{\sigma}\in B_p$, and
$$[\hat{\sigma}]_{B_p}\lesssim[\sigma]_{B_p},$$
where the implicit constant depends only on $n$.
\end{lemma}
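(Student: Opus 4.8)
The plan is to reduce the $B_p$-characteristic of the averaged weight $\hat{\sigma}$ to that of $\sigma$ over \emph{comparable} Carleson tents, using a Fubini interchange for the numerator factor and Jensen's inequality for the dual factor. Throughout I will use the standard nonisotropic geometry of $\bb_n$: there is a constant depending only on $n$ such that for $u\in D(z,1)$ one has $1-|u|^2\asymp 1-|z|^2$, $|1-\langle u,\zeta\rangle|\asymp|1-\langle z,\zeta\rangle|$ for all $\zeta\in\bb_n$, and $v(D(z,1))\asymp v(D(u,1))\asymp(1-|z|^2)^{n+1}$; I will also use the symmetry $u\in D(z,1)\iff z\in D(u,1)$.

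The first key step is a geometric comparison of tents. For $a\neq 0$ write $\ct_a=\{z:|1-\langle z,a/|a|\rangle|<1-|a|\}$, so that $\ct_a$ is governed by the boundary point $\zeta=a/|a|$ and the scale $\delta=1-|a|$, with $v(\ct_a)\asymp\delta^{n+1}$. Using $|1-\langle u,\zeta\rangle|\asymp|1-\langle z,\zeta\rangle|$ for $u\in D(z,1)$, I will show that $\bigcup_{z\in\ct_a}D(z,1)$ is contained in a dilated tent $\{w:|1-\langle w,\zeta\rangle|<C\delta\}$ with $C=C(n)$; when $C\delta<1$ this is again a genuine tent $\ct_{a'}$ with $a'=(1-C\delta)\zeta$ and $v(\ct_{a'})\asymp\delta^{n+1}\asymp v(\ct_a)$, while if $C\delta\geq 1$ then $\delta\asymp 1$ and both $\ct_a$ and the dilated tent are comparable to $\ct_0=\bb_n$. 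In every case I thereby produce a tent $\ct_{a'}$ with $\bigcup_{z\in\ct_a}D(z,1)\subset\ct_{a'}$ and $v(\ct_{a'})\lesssim v(\ct_a)$.

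Next I transfer the averaging through Fubini. For any nonnegative weight $g$, writing $\hat{g}(z)=\int_{\bb_n}v(D(z,1))^{-1}\chi_{D(z,1)}(u)g(u)\,dv(u)$ and interchanging the order of integration gives
$$\int_{\ct_a}\hat{g}\,dv=\int_{\bb_n}g(u)\int_{\ct_a}\frac{\chi_{D(u,1)}(z)}{v(D(z,1))}\,dv(z)\,dv(u)\asymp\int_{\bb_n}g(u)\,\frac{v(\ct_a\cap D(u,1))}{v(D(u,1))}\,dv(u)\lesssim\int_{\ct_{a'}}g\,dv,$$
where I used $\chi_{D(z,1)}(u)=\chi_{D(u,1)}(z)$, the comparison $v(D(z,1))\asymp v(D(u,1))$ for $z\in D(u,1)$, and the fact that the inner ratio is at most $1$ and vanishes unless $u\in\bigcup_{z\in\ct_a}D(z,1)\subset\ct_{a'}$. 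Taking $g=\sigma$ controls the first factor. For the dual factor I first invoke Jensen's inequality: since $t\mapsto t^{-p'/p}$ is convex on $(0,\infty)$, one has $\hat{\sigma}(z)^{-p'/p}\leq\widehat{\sigma^{-p'/p}}(z)$, and the Fubini transfer applied to $g=\sigma^{-p'/p}$ then yields $\int_{\ct_a}\hat{\sigma}^{-p'/p}\,dv\lesssim\int_{\ct_{a'}}\sigma^{-p'/p}\,dv$.

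Finally I combine the two estimates, using $v(\ct_{a'})\asymp v(\ct_a)$, to obtain
$$\left(\frac{1}{v(\ct_a)}\int_{\ct_a}\hat{\sigma}\,dv\right)\left(\frac{1}{v(\ct_a)}\int_{\ct_a}\hat{\sigma}^{-p'/p}\,dv\right)^{p/p'}\lesssim\left(\frac{1}{v(\ct_{a'})}\int_{\ct_{a'}}\sigma\,dv\right)\left(\frac{1}{v(\ct_{a'})}\int_{\ct_{a'}}\sigma^{-p'/p}\,dv\right)^{p/p'}\leq[\sigma]_{B_p},$$
and taking the supremum over $a\in\bb_n$ (the case $a=0$ handled directly via $D(z,1)\subset\bb_n$) gives $[\hat{\sigma}]_{B_p}\lesssim[\sigma]_{B_p}$ with constant depending only on $n$. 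I expect the main obstacle to be the geometric step: making the tent dilation and the volume comparison precise, in particular handling the borderline regime where the dilation factor would push the scale past $1$, for which one falls back on comparability with the full ball $\ct_0=\bb_n$.
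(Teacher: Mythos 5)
Your proposal is correct and follows essentially the same route as the paper: a geometric containment $\bigcup_{z\in\ct_a}D(z,1)\subset\ct_{a'}$ with $v(\ct_{a'})\asymp v(\ct_a)$ (the paper takes $a'=(1-20(1-|a|))a/|a|$ and proves the inclusion via the triangle inequality for $|1-\langle\cdot,\cdot\rangle|^{1/2}$, treating the regime $|a|<19/20$ by falling back on $\ct_0=\bb_n$ exactly as you do), followed by Fubini for the numerator factor and Jensen/H\"older for the dual factor. No substantive differences.
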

\begin{proof}
Recall that for $a\in\bb_n$, $v(\mathcal{T}_a)\asymp(1-|a|^2)^{n+1}$. Fix $a\in\bb_n$. If $|a|<\frac{19}{20}$, then by Fubini's theorem,
$$\frac{1}{v(\ct_a)}\int_{\ct_a}\hat{\sigma}(z)dv(z)
\lesssim\int_{\bb_n}\frac{\sigma(D(z,1))}{(1-|z|^2)^{n+1}}dv(z)\asymp\int_{\bb_n}\sigma dv,$$
and similarly, by H\"{o}lder's inequality,
$$\frac{1}{v(\ct_a)}\int_{\ct_a}\hat{\sigma}(z)^{-\frac{p'}{p}}dv(z)
\leq\frac{1}{v(\ct_a)}\int_{\ct_a}\frac{\int_{D(z,1)}\sigma^{-\frac{p'}{p}}dv}{v(D(z,1))}dv(z)
  \lesssim\int_{\bb_n}\sigma^{-\frac{p'}{p}}dv.$$
Consequently,
$$\left(\frac{1}{v(\ct_a)}\int_{\ct_a}\hat{\sigma}dv\right)
    \left(\frac{1}{v(\ct_a)}\int_{\ct_a}\hat{\sigma}^{-\frac{p'}{p}}dv\right)^{\frac{p}{p'}}
\lesssim\left(\int_{\bb_n}\sigma dv\right)\left(\int_{\bb_n}\sigma^{-\frac{p'}{p}}dv\right)^{\frac{p}{p'}}\lesssim[\sigma]_{B_p}.$$
Suppose now that $|a|>\frac{19}{20}$ and write $\tilde{a}=(1-20(1-|a|))\frac{a}{|a|}$. Then $v(\ct_a)\asymp v(\ct_{\tilde{a}})$, and we claim that
\begin{equation}\label{contain}
\bigcup_{z\in\ct_a}D(z,1)\subset\ct_{\tilde{a}}.
\end{equation}
In fact, for any $z\in\ct_a$ and $u\in D(z,1)$, we have
\begin{align*}
\left|1-\left\langle u,\tilde{a}/|\tilde{a}|\right\rangle\right|^{1/2}
&\leq|1-\langle u,z\rangle|^{1/2}+\left|1-\left\langle z,a/|a|\right\rangle\right|^{1/2}\\
&\leq|1-\langle\phi_z(\phi_z(u)),\phi_z(0)\rangle|^{1/2}+(1-|a|)^{1/2}\\
&=\frac{(1-|z|^2)^{1/2}}{|1-\langle\phi_z(u),z\rangle|^{1/2}}+(1-|a|)^{1/2}\\
&\leq\frac{\sqrt{2}(1-|a|)^{1/2}}{(1-|\phi_z(u)|)^{1/2}}+(1-|a|)^{1/2}\\
&<\left(\sqrt{\frac{2}{1-\tanh1}}+1\right)(1-|a|)^{1/2}.
\end{align*}
Note that $\tanh1\approx0.76<4/5$, we obtain that
$$\left|1-\left\langle u,\tilde{a}/|\tilde{a}|\right\rangle\right|<(\sqrt{10}+1)^2(1-|a|)<20(1-|a|)=1-|\tilde{a}|,$$
which gives that $u\in\ct_{\tilde{a}}$. Hence \eqref{contain} holds. Then by Fubini's theorem and \eqref{contain},
$$\frac{1}{v(\ct_a)}\int_{\ct_a}\hat{\sigma}(z)dv(z)
\asymp\frac{1}{v(\ct_a)}\int_{\ct_a}\frac{\sigma(D(z,1))}{(1-|z|^2)^{n+1}}dv(z)
\lesssim\frac{1}{v(\ct_{\tilde{a}})}\int_{\ct_{\tilde{a}}}\sigma dv$$
and
$$\frac{1}{v(\ct_a)}\int_{\ct_a}\hat{\sigma}(z)^{-\frac{p'}{p}}dv(z)
\leq\frac{1}{v(\ct_a)}\int_{\ct_a}\frac{\int_{D(z,1)}\sigma^{-\frac{p'}{p}}dv}{v(D(z,1))}dv(z)
\lesssim\frac{1}{v(\ct_{\tilde{a}})}\int_{\ct_{\tilde{a}}}\sigma^{-\frac{p'}{p}}dv.$$
Therefore,
\begin{align*}
\left(\frac{1}{v(\ct_a)}\int_{\ct_a}\hat{\sigma}dv\right)
    &\left(\frac{1}{v(\ct_a)}\int_{\ct_a}\hat{\sigma}^{-\frac{p'}{p}}dv\right)^{\frac{p}{p'}}\\
&\lesssim\left(\frac{1}{v(\ct_{\tilde{a}})}\int_{\ct_{\tilde{a}}}\sigma dv\right)
\left(\frac{1}{v(\ct_{\tilde{a}})}\int_{\ct_{\tilde{a}}}\sigma^{-\frac{p'}{p}}dv\right)^{\frac{p}{p'}}\leq[\sigma]_{B_p}.
\end{align*}
Since $a\in\bb_n$ is arbitrary, we conclude that $\hat{\sigma}\in B_p$, and $[\hat{\sigma}]_{B_p}\lesssim[\sigma]_{B_p}$.
\end{proof}

We also need the notion of $C_p$-weights. For $1<p<\infty$, a weight $\sigma$ on $\bb_n$ is said to be in the $C_p$-class if
$$\sup_{a\in\bb_n}\left(\frac{1}{v(D(a,1))}\int_{D(a,1)}\sigma dv\right)
\left(\frac{1}{v(D(a,1))}\int_{D(a,1)}\sigma^{-\frac{p'}{p}}dv\right)^{\frac{p}{p'}}<\infty.$$
It was pointed out in \cite[p. 322]{Lu} that $B_p\subset C_p$. Moreover, by \cite[Corollary 3.8]{Lu}, if $\sigma\in C_p$ for some $1<p<\infty$, then for all $R>0$,
\begin{equation}\label{tilde}
\hat{\sigma}(z)\asymp\hat{\sigma}(u)\quad \mathrm{whenever} \quad \beta(z,u)<R.
\end{equation}
A direct consequence of this fact is that $\hat{\sigma}\in C_p$. The following lemma gives an equivalent norm for the weighted Bergman spaces $\A^p_{\sigma}$ with $\sigma\in C_p$.

\begin{lemma}\label{bnormeq}
Let $1<p<\infty$ and $\sigma\in C_p$. Then $\A^p_{\sigma}=\A^p_{\hat{\sigma}}$ with equivalent norms.
\end{lemma}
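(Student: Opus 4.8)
The plan is to prove the two-sided estimate
$$\int_{\bb_n}|f|^p\sigma\,dv\asymp\int_{\bb_n}|f|^p\hat{\sigma}\,dv$$
for every $f\in\mathcal{H}(\bb_n)$, with constants depending only on $n,p$ and the $C_p$ characteristic of $\sigma$. Since both inequalities will hold for an arbitrary holomorphic $f$ (with integrals allowed to take the value $+\infty$), this identifies $\A^p_{\sigma}=\A^p_{\hat{\sigma}}$ with equivalent norms, with no density argument needed. Throughout I will invoke two standard facts about the Bergman metric: first, $v(D(z,1))\asymp(1-|z|^2)^{n+1}$, so that $v(D(z,1))\asymp v(D(w,1))$ whenever $\beta(z,w)<1$; and second, the submean-value inequality for holomorphic functions, $|f(z)|^q\lesssim v(D(z,1))^{-1}\int_{D(z,1)}|f|^q\,dv$ for $q\in\{1,p\}$.

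For the inequality $\int|f|^p\sigma\lesssim\int|f|^p\hat{\sigma}$ I would apply the submean-value inequality to $|f|^p$ and then use Fubini. Writing $|f(w)|^p\lesssim v(D(w,1))^{-1}\int_{D(w,1)}|f(z)|^p\,dv(z)$ and integrating against $\sigma(w)\,dv(w)$, the symmetry $w\in D(z,1)\iff z\in D(w,1)$ of the Bergman metric permits interchanging the order of integration, producing $\int_{\bb_n}|f(z)|^p\big(\int_{D(z,1)}v(D(w,1))^{-1}\sigma(w)\,dv(w)\big)\,dv(z)$. By volume comparability the inner factor is $\asymp v(D(z,1))^{-1}\int_{D(z,1)}\sigma\,dv=\hat{\sigma}(z)$, giving the claim. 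This direction uses only subharmonicity and the geometry of the metric, not the $C_p$ hypothesis.

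The reverse inequality $\int|f|^p\hat{\sigma}\lesssim\int|f|^p\sigma$ is where $C_p$ enters, and it is the main point. Here I would first derive a pointwise bound. Applying the submean-value inequality to $|f|$ and then Hölder's inequality with exponents $p,p'$, splitting the integrand as $|f|\,\sigma^{1/p}\cdot\sigma^{-1/p}$, yields
$$|f(z)|\lesssim v(D(z,1))^{-1}\Big(\int_{D(z,1)}|f|^p\sigma\,dv\Big)^{1/p}\Big(\int_{D(z,1)}\sigma^{-p'/p}\,dv\Big)^{1/p'}.$$
Raising to the power $p$ and multiplying by $\hat{\sigma}(z)=v(D(z,1))^{-1}\int_{D(z,1)}\sigma\,dv$, the three $\sigma$-factors regroup exactly into the $C_p$ average of $\sigma$ over $D(z,1)$ times $v(D(z,1))^{-1}\int_{D(z,1)}|f|^p\sigma\,dv$; the powers of $v(D(z,1))$ cancel precisely because $1+p/p'=p$, and the $C_p$ hypothesis bounds that average by a constant. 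This produces the pointwise estimate $|f(z)|^p\hat{\sigma}(z)\lesssim v(D(z,1))^{-1}\int_{D(z,1)}|f|^p\sigma\,dv$, after which integrating in $z$ and applying Fubini together with the volume comparability (now as $\int_{D(w,1)}v(D(z,1))^{-1}\,dv(z)\asymp1$) delivers the desired bound.

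I expect the only delicate step to be the Hölder-plus-$C_p$ computation in the reverse direction: one must split the exponent so that the factor $\big(\int_{D(z,1)}\sigma^{-p'/p}\,dv\big)^{1/p'}$ arises with the power needed to pair with $\int_{D(z,1)}\sigma\,dv$ into the $C_p$ quantity, and must track the powers of $v(D(z,1))$ carefully, the identity $1+p/p'=p$ being what forces them to cancel. The remaining ingredients, namely the submean-value inequalities and the volume comparability of unit Bergman balls, are standard. I note that neither direction in fact requires \eqref{tilde} or the membership $\hat{\sigma}\in C_p$, only $\sigma\in C_p$ itself.
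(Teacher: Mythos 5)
Your proof is correct, but it takes a genuinely different route from the paper's. The paper proves only the embedding $\A^p_{\hat{\sigma}}\subset\A^p_{\sigma}$ in detail (declaring the other ``similar''), and does so by covering $\bb_n$ with a $\delta$-lattice of Bergman balls with finite overlap, invoking Luecking's weighted submean-value inequality \cite[Lemma 3.1]{Lu} in the form $|f(z)|^p\lesssim\hat{\sigma}(z)^{-1}(1-|z|^2)^{-(n+1)}\int_{D(z,\delta)}|f|^p\hat{\sigma}\,dv$, and then using the comparability \eqref{tilde} of $\hat{\sigma}$ on Bergman balls to sum the local pieces. You instead work with pointwise estimates over $D(z,1)$ obtained from the plain (unweighted) submean-value property, and dispose of both directions by Fubini and the volume comparability $v(D(z,1))\asymp v(D(w,1))$ for $\beta(z,w)<1$; I checked the exponent bookkeeping in your reverse direction ($1+p/p'=p$ cancels the powers of $v(D(z,1))$ exactly, leaving $|f(z)|^p\hat{\sigma}(z)\lesssim[\sigma]_{C_p}\,v(D(z,1))^{-1}\int_{D(z,1)}|f|^p\sigma\,dv$) and it is sound. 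What your approach buys is self-containedness and transparency: it avoids the lattice/finite-overlap machinery and the auxiliary fact \eqref{tilde}, it handles both inclusions explicitly, and it isolates precisely where the $C_p$ hypothesis is used --- only in the embedding $\A^p_{\sigma}\subset\A^p_{\hat{\sigma}}$, the other direction being true for every weight. What the paper's approach buys is brevity by outsourcing the analytic core to Luecking's lemma, which is the standard reference point in this circle of ideas. Either argument is acceptable; yours could replace the paper's without loss.
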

\begin{proof}
We only prove the bounded embedding $\A^p_{\hat{\sigma}}\subset \A^p_{\sigma}$. The other is similar. Suppose that $f\in \A^p_{\hat{\sigma}}$. Let $0<\delta<1$ and $\{a_k\}\subset\bb_n$ be a $\delta$-lattice in the Bergman metric. Then $\bb_n=\bigcup_kD(a_k,\delta)$ and there exists an absolute constant $N$ such that each point $z\in\bb_n$ belongs to at most $N$ of the sets $D(a_k,4\delta)$ (see \cite[Theorem 2.23]{Zhuball}). Consequently, by \cite[Lemma 3.1]{Lu} and \eqref{tilde},
\begin{align*}
\int_{\bb_n}|f(z)|^p\sigma(z)dv(z)
&\leq\sum_{k}\int_{D(a_k,\delta)}|f(z)|^p\sigma(z)dv(z)\\
&\lesssim\sum_{k}\int_{D(a_k,\delta)}\frac{1}{\hat{\sigma}(z)(1-|z|^2)^{n+1}}\int_{D(z,\delta)}|f|^p\hat{\sigma}dv\sigma(z)dv(z)\\
&\leq\sum_k\int_{D(a_k,\delta)}\frac{\sigma(z)dv(z)}{\hat{\sigma}(z)(1-|z|^2)^{n+1}}\int_{D(a_k,2\delta)}|f|^p\hat{\sigma}dv\\
&\lesssim\sum_k\int_{D(a_k,2\delta)}|f|^p\hat{\sigma}dv\\
&\lesssim\|f\|^p_{\A^p_{\hat{\sigma}}}.
\end{align*}
Hence $f\in \A^p_{\sigma}$ and $\|f\|_{\A^p_{\sigma}}\lesssim\|f\|_{\A^p_{\hat{\sigma}}}$, which finishes the proof.
\end{proof}

We are now ready to prove Theorem \ref{main4}.

\begin{proof}[Proof of Theorem \ref{main4}]
Since $\sigma\in B_p\subset C_p$, it follows from Lemma \ref{bnormeq} that $T_{\varphi}$ is compact on $\A^p_{\sigma}$ if and only if it is compact on $\A^p_{\hat{\sigma}}$. Therefore, in view of Theorem \ref{rh}, it suffices to verify that there exists $r>1$ such that $\hat{\sigma}\in B_p\cap\mathrm{RH}_r$ and $\hat{\sigma}^{1-p'}\in\mathrm{RH}_r$. By Lemma \ref{hat}, $\hat{\sigma}\in B_p$, and consequently, $\hat{\sigma}^{1-p'}\in B_{p'}$, which, combined with \eqref{tilde} and \cite[Theorem 2.9]{SW}, shows that $\hat{\sigma},\hat{\sigma}^{1-p'}\in\mathrm{RH}_r$ for some $r>1$. The proof is complete.
\end{proof}

\medskip

%\noindent{\bf Acknowledgment.}

%\noindent{\bf Statements and Declarations}

%\noindent{\bf Competing interest.} There are no competing interests.


\begin{thebibliography}{99}
	
\bibitem{APR} A. Aleman, S. Pott and M. C. Reguera,
\newblock{Sarason conjecture on the Bergman space,}
\newblock Int. Math. Res. Not. IMRN 2017, no. 14, 4320--4349.
	
\bibitem{AZ} S. Axler and D. Zheng,
\newblock{Compact operators via the Berezin transform,}
\newblock Indiana Univ. Math. J. 47 (1998), no. 2, 387--400.
	
%\bibitem{Ba} W. Bauer,
%\newblock{Berezin--Toeplitz quantization and composition formulas,}
%\newblock J. Funct. Anal. 256 (2009), no. 10, 3107--3142.

\bibitem{BCK} W. Bauer, B. R. Choe and H. Koo,
\newblock{Commuting Toeplitz operators with pluriharmonic symbols on the Fock space,}
\newblock J. Funct. Anal. 268 (2015), no. 10, 3017--3060.
	
\bibitem{BCI} W. Bauer, L. A. Coburn and J. Isralowitz,
\newblock{Heat flow, BMO, and the compactness of Toeplitz operators,}
\newblock J. Funct. Anal. 259 (2010), no. 1, 57--78.

\bibitem{BI} W. Bauer and J. Isralowitz,
\newblock{Compactness characterization of operators in the Toeplitz algebra of the Fock space $F^p_{\alpha}$,}
\newblock J. Funct. Anal. 263 (2012), no. 5, 1323--1355.

\bibitem{BvSW} W. Bauer, L. van Luijk, A. Stottmeister and R. F. Werner,
\newblock{Self-adjointness of Toeplitz operators on the Segal--Bargmann space,}
\newblock J. Funct. Anal. 284 (2023), no. 4, Paper No. 109778, 24 pp.

\bibitem{Be} D. B\'{e}koll\`{e},
\newblock{In\'{e}galit\'{e} \`{a} poids pour le projecteur de Bergman dans la boule unit\'{e} de $\mathbf{C}^n$,}
\newblock Studia Math. 71 (1981/82), no. 3, 305--323.

\bibitem{BB} D. B\'{e}koll\`{e} and A. Bonami,
\newblock{In\'{e}galit\'{e}s \`{a} poids pour le noyau de Bergman,}
\newblock C. R. Acad. Sci. Paris S\'{e}r. A-B 286 (1978), no. 18, A775--A778.

\bibitem{BEY} H. Bommier-Hato, M. Engli\v{s} and E.-H. Youssfi,
\newblock{Bergman-type projections in generalized Fock spaces,}
\newblock J. Math. Anal. Appl. 389 (2012), no. 2, 1086--1104.

\bibitem{Br} G. Brocchi,
\newblock{Refined two weight estimates for the Bergman projection,}
\newblock Collect. Math. 76 (2025), no. 1, 65--80.

\bibitem{CFP} C. Cascante, J. F\`{a}brega and J. \'{A}. Pel\'{a}ez,
\newblock{Littlewood--Paley formulas and Carleson measures for weighted Fock spaces induced by $A_{\infty}$-type weights,}
\newblock Potential Anal. 50 (2019), no. 2, 221--244.

\bibitem{Ch24} J. Chen,
\newblock{Composition operators on weighted Fock spaces induced by $A_{\infty}$-type weights,}
\newblock Ann. Funct. Anal. 15 (2024), no. 2, Paper No. 22, 24 pp.

\bibitem{CHW} J. Chen, B. He and M. Wang,
\newblock{Absolutely summing Carleson embeddings on weighted Fock spaces with $A_{\infty}$-type weights,}
\newblock J. Operator Theory, in press.

\bibitem{CW24} J. Chen and M. Wang,
\newblock{Weighted norm inequalities, embedding theorems and integration operators on vector-valued Fock spaces,}
\newblock Math. Z. 307 (2024), no. 2, Paper No. 36, 30 pp.

\bibitem{CW24-1} J. Chen and M. Wang,
\newblock{Fock projections on vector-valued $L^p$-spaces with matrix weights,}
\newblock preprint, 2024. arXiv:2408.13537

\bibitem{CIL} L. A. Coburn, J. Isralowitz and B. Li,
\newblock{Toeplitz operators with BMO symbols on the Segal--Bargmann space,}
\newblock Trans. Amer. Math. Soc. 363 (2011), no. 6, 3015--3030.

%\bibitem{CS} \v{Z}. \v{C}u\v{c}kovi\'{c} and S. \c{S}ahuto\u{g}lu,
%\newblock{Berezin regularity of domains in $\mathbb{C}^n$ and the essential norms of Toeplitz operators,}
%\newblock Trans. Amer. Math. Soc. 374 (2021), no. 4, 2521--2540.

%\bibitem{CSZ} \v{Z}. \v{C}u\v{c}kovi\'{c}, S. \c{S}ahuto\u{g}lu and Y. E. Zeytuncu,
%\newblock{A local weighted Axler-Zheng theorem in $\mathbb{C}^n$,}
%\newblock Pacific J. Math. 294 (2018), no. 1, 89--106.

\bibitem{En} M. Engli\v{s},
\newblock{Compact Toeplitz operators via the Berezin transform on bounded symmetric domains,}
\newblock Integral Equations Operator Theory 33 (1999), no. 4, 426--455.

\bibitem{FW} X. Fang and Z. Wang,
\newblock{Two weight inequalities for the Bergman projection with doubling measures,}
\newblock Taiwanese J. Math. 19 (2015), no. 3, 919--926.

\bibitem{Fu} R. Fulsche,
\newblock{Essential positivity for Toeplitz operators on the Fock space,}
\newblock Integral Equations Operator Theory 96 (2024), no. 3, Paper No. 21, 10 pp.

\bibitem{Fu2} R. Fulsche,
\newblock{Toeplitz operators on non-reflexive Fock spaces,}
\newblock Rev. Mat. Iberoam. 40 (2024), no. 3, 1115--1148.

\bibitem{HL} Z. Hu and X. Lv,
\newblock{Toeplitz operators from one Fock space to another,}
\newblock Integral Equations Operator Theory 70 (2011), no. 4, 541--559.

\bibitem{HLW} Z. Hu, X. Lv and B. D. Wick,
\newblock{Localization and compactness of operators on Fock spaces,}
\newblock J. Math. Anal. Appl. 461 (2018), no. 2, 1711--1732.

\bibitem{Is14} J. Isralowitz,
\newblock{Invertible Toeplitz products, weighted norm inequalities, and $A_p$ weights,}
\newblock J. Operator Theory 71 (2014), no. 2, 381--410.

\bibitem{Is15} J. Isralowitz,
\newblock{Compactness and essential norm properties of operators on generalized Fock spaces,}
\newblock J. Operator Theory 73 (2015), no. 2, 281--314.

\bibitem{IMW} J. Isralowitz, M. Mitkovski and B. D. Wick,
\newblock{Localization and compactness in Bergman and Fock spaces,}
\newblock Indiana Univ. Math. J. 64 (2015), no. 5, 1553--1573.

\bibitem{IZ} J. Isralowitz and K. Zhu,
\newblock{Toeplitz operators on the Fock space,}
\newblock Integral Equations Operator Theory 66 (2010), no. 4, 593--611.

\bibitem{JPR} S. Janson, J. Peetre and R. Rochberg,
\newblock{Hankel forms and the Fock space,}
\newblock Rev. Mat. Iberoamericana 3 (1987), no. 1, 61--138.

\bibitem{JZ} M. Jovovic and D. Zheng,
\newblock{Compact operators and Toeplitz algebras on multiply-connected domains,}
\newblock J. Funct. Anal. 261 (2011), no. 1, 25--50.

\bibitem{La14} M. T. Lacey,
\newblock{Two-weight inequality for the Hilbert transform: a real variable characterization II,}
\newblock Duke Math. J. 163 (2014), no. 15, 2821--2840.

\bibitem{LSSU} M. T. Lacey, E. T. Sawyer, C.-Y. Shen and I. Uriarte-Tuero,
\newblock{Two-weight inequality for the Hilbert transform: a real variable characterization I,}
\newblock Duke Math. J. 163 (2014), no. 15, 2795--2820.

\bibitem{LSU} M. T. Lacey, E. T. Sawyer and I. Uriarte-Tuero,
\newblock{A two weight inequality for the Hilbert transform assuming an energy hypothesis,}
\newblock J. Funct. Anal. 263 (2012), no. 2, 305--363.

\bibitem{Lu} D. H. Luecking,
\newblock{Representation and duality in weighted spaces of analytic functions,}
\newblock Indiana Univ. Math. J. 34 (1985), no. 2, 319--336.

%\bibitem{Me} T. Mengestie,
%\newblock{On Toeplitz operators between Fock spaces,}
%\newblock Integral Equations Operator Theory 78 (2014), no. 2, 213--224.

\bibitem{MSWW} M. Mitkovski, C. B. Stockdale, N. A. Wagner and B. D. Wick,
\newblock{Riesz--Kolmogorov type compactness criteria in function spaces with applications,}
\newblock Complex Anal. Oper. Theory 17 (2023), no. 3, Paper No. 40, 31 pp.

\bibitem{MSW} M. Mitkovski, D. Su\'{a}rez and B. D. Wick,
\newblock{The essential norm of operators on $A^p_{\alpha}(\mathbb{B}_n)$,}
\newblock Integral Equations Operator Theory 75 (2013), no. 2, 197--233.

\bibitem{Sa82} E. T. Sawyer,
\newblock{A characterization of a two-weight norm inequality for maximal operators,}
\newblock Studia Math. 75 (1982), no. 1, 1--11.

\bibitem{Sa88} E. T. Sawyer,
\newblock{A characterization of two weight norm inequalities for fractional and Poisson integrals,}
\newblock Trans. Amer. Math. Soc. 308 (1988), no. 2, 533--545.

\bibitem{SW} C. B. Stockdale and N. A. Wagner,
\newblock{Weighted theory of Toeplitz operators on the Bergman space,}
\newblock Math. Z. 305 (2023), no. 1, Paper No. 10, 29 pp.

\bibitem{Su} D. Su\'{a}rez,
\newblock{The essential norm of operators in the Toeplitz algebra on $A^p(\mathbb{B}_n)$,}
\newblock Indiana Univ. Math. J. 56 (2007), no. 5, 2185--2232.

\bibitem{WX} Y. Wang and J. Xia,
\newblock{Essential commutants on strongly pseudo-convex domains,}
\newblock J. Funct. Anal. 280 (2021), no. 1, Paper No. 108775, 56 pp.

%\bibitem{WZ} Z. Wang and X. Zhao,
%\newblock{Invertibility of Fock Toeplitz operators with positive symbols,}
%\newblock J. Math. Anal. Appl. 435 (2016), no. 2, 1335--1351.

\bibitem{XZ} J. Xia and D. Zheng,
\newblock{Localization and Berezin transform on the Fock space,}
\newblock J. Funct. Anal. 264 (2013), no. 1, 97--117.

\bibitem{Zhuball} K. Zhu,
\newblock{Spaces of holomorphic functions in the unit ball,}
\newblock Graduate Texts in Mathematics, 226. Springer-Verlag, New York, 2005.

\bibitem{Zh} K. Zhu,
\newblock{Analysis on Fock spaces,}
\newblock Graduate Texts in Mathematics, 263. Springer, New York, 2012.






\end{thebibliography}
\end{document}